\def\l{\langle} \def\r{\rangle} 
\def\FF{\mathbb F} \def\ZZ{\mathbb Z}
\def\mod{{\sf mod~}}  
 \def\diag{{\sf diag}}
 \def\soc{{\sf soc}}
\def\Cay{{\sf Cay}} 
\def\D{{\rm D}} \def\Q{{\rm Q}}
\def\C{{\bf C}}\def\N{{\bf N}}\def\Z{{\bf Z}} 
\def\Ome{{\it \Omega}}
\def\Del{{\it\Delta}}
\def\PSp{{\rm PSp}}
\def\A{{\rm A}}\def\Sym{{\rm Sym}}
\def\PSL{{\rm PSL}}\def\PGL{{\rm PGL}}
 \def\SL{{\rm SL}}
\def\AGL{{\rm AGL}}
 \def\PSU{{\rm PSU}} 
\def\Sz{{\rm Sz}}  
  \def\D{{\rm D}}
  \def\Sym{{\rm Sym}}
\def\C{{\bf C}}  
\def\D{\mathrm{D}} \def\Q{\mathrm{Q}} \def\A{\mathrm{A}} \def\Sy{\mathrm{S}}      \def\Sz{\mathrm{Sz}}
   \def\SL{\mathrm{SL}}     \def\PGL{\mathrm{PGL}} \def\PSL{\mathrm{PSL}} \def\PSU{\mathrm{PSU}} \def\PSp{\mathrm{PSp}}  \def\PGU{\mathrm{PGU}}     \def\AGL{\mathrm{AGL}}
\def\o{\omega}
\def\s{\sigma}
\def\d{\delta}
\newcommand{\bb}{\mathbb}
\def\le{\leqslant}
\def\ge{\geqslant}
\newtheorem{theorem}{Theorem}[section]
\newtheorem{definition}{Definition}[section]
\newtheorem{proposition}[theorem]{Proposition}
\newtheorem{lemma}[theorem]{Lemma}
\newtheorem{corollary}[theorem]{Corollary}
\newtheorem{example}[theorem]{Example}
\newtheorem{conjecture}[theorem]{Conjecture}
\newtheorem{problem}[theorem]{Problem}
\newtheorem{construction}[theorem]{Construction}
\newtheorem*{theorem*}{Theorem}
\def\qed{{\hfill$\Box$\bigskip}
}
\begin{document}

	\title{intersecting subsets in finite permutation groups}
	\thanks{This work was partially supported  by NSFC grants 11931005 and 61771019}
	
	\author[Li]{C. H. Li}
	\address{SUSTech International Center for Mathematics\\
		Department of Mathematics\\
		Southern University of Science and Technology\\
		Shenzhen, Guangdong 518055\\
		P. R. China}
	\email{lich@sustech.edu.cn}
	
	\author[Pantangi]{V. R. T. Pantangi}
	\address{Department of Mathematics\\
		University of Lethbridge \\
		Lethbridge \\
		Alberta TIK3M4, Canada}
	\email{pvrt1990@gmail.com}
	
	\author[Song]{S. J. Song}
	\address{School of Mathematics\\
		Yantai University\\
		Yantai, Shandong}
	\email{shujiao.song@hotmail.com}
	
	\author[Xie]{Y. L. Xie}
	\address{Department of Mathematics\\
		Southern University of Science and Technology\\
		Shenzhen, Guangdong 518055\\
		P. R. China}
	\email{11930521@mail.sustech.edu.cn}

	\begin{abstract}
			Let $G\le\Sym(\Ome)$ be transitive, and let $S$ be an {\it intersecting subset}, namely, the ratio $xy^{-1}$ of any elements $x,y\in S$ fixes some point.
		An {\it EKR-type problem} is to characterize transitive groups $G\le\Sym(\Ome)$ such that any intersecting set is upper bounded by $|G_\o|$, where $\o\in\Omega$.
			A nice result of Meagher-Spiga-Tiep (2016) tells us that if $G$ is 2-transitive, then indeed $|S|\leqslant|G_\o|$.
			A natural next step would be to explore intersecting subsets for primitive groups and quasiprimitive groups.
			Our study in this paper shows that for quasiprimitive permutation groups, the size $|S|$ can be arbitrarily larger than $|G_\o|$. We conjecture that for quasiprimitve groups,  the upperbound for $|S|$ is $O(|G_\o||\Omega|^{1\over2})$.
			As a starting point, we prove that ${|S|/(|G_\o||\Omega|^{1\over2}})\leqslant{\sqrt2/2}$ for all quasiprimitive actions of the Suzuki groups $G=\Sz(q)$.  To show that our conjectured upper bound is tight, we provide  examples of groups for which ${|S|/(|G_\o||\Omega|^{1\over2}})$ is arbitrarily close to ${\sqrt2/2}$. As far as general transitive groups concerned, infinity families of examples produced show that the ratio ${|S|/(|G_\o||\Omega|^{1\over2}})$ can be arbitrarily large.
	\end{abstract}
	
	\keywords{EKR property, semiregular subsets, intersecting subsets}
	
	\date{}
	
	\maketitle
	
	\section{Introduction}
	Let $G$ be a finite transitive permutation group on $\Ome$.
	A pair of permutations $x,y \in G$ is said to {\it intersect} if
	\[\{(\o,\o^{x})| \ \o \in \Ome\}\cap \{(\o,\o^{y})| \ \o \in \Ome\}\not=\emptyset,\]
	in other words, the ratio $xy^{-1}$ fixes some point in $\Omega$.
	
	\begin{definition}
		{\rm
			A subset $S$ of a transitive permutation group $G$ is said to be an {\it intersecting subset} if any pair of elements in $S$ intersect.
		}
	\end{definition}
	
	Clearly, a point stabilizer $G_\o$ and its cosets are all intersecting subsets.
	An EKR-type problem in permutation group theory is to study the cardinality and the structure of intersecting subsets in transitive permutation groups, where `EKR' standards for Erd\H{o}s-Ko-Rado, refer to \cite{GM2009}.
	An intersecting subset is said to be {\it maximum} if it is an intersecting subset of the maximum possible size.
	
	A transitive permutation group $G$ on $\Ome$ has the {\it EKR property} if its point stabilizers are maximum intersecting subsets.
	There have been many papers published in literature that study groups with EKR property, refer to \cite{AM2014, CK2003,DF1977,GM2009,LM2004,LPSX2018,MS2011,MST2016} and references therein.
	In particular, it was shown in \cite{MST2016} that all $2$-transitive permutation groups have the EKR property.
	A recent result \cite{MSi2019} shows that the characteristic vector of any maximum intersecting subset of $2$-transitive groups is a linear combination of characteristic vectors of cosets of point stabilizers.
	However, the results in \cite{Raghu} show that a permutation group in general does not have the property.
	
	In this paper, we explore intersecting subsets of permutation groups in more general case.
	It is not hard to show that an intersecting subset $S$ of a transitive group $G\le\Sym(\Omega)$ can be much larger than $G_\o$,
	for instance, in a transitive permutation representation of $G=\PSL(2,2^e)$ of degree $|G|/2$, each Sylow 2-subgroup is an intersecting subset.
	This shows that not only a transitive permutation group is in general ``far from satisfying the EKR property'', but also the ratio $|S|/|G_\o|$ can be arbitrarily large.
	
	%
	%
	We note that $|S|<|G|=|G_\o||\Ome|$. Anecdotal evidence indicates that $O\left(|G_\o|\sqrt{|\Ome|}\right)$ is a reasonable estimate the size of an intersecting subset $S$ in primitive groups and quasiprimitive groups, (see Theorem~\ref{qp-EKR} and Lemma~\ref{lem:simp} for example). This conjectured upper bound leads us to define the following invariant of a permutation group.
	
	\begin{definition}\label{ratio}
		{\rm
			Given a transitive permutation group $G\le\Sym(\Ome)$, define
			\[\begin{array}{rcl}
				\rho(G/\Ome)&=&\mbox{${|S|\over|G_\o|\sqrt{|\Ome|}}$,}\\
			\end{array}\]
			where $S$ is a maximum intersecting subset.
		}
	\end{definition}
	
	In the previous unpublished version \cite{LSP}, the notation $\rho(G,\Ome)$ was defined as the maximum value of the ratio ${|S|/|G_\o|}$, which is called \textit{intersection density} and written as $\rho(G)$ in \cite{MRS}.
	So $\rho(G/\Ome)=\rho(G,\Ome)/\sqrt{|\Ome|}$.
	Since $G_{\o}$ is an intersecting subset, we have that $\rho(G/\Ome)\geqslant {1\over\sqrt{|\Ome|}}$, and the equality holds if and only if $G$ satisfies the EKR property.

	A subgroup $R$ of $G\le\Sym(\Ome)$ is {\em semiregular} if the identity is the only element of $R$ which fixes a point of $\Ome$, and then a {\em regular} subgroup is semiregular and transitive.
	These can be extended to subsets.
	
	\begin{definition}\label{def:semireg-sset}
		{\rm
			Let $R$ be a subset of $G$ which contains the identity.
			Then $R$ is called a {\em semiregular subset} if, for any two elements $r,s$, the ratio $rs^{-1}$ fixes no point of $\Ome$.
			A semiregular subset is called a {\it regular subset} if it has size $|\Omega|.$
		}
	\end{definition}
	
	The concept of semiregular subset is important in the study of intersecting subsets, since `semiregular subsets' and `intersecting subsets' are complements to each other.
	
	\begin{proposition}\label{thm:semireg}
		Let $G\le\Sym(\Ome)$ be transitive.
		Let $R$ be a semiregular subset, and $S$ a maximum intersecting subset.
		Then 
		\begin{enumerate}[{\rm(1)}]
			\item $|R||S|\le{|G|}$, and $G=RS$ if $|R||S|=|G|$;
			\item $\rho(G/\Omega)\le \sqrt{|\Omega|}/|R|$;
			\item in particular, if $R$ is a regular subset then $G$ has the EKR property.
		\end{enumerate}
	\end{proposition}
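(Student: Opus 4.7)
The plan is to establish part (1) by showing the product map $R\times S\to G$, $(r,s)\mapsto rs$, is injective, and then derive parts (2) and (3) as immediate algebraic consequences.

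First I would record a small fact: for distinct $r_1,r_2\in R$, the element $r_2^{-1}r_1$ fixes no point of $\Omega$. Indeed, by the semiregularity of $R$ the ratio $r_1r_2^{-1}\ne 1$ fixes no point, and $r_2^{-1}r_1 = r_2^{-1}(r_1r_2^{-1})r_2$ is conjugate to $r_1r_2^{-1}$, so it has the same number of fixed points (namely zero). This is the bridge that lets the semiregular hypothesis on $R$ interact with the intersecting hypothesis on $S$.

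For the injectivity, suppose $r_1s_1=r_2s_2$ with $r_i\in R$ and $s_i\in S$. Rearranging gives $r_2^{-1}r_1 = s_2s_1^{-1}$. The right-hand side fixes a point of $\Omega$ because $S$ is intersecting (and if $s_1=s_2$ this is trivial). If $r_1\ne r_2$, the left-hand side fixes no point by the observation above, a contradiction. Hence $r_1=r_2$, and cancellation forces $s_1=s_2$. This proves $|R||S|=|RS|\le|G|$, with equality if and only if $RS=G$, giving (1).

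For (2), divide the inequality $|S|\le|G|/|R|=|G_\omega||\Omega|/|R|$ by $|G_\omega|\sqrt{|\Omega|}$ to obtain $\rho(G/\Omega)=|S|/(|G_\omega|\sqrt{|\Omega|})\le\sqrt{|\Omega|}/|R|$. For (3), if $R$ is regular then $|R|=|\Omega|$, so $|S|\le|G_\omega|$; but the stabilizer $G_\omega$ is itself an intersecting subset, forcing $|S|=|G_\omega|$, which is the EKR property. No step looks like a substantial obstacle; the only subtlety is the conjugation argument used to transfer "no fixed point" from $r_1r_2^{-1}$ to $r_2^{-1}r_1$, so I would make sure to spell that out.
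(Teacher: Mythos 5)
Your proof is correct, and it takes a cleaner route than the paper's. You prove part (1) by showing the multiplication map $R\times S\to G$ is injective: from $r_1s_1=r_2s_2$ you get $r_2^{-1}r_1=s_2s_1^{-1}$, the right side fixes a point because $S$ is intersecting, and the left side fixes no point when $r_1\ne r_2$ because it is conjugate to $r_1r_2^{-1}$ (your conjugation step is the right thing to spell out, since the definition of semiregularity is stated for $rs^{-1}$ rather than $r^{-1}s$). The paper instead builds a transversal of $G_\omega$ in $G$ out of the $R$-"orbits" $\Delta_1,\dots,\Delta_m$ on $\Omega$, decomposes $S$ into pieces $S_{ij}=S\cap(G_\omega y_ir_j)$, translates them into $G_\omega$, and shows the translates with the same $i$ are pairwise disjoint, giving $|S|\le m|G_\omega|=|G|/|R|$. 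The two arguments hinge on the identical core computation ($r_j^{-1}r_{j'}$ cannot simultaneously fix no point and equal a ratio of elements of $S$), but your version is the standard clique--coclique argument and buys two things: it yields the equality statement $|R||S|=|G|\Rightarrow G=RS$ immediately (the paper's proof does not explicitly address this), and it avoids the assumption that the sets $\delta^R$ partition $\Omega$ into blocks of size $|R|$ --- a point that is automatic for semiregular subgroups but genuinely delicate for semiregular subsets that are not subgroups, where these sets can overlap and $|R|$ need not divide $|\Omega|$. Parts (2) and (3) are the same immediate consequences in both treatments.
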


	We remark that \cite[Corollary~2.2]{AM2015} states that if $G$ has a regular subset, then $G$ has the EKR property, and Proposition~\ref{thm:semireg} is a generalisation in some sense. 
	
	A semiregular subset $R$ provides an upper bound for intersecting subsets $|S|\le{|G|\over|R|}$, and if the equality holds, then $S$ is a maximum intersecting subset.
	This motivates us to propose the following `factorization' problem.
	
	\begin{problem}\label{pro:semireg}
		{\rm
			Characterise transitive permutation groups $G\le\Sym(\Ome)$ with a semiregular subset $R$ and an intersecting subset $S$ such that $G=RS$.
		}
	\end{problem}
	
	Next, we consider intersecting subsets in primitive groups and quasiprimitive groups.
	The well-known O'Nan-Scott-Praeger Theorem divides quasiprimitive permutation groups into eight classes, see \cite{DMPbook}.
	The following theorem shows that the EKR property is indeed satisfied for six out of the eight types of quasiprimitive groups; however, a bit surprisingly, the EKR property is ``far'' from being satisfied for some primitive actions of the other two types. 
	
	\begin{theorem}\label{qp-EKR}
		Let $G$ be a quasiprimitive permutation group on $\Ome$.
		Then either
		\begin{itemize}
			\item[(i)] $G$ has a regular subgroup and the EKR property, or
			\item[(ii)] $G$ is an almost simple group or a group of product action type.
		\end{itemize}
		Moreover, the following hold in case {\rm(ii)}.
		\begin{enumerate}[{\rm(a)}]
			\item There exists infinitely many primitive simple groups $G$ such that $G=RS$ for a  semiregular subgroup $R$ and a maximum intersecting subset $S$, and  ${\sqrt2\over2}-\varepsilon<\rho(G/\Ome)<{\sqrt2\over2}$ for any $\varepsilon>0$.
			
			\item There exists infinitely many primitive groups $G$ in product action on $\Omega=\Delta^\ell$ such that $G=RS$ where $R$ is a  semiregular subgroup and $S$ is a maximum intersecting subset, and $\left({\sqrt2\over2}\right)^\ell-\varepsilon<\rho(G/\Ome)<\left({\sqrt2\over2}\right)^\ell$ for any $\varepsilon>0$.
		\end{enumerate}
	\end{theorem}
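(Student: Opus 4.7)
The proof naturally splits into two parts: a dichotomy derived from the O'Nan--Scott--Praeger classification, and explicit constructions establishing the tightness of the bounds $(\sqrt{2}/2)^\ell$.

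For the dichotomy I would invoke the eight-class decomposition of quasiprimitive groups into types $\mathrm{HA}$, $\mathrm{HS}$, $\mathrm{HC}$, $\mathrm{TW}$, $\mathrm{SD}$, $\mathrm{CD}$, $\mathrm{AS}$, $\mathrm{PA}$, and exhibit an explicit regular subgroup in six of them: the elementary abelian socle in $\mathrm{HA}$; one of the regular copies of the simple socle factor in $\mathrm{HS}$ and $\mathrm{HC}$; the base group $T^k$ in $\mathrm{TW}$; the subgroup $T^{k-1}$ (first $k-1$ direct factors) sitting inside the socle $T^k$ with diagonal point stabiliser in $\mathrm{SD}$, which has the correct order $|T|^{k-1}=|\Ome|$ and meets the diagonal trivially; and the analogous product-of-diagonals subgroup in $\mathrm{CD}$. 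In each of these six cases Proposition~\ref{thm:semireg}(3) forces the EKR property, placing $G$ in case (i). Only the types $\mathrm{AS}$ and $\mathrm{PA}$ can fall outside this argument, producing case (ii).

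For (ii)(a), the natural family is $G=\Sz(q)$ with $q=2^{2m+1}$ and $r^2=2q$, acting primitively on a coset space $G/H$ for a carefully chosen maximal subgroup $H$. The plan is to exhibit a semiregular subgroup $R\le G$ of odd order coprime to $|H|$---such as a cyclic subgroup of order $q\pm r+1$ or a suitable Frobenius-type extension---together with a complementing intersecting subset $S$ of size $|G|/|R|$. Proposition~\ref{thm:semireg}(1)--(2) then gives simultaneously the strict inequality $\rho(G/\Ome)\le\sqrt{|\Ome|}/|R|$ and, via the factorisation $G=RS$, the matching equality. A direct computation will show $\sqrt{|\Ome|}/|R|\to\sqrt2/2$ from below as $q\to\infty$, while the uniform upper bound $\rho(G/\Ome)<\sqrt2/2$ for every $q$ is exactly the Suzuki lemma advertised after Definition~\ref{ratio}.

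For (ii)(b) I would lift (a) via product action: with $(G_0,\Delta)$ as in (a), take $G=G_0\wr\Sym(\ell)$ acting on $\Ome=\Delta^\ell$. Then $R:=R_0^\ell$ is semiregular coordinate-by-coordinate on $\Ome$, and $S:=S_0^\ell\cdot\Sym(\ell)$ is intersecting because a common fixed point in each coordinate (supplied by $S_0$) combines into a common fixed tuple in $\Delta^\ell$. A cardinality check yields $G=RS$, whence
\[
\rho(G/\Ome)\;\le\;\tfrac{\sqrt{|\Ome|}}{|R|}\;=\;\Bigl(\tfrac{\sqrt{|\Delta|}}{|R_0|}\Bigr)^{\!\ell}\;\longrightarrow\;\Bigl(\tfrac{\sqrt2}{2}\Bigr)^{\!\ell},
\]
and the matching lower bound is inherited from (a). The main obstacle is producing the maximum intersecting set in part (a): the semiregular subgroup $R$ delivers only the upper bound $|S|\le|G|/|R|$, and closing this gap requires both the explicit subgroup structure of $\Sz(q)$ (to exhibit a concrete $S$ with $G=RS$) and the Suzuki upper bound on $\rho$ for \emph{all} quasiprimitive $\Sz(q)$-actions (to confirm that no strictly larger intersecting set exists).
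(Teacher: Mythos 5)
Your dichotomy argument and your part~(b) architecture coincide with the paper's: the eight O'Nan--Scott--Praeger types split exactly as you describe (the regular subgroups you name for types $\mathrm{HA}$, $\mathrm{HS}$, $\mathrm{HC}$, $\mathrm{TW}$, $\mathrm{SD}$, $\mathrm{CD}$ are correct, and Proposition~\ref{thm:semireg}(3) then gives case~(i)), and part~(b) is obtained by the same wreath-product lift. One caveat there: your justification that $S=S_0^\ell\cdot\Sym(\ell)$ is intersecting only covers ratios lying in the base group; for a ratio $(s_1,\dots,s_\ell)\sigma$ with $\sigma\neq 1$ you need the cycle argument of Lemma~\ref{prod-lemma} (conjugate so that all but one entry in each cycle of $\sigma$ is trivial, then use that the product of the entries around a cycle lies in the subgroup $S_0$ and hence fixes a point of $\Delta$); ``a common fixed point in each coordinate'' is not available in that case.

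The genuine gap is in part~(a): the Suzuki family cannot deliver $\rho(G/\Ome)\to\sqrt2/2$. To get $\sqrt{|\Ome|}/|R|\to\sqrt2/2$ you need a semiregular subgroup with $|R|\approx\sqrt{2|\Ome|}$. For $G=\Sz(q)$ the odd-order subgroups are cyclic of order at most $q+r+1=O(q)$, while for every primitive action other than the $2$-transitive one (where the EKR property holds and $\rho\to 0$) one has $\sqrt{2|\Ome|}\gg q$: e.g.\ for $H=\D_{2(q-1)}$ one needs $|R|\approx q\sqrt{q^2+1}$, and for $H=\ZZ_{q\pm r+1}{:}\ZZ_4$ one needs $|R|\approx q^2/\sqrt2$; no $2$-subgroup can serve as $R$ since all involutions and all order-$4$ elements of $\Sz(q)$ are conjugate into these stabilizers. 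Consistently, the bounds actually proved in Lemmas~\ref{lem:d2q-1}--\ref{q-r+1} cap $\rho(G/\Ome)$ for Suzuki actions at roughly $1/2$ or below, never approaching $\sqrt2/2$; Theorem~\ref{thm:Sz-EKR} is an upper bound only and says nothing about attainment. The paper instead takes $T=\PSL(2,2^e)$ acting on $\Delta=[T{:}\D_{2(2^e-1)}]$ (Lemma~\ref{AS-pri-EKR}): here $R\cong\ZZ_{2^e+1}$ is semiregular by coprimality, $S=\AGL(1,2^e)$ is intersecting because every element of $S$ has order $2$ or dividing $2^e-1$ and cyclic subgroups of equal order in $\PSL(2,2^e)$ are conjugate, and $|R||S|=|T|$ forces $T=RS$ with $S$ maximum and $\rho(T/\Delta)=\sqrt{2^{e-1}/(2^e+1)}\nearrow\sqrt2/2$. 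You would need to replace your part~(a) family by this (or a comparable) one; since part~(b) inherits its lower bound from part~(a), that substitution is needed there as well.
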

	
	This theorem and some other evidence motivates us to propose the following conjecture.
	
	\begin{conjecture}\label{problem-1}
		{\rm
			There is a constant $C$ such that $\rho(G/\Ome)\le C$ for finite quasiprimitive permutation groups $G\le\Sym(\Ome)$.
			Is it true that $\lim_{|G|\to\infty}\rho(G/\Ome)\le{\sqrt2\over2}$?
		}
	\end{conjecture}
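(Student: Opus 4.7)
The plan is to attack Conjecture~\ref{problem-1} via the O'Nan--Scott--Praeger stratification of quasiprimitive permutation groups into eight types. Theorem~\ref{qp-EKR} already does most of the reduction: in six of the eight types, $G$ has a regular subgroup, and Proposition~\ref{thm:semireg}(3) then delivers the EKR property, whence $\rho(G/\Omega)=1/\sqrt{|\Omega|}\to 0$. It thus suffices to bound $\rho(G/\Omega)$ uniformly, and by $\sqrt{2}/2+o(1)$, for the almost simple (AS) and product action (PA) types.

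The PA case can be reduced to AS by a ``tensor'' construction on semiregular subsets. Suppose $G\le H\wr\Sym(\ell)$ is primitive on $\Omega=\Delta^\ell$ with socle $T^\ell$, and $R_0\subseteq T$ is a semiregular subset (containing the identity) for the $T$-action on $\Delta$. Then the Cartesian power $R_0^{\,\ell}\subset T^\ell\le G$ is semiregular on $\Omega$ of size $|R_0|^\ell$, because if $(r_1,\dots,r_\ell)(s_1,\dots,s_\ell)^{-1}$ fixes $(\delta_1,\dots,\delta_\ell)$ then each $r_is_i^{-1}$ fixes $\delta_i$, forcing $r_i=s_i$. Proposition~\ref{thm:semireg}(2) then gives
\[
\rho(G/\Omega)\ \le\ \frac{\sqrt{|\Omega|}}{|R_0|^\ell}\ =\ \left(\frac{\sqrt{|\Delta|}}{|R_0|}\right)^{\!\ell}.
\]
Once the AS case is settled by producing $R_0$ with $|R_0|\ge \sqrt{2|\Delta|}\,(1-o(1))$, the PA case falls into line with the strictly better bound $(\sqrt{2}/2)^\ell\le 1/2$ for $\ell\ge 2$.

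The main work therefore lies in the AS case, where CFSG is indispensable. For each family of almost simple $G$ with socle $T$ and each core-free $G_\omega$, the target is a semiregular subset $R\subseteq G$ on $\Omega=G/G_\omega$ of size at least $\sqrt{2|\Omega|}(1-o(1))$; Proposition~\ref{thm:semireg}(2) then yields $\rho(G/\Omega)\le \sqrt{2}/2+o(1)$. Natural candidates for $R$ come from cyclic subgroups of maximal tori: for classical $T$ one would use Singer-type subgroups and elements of order $(q^n\pm1)/d$, while for exceptional $T$ one imitates the $\Sz(q)$ argument already carried out in the paper. Alternating and sporadic cases are expected to yield to a mixture of combinatorial set-stabiliser arguments and a finite computer check of the (known) maximal subgroup lattices.

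The hardest step will be producing semiregular subsets of size $\ge\sqrt{2|\Omega|}$ for \emph{every} primitive AS action: this is a quantitative strengthening of the polycirculant conjecture. A single cyclic semiregular subgroup will very likely be too small in actions on cosets of parabolics in classical groups of large rank, where $|\Omega|$ and $|G_\omega|$ are both close to $|G|^{1/2}$ and no large semiregular subgroup is visible. In such cases the semiregular subset must be assembled from elements lying in several different tori, and semiregularity must be verified directly through the ratio condition of Definition~\ref{def:semireg-sset}, without the crutch of a group structure on $R$. The sharpness of the constant $\sqrt{2}/2$ witnessed by Theorem~\ref{qp-EKR}(a) shows that pure subgroup arguments cannot do better than $\sqrt{2}/2$, so such subset-level constructions are almost certainly unavoidable; this is where I expect the bulk of the technical work, and possibly the real conceptual obstacle, to lie.
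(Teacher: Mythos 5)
The statement you are addressing is labelled a \emph{conjecture} in the paper: the authors do not prove it, and there is no ``paper's own proof'' to match yours against. What the paper offers is partial evidence --- the full verification for all transitive (hence all quasiprimitive) actions of $G=\Sz(q)$ in Theorem~\ref{thm:Sz-EKR}, the asymptotically sharp examples of Theorem~\ref{qp-EKR}(a),(b), and scattered cases such as Lemma~\ref{lem:simp} and Examples~\ref{ex:PSL(2,p)-big-R}--\ref{ex:PSL-PSp}. Your text is likewise not a proof but a programme, and you say so yourself; the honest self-assessment at the end is the most accurate part of the proposal.

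The genuine gap is that your reduction replaces the conjecture by a strictly stronger unproven claim: that \emph{every} quasiprimitive almost simple action admits a semiregular subset $R$ with $|R|\ge\sqrt{2|\Ome|}\,(1-o(1))$. Proposition~\ref{thm:semireg}(2) converts such an $R$ into the desired bound, but nothing guarantees that the derangement graph $\Gamma_{G,\Ome}$ contains a clique that large; the clique--coclique inequality $|R||S|\le|G|$ gives no lower bound on the maximum clique. The paper's own treatment of $\Sz(q)$ is instructive here: rather than constructing large semiregular subsets, the authors bound the cocliques directly via the Delsarte--Hoffman bound (Lemma~\ref{lem:hoff}) applied to carefully weighted $(G,\Ome)$-compatible matrices $M^f$, computing eigenvalues from the character table (Lemmas~\ref{eign} and~\ref{e:sumchar}). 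This choice is forced: for several point stabilizers (e.g.\ $G_\o\le\D_{2(q-1)}$ with small $t_0$, or $G_\o$ a small $2$-group) no semiregular subset of size close to $\sqrt{2|\Ome|}$ is exhibited or known, and the ratio bound succeeds where the clique method would stall. Your PA reduction via $R_0^{\ell}$ is sound as far as it goes, but it inherits the same unproven hypothesis on the almost simple component. So the proposal identifies a plausible line of attack but does not close, or even materially narrow, the open problem; if you pursue it, the eigenvalue machinery of Section~\ref{Qp-EKR} is the more promising template than semiregular subsets alone.
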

	
	The following theorem confirms this conjecture for Suzuki groups.	
	
	\begin{theorem}\label{thm:Sz-EKR}
		Let $G=\Sz(q),$ where $q=2^e$ with $e$ odd.
		Then $\rho(G/\Omega)<\frac{\sqrt{2}}{2}$ for each transitive action of $G$ on $\Omega.$
	\end{theorem}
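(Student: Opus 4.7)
By Proposition~\ref{thm:semireg}(2), to show $\rho(G/\Omega) < \sqrt{2}/2$ it suffices, for each transitive action of $G = \Sz(q)$ on $\Omega = G/H$, to exhibit a semiregular subset $R \subseteq G$ with $|R|^2 > 2|G|/|H|$. The plan is a case analysis on $H$ based on Suzuki's classification of subgroups, exploiting the structure of $G$: $|G| = q^2(q-1)(q+r+1)(q-r+1)$ with $r = 2^{(e+1)/2}$ and the four factors pairwise coprime; every non-identity element of $G$ is conjugate into exactly one of the Sylow 2-subgroup $F$ (of order $q^2$) or the maximal cyclic tori $T_0, T_+, T_-$ (of orders $q-1$, $q+r+1$, $q-r+1$); hence each $X \in \{F, T_0, T_+, T_-\}$ is semiregular on $G/H$ iff $\gcd(|X|, |H|) = 1$. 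The maximal subgroups of $G$ are the Borel $B = F \rtimes T_0$, the torus normalisers $\NN_G(T_\epsilon)$ ($\epsilon \in \{0,+,-\}$), and the subfield subgroups $\Sz(q_0)$ when $q = q_0^p$ for $p$ an odd prime.

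Several cases follow by descent to the Suzuki ovoid. When $H \le B$, any intersecting set on $G/H$ descends to one on $G/B$, which is the 2-transitive ovoid action of degree $q^2+1$, so by Meagher--Spiga--Tiep one has $|S| \le |B| = q^2(q-1)$, yielding $\rho(G/\Omega)^2 \le q^2(q-1)/(|H|(q^2+1))$. This is below $1/2$ once $|H|$ exceeds roughly $2q$; for the tail of smaller $H \le B$, at least one of $T_+, T_-$ has order coprime to $|H|$ (since $|H|$ has prime divisors only in $\{2\}\cup\pi(q-1)$), and the inequality $(q\pm r+1)^2|H| > 2|G|$ is checked arithmetically. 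For $H \le \Sz(q_0)$ contained in a subfield subgroup, $|H|$ is small enough (at most $q_0^2(q_0-1)(q_0^2+1)\ll |G|^{1/2}$) that choosing $R = F$ when $|H|$ is odd, or an appropriate $T_\epsilon$ otherwise, verifies $|R|^2|H| > 2|G|$ directly.

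The principal obstacle is handling $H \le \NN_G(T_\epsilon)$, particularly when $H = \NN_G(T_\epsilon)$ itself, where $|H|$ is small (of order $q$ up to a constant) and none of the four natural semiregular subgroups gives a large enough bound. For instance when $H = \NN_G(T_0)$ we have $|H| = 2(q-1)$ and $|\Omega| = q^2(q^2+1)/2$, so the required $|R|$ must exceed $q\sqrt{(q^2+1)/2}$; neither $F$ (which contains involutions lying in $H^G$) nor $T_0$ (which lies in $H$) is semiregular, and $T_\pm$ are of order $\sim q$ and too small. My plan for these subcases is to construct genuinely larger semiregular subsets of size about $q^2+1$: a natural candidate is $R = \bigcup_{i} T_+ g_i$, where $\{g_i\}$ is a system of representatives for the $T_+$-orbits on the Suzuki ovoid, making $R$ a transversal to $B$ in $G$ with $|R| = q^2+1$. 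Since the derangements on $G/\NN_G(T_0)$ are precisely the elements of order 4 and of $T_\pm$-type, semi-regularity of $R$ reduces to verifying that each double coset $T_+ g_i g_j^{-1} T_+$ avoids the involutions and the $T_0$-conjugates, which can be controlled via the Bruhat decomposition $G = B \sqcup BwB$ and the explicit $T_+$-orbit structure on the ovoid; analogous constructions handle $H = \NN_G(T_\pm)$. Once such an $R$ is established, the inequality $(q^2+1)^2 > q^2(q^2+1) = 2|\Omega|$ closes the argument, with finitely many small-$q$ cases verified computationally. As a fallback, one may instead invoke the Hoffman (ratio) bound on the derangement Cayley graph $\Cay(G,D)$ using the known character table of $\Sz(q)$, reducing the problem to bounding a small number of character sums.
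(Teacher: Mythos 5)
Your overall framework (Proposition~\ref{thm:semireg}(2) plus a case division over the subgroups of $\Sz(q)$) is reasonable, and one piece of it is genuinely nice and different from the paper: for $H\le B=F{:}T_0$ the descent to the $2$-transitive ovoid action combined with Meagher--Spiga--Tiep gives $|S|\le q^2(q-1)$ at once, which cleanly disposes of all $H\le B$ with $|H|>2q^2(q-1)/(q^2+1)\approx 2q$. But the proposal has concrete gaps in exactly the branches where the work lies, and in each of them the semiregular-subset bound you are relying on is quantitatively incapable of finishing the job. For the ``tail'' $H\le B$ with $|H|\lesssim 2q$, the inequality $(q\pm r+1)^2|H|>2|G|$ that you claim ``is checked arithmetically'' is false: since $(q+r+1)^2=q^2+O(q^{3/2})<2(q^2+1)$ for large $q$, it fails even when $|H|=|B|=q^2(q-1)$, and for small $|H|$ (e.g.\ $H=\ZZ_2$) one needs a semiregular subset of size about $\sqrt{|G|}\sim q^{5/2}$, while every natural candidate ($F$, $T_0$, $T_\pm$) has order at most $q^2$. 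The subfield case fails for the same reason: with $q=q_0^k$, $k\ge 3$ odd, one has $|T_\epsilon|^2|\Sz(q_0)|\approx q^2q_0^5\ll 2q^5=2|G|$, and $F$ is not semiregular there since $\Sz(q_0)$ contains involutions. Finally, the case $H=\NN_G(T_\epsilon)$ rests entirely on an unconstructed semiregular subset of size $q^2+1$; you give a candidate (a union of $T_+$-cosets transversal to $B$) but no argument that its ratios avoid involutions and $T_0$-conjugates, and nothing in the Bruhat decomposition obviously forces this.

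The paper's actual proof is, in effect, your ``fallback'' carried out in full: for each class of point stabilizers it builds a $(G,\Omega)$-compatible weighted class function $f$, computes the spectrum of $M^f$ from Suzuki's character table (Lemmas~\ref{eign} and~\ref{e:sumchar}), and applies the Delsarte--Hoffman bound, with specially tuned weights in the hard cases $G_\omega=\D_{2(q-1)}$, $G_\omega=\ZZ_{q-1}$, $2$-group stabilizers, subfield subgroups, and $G_\omega\le\ZZ_{q\pm r+1}{:}\ZZ_4$; the case $G_\omega=\ZZ_2$ additionally needs a structural result on intersecting sets generating elementary abelian groups. As written, your proposal defers precisely these computations, so it does not constitute a proof: to complete it you would either have to actually produce the large semiregular cliques (whose existence is unclear) or execute the weighted eigenvalue estimates, which is what the paper does.
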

	
	Here is some more evidence for the conjecture $\rho(G/\Ome)<{\sqrt2\over2}$.
	By Proposition~\ref{thm:semireg}, for a semiregular subset $R$ and an intersecting subset, $\rho(G/\Omega)\leqslant \sqrt{|\Omega|}/|R|$.
	Thus, if $G$ has a `large' semiregular subset $R$, say $|R|>\sqrt{2|\Omega|}$, then $\rho(G/\Omega)<{\sqrt2\over2}$.
	Refer to Lemma~\ref{lem:|R|>sqrt|Omega|} and Examples~\ref{ex:PSL(2,p)-big-R}-\ref{ex:PSL-PSp}.
	
	One would hope that Conjecture~\ref{problem-1} would hold for all transitive permutation groups, see \cite[Conjecture\,1.2]{LSP} of the previous version, which conjectured $\rho(G/\Ome)<1$.
	Unfortunately, it is not true.
	Four sporadic counterexamples with $\rho(G/\Ome)>1$ were first discovered in \cite[Theorem\,5.1]{MRS}, via an exhaustive search through the database of all permutation groups of degree less than 48.
	The final theorem of this paper presents infinity families of examples which shows that $\rho(G/\Ome)$ can be arbitrarily large, which solves Problem 1.4 of \cite{LSP}.
	
	\begin{theorem}\label{no-bound}
		For any given integer $M$, there exist infinitely many transitive permutation groups $G$ on $\Omega$ such that
		$\rho(G/\Ome)>M$.
	\end{theorem}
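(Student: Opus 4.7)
The plan is a direct-product amplification argument: find a single transitive permutation group $(G_0,\Omega_0)$ with $\rho(G_0/\Omega_0)=c>1$, and then use the componentwise action of $G_0^n$ on $\Omega_0^n$ to manufacture, for every $M$, infinitely many groups with $\rho>M$.

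The key lemma is the multiplicativity lower bound
\[
\rho(G_0^n/\Omega_0^n)\ \geq\ \rho(G_0/\Omega_0)^n
\]
for the componentwise action of $G_0^n$ on $\Omega_0^n$. Indeed, if $S_0\subseteq G_0$ is a maximum intersecting subset, then the Cartesian power $S_0^n\subseteq G_0^n$ is again intersecting: for any $(s_1,\ldots,s_n),(t_1,\ldots,t_n)\in S_0^n$ the coordinate ratio $s_it_i^{-1}$ fixes some point $\omega_i\in\Omega_0$ (since $S_0$ is intersecting in $(G_0,\Omega_0)$), so the product ratio fixes $(\omega_1,\ldots,\omega_n)\in\Omega_0^n$. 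Combined with $|S_0^n|=|S_0|^n$, $|(G_0^n)_\omega|=|(G_0)_{\omega_0}|^n$ and $|\Omega_0^n|=|\Omega_0|^n$, this yields the claim.

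For the seed I would invoke the four sporadic examples of degree at most $47$ exhibited by computer search in \cite[Theorem 5.1]{MRS}; any one of them provides a transitive $(G_0,\Omega_0)$ with $\rho(G_0/\Omega_0)=c>1$. Setting $G_n:=G_0^n$ acting componentwise on $\Omega_n:=\Omega_0^n$, the multiplicativity bound gives $\rho(G_n/\Omega_n)\geq c^n\to\infty$. Given $M$, choose $n_M$ with $c^{n_M}>M$; the infinitely many pairwise non-isomorphic groups $G_n$ for $n\geq n_M$ (note $|G_n|=|G_0|^n$ is strictly increasing) all satisfy $\rho(G_n/\Omega_n)>M$.

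The main difficulty lies entirely in step one: the seed must have $\rho>1$. By the Meagher-Spiga-Tiep result every $2$-transitive group has $\rho\leq 1/\sqrt{|\Omega|}<1$, and Theorem~\ref{qp-EKR} together with Conjecture~\ref{problem-1} indicate that quasiprimitive groups also have $\rho<1$ (indeed conjecturally at most ${\sqrt 2/2}$). So the seed must be sought among carefully chosen imprimitive groups; once such a seed exists (by \cite{MRS} or a direct small construction exhibiting an intersecting subgroup strictly larger than $|G_\omega|\sqrt{|\Omega|}$), the amplification is essentially formal.
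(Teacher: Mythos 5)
Your proof is correct, but it proceeds by a genuinely different route from the paper. The paper proves Theorem~\ref{no-bound} by exhibiting two explicit infinite families with unbounded $\rho$: the normalizer $G=\N_T(Q)$ of a Sylow $2$-subgroup $Q$ of $T=\Sz(q)$ acting on the cosets of a $\ZZ_4$ (Construction~\ref{nobo_Suz}, giving $\rho(G/\Omega)=q/(2\sqrt{q-1})$ with $Q$ itself the intersecting set), and an analogous construction inside $\N_T(Q)$ for $T=\PSU_3(q)$ (Construction~\ref{nobo_PSU}); in each case the intersecting set is a full Sylow subgroup whose elements are shown, by a conjugacy analysis, to be conjugate into the point stabilizer. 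Your argument instead amplifies a single seed with $\rho>1$ via the componentwise action of $G_0^n$ on $\Omega_0^n$; the multiplicativity bound $\rho(G_0^n/\Omega_0^n)\ge\rho(G_0/\Omega_0)^n$ is correct as you state it (the Cartesian power of an intersecting set is intersecting, and stabilizers and degrees multiply), and a valid seed is available either from \cite[Theorem 5.1]{MRS} or from the paper's own Table~\ref{larget} together with Lemma~\ref{base-groups} (e.g.\ $3^3{:}\A_4$ of degree $18$ with $\rho=\sqrt2$). What your approach buys is generality and economy: the mere existence of one transitive group with $\rho>1$ formally forces unboundedness, with no further group theory. What the paper's approach buys is explicitness: concrete natural families (not direct powers), identified maximum-type intersecting sets, and exact growth rates; it is also self-contained in the sense of not leaning on a computer-found seed, although the paper's Table~\ref{larget} would serve your purpose equally well.
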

	
	This paper is organized as follows. After this introduction section, we give a group-theoretical proof of Proposition~\ref{thm:semireg}, and apply it to produce some examples of groups with EKR property in Section~\ref{FR}.
	Then we prove Theorem~\ref{qp-EKR} in Section~\ref{large}, and Theorem~\ref{thm:Sz-EKR} in Section~\ref{Qp-EKR}.
	Finally, we construct groups with large intersecting subsets and prove Theorem~\ref{no-bound} in Section~\ref{sec:examples}.

	\section{Semiregular subsets and intersecting subsets}\label{FR}
	
	Let $G\leqslant\Sym(\Ome)$ be transitive, and let $S$ be an intersecting subset of $G$, namely, $xy^{-1}$ fixes some points of $\Ome$ for any elements $x,y\in S$.
	
	For a point $\o\in\Ome$, consider the coset decomposition $G=r_0G_\o \cup r_1G_\o \cup\dots\cup r_{n-1}G_\o$, where $r_0=1$.
	Let
	\[S_i=S\cap(r_iG_\o),\ \mbox{where $0\leqslant i\leqslant n-1$}.\]
	Then $S=S_0\cup S_1\cup\dots\cup S_{n-1}$, and $S_i\cap S_j=\emptyset$.
	For $0\leqslant i\leqslant n-1$, let
	\[H_i=r_i^{-1}S_i=\{r_i^{-1}x\mid x\in S_i\}.\]
	Then $H_i\subset G_\o$, and $|H_i|=|S_i|$, where $0\leqslant i\leqslant n-1$.
	
	\begin{lemma}\label{key-1}
		If $H_i\cap H_j\not=\emptyset$ for some $i,j$ with $i\not=j$, then $r_ir_j^{-1}$ fixes a point.
	\end{lemma}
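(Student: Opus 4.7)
The plan is to directly unpack the definitions. Suppose $h \in H_i \cap H_j$. By the definition of $H_i = r_i^{-1} S_i$, there exists $x \in S_i \subseteq S$ such that $h = r_i^{-1} x$, and similarly by the definition of $H_j = r_j^{-1} S_j$, there exists $y \in S_j \subseteq S$ such that $h = r_j^{-1} y$.

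Equating these two expressions for $h$ gives $r_i^{-1} x = r_j^{-1} y$, and rearranging yields $x y^{-1} = r_i r_j^{-1}$. Since $x, y \in S$ and $S$ is by hypothesis an intersecting subset of $G$, the ratio $xy^{-1}$ must fix some point of $\Omega$. Therefore $r_i r_j^{-1}$ fixes a point, which is the desired conclusion.

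There is essentially no obstacle here: the lemma is a bookkeeping observation that the coset-translates $H_i$ encode, via the intersecting property of $S$, precisely the fixed-point information of the ratios $r_i r_j^{-1}$. The only subtle point worth mentioning is to make sure of the direction of the translation (left-multiplication by $r_i^{-1}$, so that the $H_i$ lie inside $G_\omega$), after which the identification $xy^{-1} = r_i r_j^{-1}$ falls out immediately.
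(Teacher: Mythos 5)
Your proof is correct and is essentially identical to the paper's: both take $h\in H_i\cap H_j$, observe that $r_ih\in S_i$ and $r_jh\in S_j$ are elements of $S$ whose ratio is exactly $r_ir_j^{-1}$, and invoke the intersecting property of $S$. No issues.
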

	\proof
	Let $h\in G_\o$ be such that $h\in H_i\cap H_j$.
	Then $r_ih,r_jh\in S$, and thus
	\[(r_ih)(r_jh)^{-1}=r_ir_j^{-1}.\]
	Since $S$ is an intersecting subset, we have $r_ir_j^{-1}=(r_ih)(r_jh)^{-1}$ fixes some point.
	\qed
	
	We give two proofs for Proposition~\ref{thm:semireg}.
	
	Let $S$ be an intersecting subset, and $R$ a semiregular subset of $G$.
	
	
	

	\subsection*{Proof of Proposition~\ref{thm:semireg}}
	Let $R=\{r_1,r_2,\dots,r_t\}$ with $r_1=1$ be a semiregular subset of $G$, and let $m=|\Ome|/t$.
	Then $R$ has $m$ orbits on $\Ome$, say $\Del_1,\Del_2,\dots,\Del_m$.
	
	Let $\Del_i=\{\d_{i1},\d_{i2},\dots,\d_{it}\}$, where $1\le i\le m$ and $\d_{ij}=\d_{i1}^{r_j}$ with $1\le j\le t$.
	Let $\d_{i1}=\d_{1,1}^{y_i}$, where $y_i\in G$ with $1\le i\le m$.
	Then, $\d_{ij}=\d_{i1}^{r_j}=\d_{11}^{y_ir_j}$, and $\{y_ir_j\mid 1\le i\le m, 1\le j\le t\}$ is a transversal of $G$ on $\Ome$, namely, letting $\o=\d_{11}$,
	\[G=\bigcup_{1\le i\le m}\bigcup_{1\le j\le t}G_\o y_ir_j.\]
	Then we have a coset partition for $S$:
	\[S=S\cap G=\bigcup_{1\le i\le m}\bigcup_{1\le j\le t} \left(S\cap(G_\o y_ir_j)\right)=\bigcup_{1\le i\le m}\bigcup_{1\le j\le t}S_{ij},\]
	where $S_{ij}=S\cap(G_\o y_ir_j)$.
	Since $(G_\o y_ir_j)\cap(G_\o y_{i'}r_{j'})=\emptyset$ for $(i,j)\not=(i',j')$, we have that $S_{ij}\cap S_{i'j'}=\emptyset$, and hence $|S|=\bigcup_{1\le i\le m}\bigcup_{1\le j\le t}|S_{ij}|$.
	Let
	\[\mbox{$H_{ij}=S_{ij}(y_ir_j)^{-1}\subset G_\o$,}\]
	where $1\leqslant i\leqslant m$ and $1\le j\le t$.
	Then $|H_{ij}|=|S_{ij}|$ for all admissible $i,j$, and
	\[|S|=\bigcup_{1\le i\le m}\bigcup_{1\le j\le t}|S_{ij}|=\bigcup_{1\le i\le m}\bigcup_{1\le j\le t}|H_{ij}|.\]
	
	Suppose that $h\in H_{ij}\cap H_{ij'}\not=\emptyset$, where $j\not=j'$.
	Then
	\[\mbox{$h=s_{ij}(y_ir_j)^{-1}$, and $h=s_{ij'}(y_{i}r_{j'})^{-1}$, where $s_{ij}\in S_{ij}$ and $s_{ij'}\in S_{ij'}$.}\]
	Thus $s_{ij}(y_ir_j)^{-1}=s_{ij'}(y_{i}r_{j'})^{-1}$, and so
	\[s_{ij}^{-1}s_{ij'}=(y_ir_j)^{-1}(y_{i}r_{j'})=r_j^{-1}r_{j'}\]
	fixes a point.
	Hence, $r_j^{-1}r_{j'}$ fixes a point, which is not possible since $R$ is a semiregular subset.
	Thus $H_{ij}\cap H_{ij'}=\emptyset$ for $j\not=j'$, and so $\sum_{1\le j\le t}|H_{ij}|\le |G_\o|$.
	Therefore, we conclude that
	\[|S|=\bigcup_{1\le i\le m}\bigcup_{1\le j\le t}|S_{ij}|=\bigcup_{1\le i\le m}\bigcup_{1\le j\le t}|H_{ij}|\le m|G_\o|.\]
	Moreover, $|S|\le m|G_\o|={|\Ome|\over|R|}{|G|\over|\Ome|}={|G|\over|R|}$, and so $|R||S|\le|G|$.
	\qed
	
	
	With Proposition~\ref{thm:semireg}, we construct new examples of permutation groups which have the EKR property.
	
	\begin{lemma}\label{cons:sharply-trnas-set}
		Let $Q\cong\AGL(1,q),$ where $q=p^f$ and $p$ is an odd prime, and suppose $Q$ is a transitive permutation group of degree ${1\over2}q(q-1)$ on a set $\Omega$.
		Then $Q$ contains a sharply transitive set, and thus satisfies the EKR property.
	\end{lemma}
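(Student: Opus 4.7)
The plan is to realize $Q$ as the group of affine maps $x \mapsto ax+b$ on $\mathbb{F}_q$, with translation subgroup $T$ of order $q$ and diagonal complement $H$ of order $q-1$, and then to exhibit a regular subset and invoke Proposition~\ref{thm:semireg}(3). Since $|Q|=q(q-1)=2|\Omega|$, each point stabilizer of the given action has order $2$. Because $p$ is odd, the involutions of $Q$ are precisely the $q$ maps $x \mapsto -x+c$ with $c \in \mathbb{F}_q$, and conjugating $x \mapsto -x$ by the translation $x \mapsto x+t$ produces $x \mapsto -x+2t$, so all involutions form a single $Q$-conjugacy class. Consequently the $q$ subgroups of order $2$ are all conjugate and hence are exactly the point stabilizers; thus an element of $Q$ fixes a point of $\Omega$ if and only if it is either the identity or an involution --- equivalently, iff its coordinate form $x \mapsto ax+b$ satisfies $(a,b)=(1,0)$ or $a=-1$.

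Next I would build the regular subset. Choose $M \subseteq \mathbb{F}_q \setminus \{0\}$ with $1 \in M$, $|M|=(q-1)/2$, and $M \cap (-M) = \emptyset$; such $M$ exists because the pairs $\{a,-a\}$ partition $\mathbb{F}_q \setminus \{0\}$ into $(q-1)/2$ classes (as $-1 \ne 1$ when $p$ is odd). Set
\[ R = TM = \{x \mapsto ax+b \;:\; a \in M,\ b \in \mathbb{F}_q\}, \]
so that $1 \in R$ and $|R| = q \cdot (q-1)/2 = |\Omega|$.

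The verification that $R$ is semiregular is then a short check: for $r: x \mapsto a_1 x+b_1$ and $s: x \mapsto a_2 x+b_2$ in $R$, the product $rs^{-1}$ has diagonal part $a_1 a_2^{-1}$, which by the choice of $M$ can never equal $-1$; and if $a_1 = a_2$ but $r \neq s$ then $rs^{-1}$ is a non-identity translation. In either case $rs^{-1}$ is neither the identity nor an involution, so by the previous paragraph it fixes no point of $\Omega$. Thus $R$ is a regular (sharply transitive) subset, and Proposition~\ref{thm:semireg}(3) yields the EKR property for $Q$. The only step requiring any care is the identification of the fixed-point-producing elements of $Q$ on $\Omega$, which rests squarely on the single-conjugacy-class fact for involutions; this is what turns the seemingly coarse condition $M \cap (-M) = \emptyset$ into a sufficient obstruction.
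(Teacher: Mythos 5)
Your proof is correct and is essentially the paper's argument: both construct a sharply transitive set as a union of $(q-1)/2$ cosets of the translation subgroup whose linear parts avoid ratio $-1$, after identifying the point stabilizers with the order-$2$ subgroups via the conjugacy of involutions. The only difference is that your single condition $M\cap(-M)=\emptyset$ uniformly subsumes the paper's case split on $q \bmod 4$ (where $M$ is taken to be the squares when $q\equiv 3 \pmod 4$ and an interval of powers of a generator when $q\equiv 1\pmod 4$), which is a mild tidying rather than a new route.
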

	\begin{proof}
		Let  $P\cong\ZZ_p^f$ be the minimal normal subgroup of $Q$, and $g\in Q$ be of order $q-1$.
		Let $c=g^{q-1\over2}$, the unique involution of $\l g\r$.
		Note that $|Q_{\omega}|=2$ for any $\omega\in \Omega$ and that all involutions in $Q$ are conjugate to $c$ in $Q$. We thus may assume $\Ome=[Q:\l c\r]$.

		If $q\equiv3$ $(\mod 4)$, then ${q-1\over2}$ is odd, and
		$R=P{:}\l g^2\r\cong\ZZ_p^f{:}\ZZ_{q-1\over2}$ is regular on $\Ome$.
		
		Assume that $q=p^f\equiv 1$ $(\mod 4)$.
		Let \[R=Q\setminus \bigcup_{i=1}^{\frac{q-1}{2}}Pg^i.\]
		Then $|R|=q(q-1)-q{q-1\over2}=q{q-1\over2}$.
		We claim that $R$ is sharply transitive on $\Ome$.
		
		Note that each element of $Q$ is of order dividing $q(q-1)$, and $Q_{\omega}=\ZZ_2$.
		Since $R$ does not contain any involution of $Q$, the only element in $R$ that fixes some point in $\Omega$ is the identity.
		Let $x,y$ be distinct elements of $R$.
		Then $x=ag^i$ and $y=bg^j$, where $a,b\in P$ and ${q-1\over2}<i,j\leqslant q-1$.
		Thus
		\[xy^{-1}=(ag^i)(bg^j)^{-1}=ag^{i-j}b^{-1}.g^{-(i-j)}g^{i-j}=ab'g^{i-j},\]
		where $b'=g^{i-j}b^{-1}.g^{-(i-j)}\in P$.
		If $i=j$, then $xy^{-1}=ab'$ is of order $p$ and fixes no point of $\Omega$.
		Suppose that $i\not=j$.
		Since ${q-1\over2}<i,j\leqslant q-1$, it follows that $i-j$ is indivisible by ${q-1\over2}$.
		Thus $xy^{-1}$ is not an involution and fixes no point.
		It follows that $R$ is a sharply transitive set.
	\end{proof}
	
	\begin{corollary}\label{PGL-EKR}
		Let $q=p^f$ with $p$  an odd prime, and $G=\PGL(2,q)$ act on $\Ome$ such that $G_\o=\D_{2(q+1)}$, where $\o\in\Ome$.
		Then $G$ has a sharply transitive subset $R$ and the EKR property.
	\end{corollary}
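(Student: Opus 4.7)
The plan is to realize a concrete subgroup $Q\cong\AGL(1,q)$ inside $G=\PGL(2,q)$ whose action on $\Ome$ has degree $q(q-1)/2$ with point stabilizer of order $2$, so that Lemma~\ref{cons:sharply-trnas-set} directly provides a sharply transitive subset $R\subseteq Q\subseteq G$; then Proposition~\ref{thm:semireg}(3) yields the EKR property. Note that $|\Ome|=|G|/|G_\o|=q(q^2-1)/(2(q+1))=q(q-1)/2$, which already matches the degree hypothesis of Lemma~\ref{cons:sharply-trnas-set}.

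First, I would identify $\Ome$ explicitly. The subgroup $\D_{2(q+1)}\le\PGL(2,q)$ is the normalizer of a non-split maximal torus, so the coset action of $G$ on $\Ome$ is permutation-isomorphic to the action of $G$ on the set of unordered Galois-conjugate pairs $\{\alpha,\alpha^q\}$ with $\alpha\in\FF_{q^2}\setminus\FF_q$, of which there are exactly $(q^2-q)/2=q(q-1)/2$.

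Second, I would take $Q=G_\infty\cong\AGL(1,q)$, the Borel subgroup consisting of maps $z\mapsto az+b$ with $a\in\FF_q^\ast$, $b\in\FF_q$. This $Q$ extends its action on $\PG(1,q)$ to $\PG(1,q^2)$ by the same formula. A quick check shows $Q$ acts \emph{regularly} on $\FF_{q^2}\setminus\FF_q$: if $a\alpha+b=\alpha$ with $\alpha\notin\FF_q$, then $(a-1)\alpha=-b\in\FF_q$ forces $a=1$ and $b=0$. Since $|\FF_{q^2}\setminus\FF_q|=q(q-1)=|Q|$, the action is regular. Passing to unordered conjugate pairs, $Q$ acts transitively on $\Ome$ with a stabilizer of order exactly $2$: the non-identity stabilizer of $\{\alpha,\alpha^q\}$ is the unique map sending $\alpha\mapsto\alpha^q$, namely $z\mapsto -z+\Tr_{\FF_{q^2}/\FF_q}(\alpha)$.

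Third, having established that $Q\cong\AGL(1,q)$ is transitive on $\Ome$ of degree $q(q-1)/2$, Lemma~\ref{cons:sharply-trnas-set} produces a sharply transitive subset $R\subseteq Q$ on $\Ome$. Viewed inside $G$, $R$ is then a regular subset of $G$ acting on $\Ome$, and Proposition~\ref{thm:semireg}(3) concludes that $G$ has the EKR property.

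The only nonroutine step is verifying that the right Borel subgroup $Q$ acts transitively on $\Ome$ with stabilizer of order $2$; the rest is an immediate citation of Lemma~\ref{cons:sharply-trnas-set} and Proposition~\ref{thm:semireg}. Once one adopts the $\PG(1,q^2)$ viewpoint, this verification reduces to the short regularity calculation above, so I do not anticipate any serious obstacle.
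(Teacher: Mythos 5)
Your proposal is correct and follows essentially the same route as the paper: both take the maximal parabolic $Q\cong\AGL(1,q)$, show it is transitive on $\Ome$ of degree $\frac{1}{2}q(q-1)$, and then invoke Lemma~\ref{cons:sharply-trnas-set} together with Proposition~\ref{thm:semireg}. The only difference is that the paper deduces transitivity of $Q$ from the factorization $G=QG_\o$, whereas you verify it by an explicit (and correct) computation on Galois-conjugate pairs in $\FF_{q^2}\setminus\FF_q$.
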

	\begin{proof}
		Let $Q\leqslant G$ be a maximal parabolic subgroup of $G.$
		Then $Q\cong \AGL(1,q).$
		Note that $G=QG_\omega$ for each $\omega\in \Omega.$
		We thus have $Q\leqslant G$ is transitive of degree $\frac{1}{2}q(q-1).$
		Therefore, $Q\leqslant G$ contains a sharply transitive set of $G$ and $G$ has the EKR property by Lemma \ref{cons:sharply-trnas-set}.
	\end{proof}

		Recall that $\rho(G/\Omega)\leqslant {\sqrt{|\Omega|}\over|R|}$ for semiregular subgroup $R$.
		If $G$ has a `large' semiregular subset, then $\rho(G/\Omega)$ is `small'.
		
		\begin{lemma}\label{lem:|R|>sqrt|Omega|}
			For a transitive permutation group $G$ on $\Omega$, if $G$ has a semiregular subset $R$ such that $|R|\geqslant c\sqrt{|\Omega|}$ for a constant $c$, then $\rho(G/\Omega)\leqslant c^{-1}$.
			
			In particular, if $R<G$ is such that $|R|\geqslant c\sqrt{|G|}$ and $\gcd(|R|,|G_\o|)=1$, then $R$ is a semiregular subgroup and $\rho(G/\Omega)\leqslant c^{-1}$.
		\end{lemma}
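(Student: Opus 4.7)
The proof will be a short direct application of Proposition~\ref{thm:semireg} together with an elementary coprime-order argument for the ``in particular'' clause, so I do not expect any serious obstacles. The plan is as follows.

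For the first assertion, I would simply invoke Proposition~\ref{thm:semireg}(2), which already gives $\rho(G/\Omega)\le\sqrt{|\Omega|}/|R|$ for any semiregular subset $R$. Substituting the hypothesis $|R|\ge c\sqrt{|\Omega|}$ immediately yields
\[
\rho(G/\Omega)\le\frac{\sqrt{|\Omega|}}{|R|}\le\frac{\sqrt{|\Omega|}}{c\sqrt{|\Omega|}}=\frac{1}{c},
\]
which is the desired bound. No further combinatorial work is needed.

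For the ``in particular'' clause, I first need to upgrade $R$ from a mere subgroup to a semiregular subgroup. This is the one step with any content. Since $R$ is a subgroup, its stabilizer $R_\omega=R\cap G_\omega$ on any point $\omega\in\Omega$ is a subgroup both of $R$ and of $G_\omega$. Hence $|R_\omega|$ divides $\gcd(|R|,|G_\omega|)=1$, forcing $R_\omega=\{1\}$ for every $\omega\in\Omega$. Therefore every non-identity element of $R$ is fixed-point-free, and so $R$ is a semiregular subgroup (hence a semiregular subset containing the identity, as required by Definition~\ref{def:semireg-sset}).

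Once semiregularity is established, I just need to verify the hypothesis of the first part with $|\Omega|$ replaced by the larger quantity $|G|=|G_\omega||\Omega|$: from $|R|\ge c\sqrt{|G|}=c\sqrt{|G_\omega|}\sqrt{|\Omega|}\ge c\sqrt{|\Omega|}$ (using $|G_\omega|\ge 1$), the first part of the lemma applies and gives $\rho(G/\Omega)\le c^{-1}$. This completes the plan; the only subtlety is the trivial divisibility argument turning the coprimality condition into semiregularity, and everything else is bookkeeping.
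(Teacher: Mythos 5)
Your proposal is correct and follows essentially the same route as the paper, which simply cites the bound $\rho(G/\Omega)\leqslant \sqrt{|\Omega|}/|R|$ from Proposition~\ref{thm:semireg}(2) and substitutes $|R|\geqslant c\sqrt{|\Omega|}$; your added details for the ``in particular'' clause (the divisibility argument $|R\cap G_\omega|\mid\gcd(|R|,|G_\omega|)=1$ and the observation $\sqrt{|G|}\geqslant\sqrt{|\Omega|}$) are exactly the routine steps the paper leaves implicit.
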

		
		\begin{proof}
			The proof follows from $\rho(G/\Omega)\leqslant {\sqrt{|\Omega|}\over|R|}\leqslant c^{-1}$.
		\end{proof}
		
		We end this section with two examples and a lemma which gives more evidence of Conjecture~\ref{problem-1}.
		
		\begin{example}\label{ex:PSL(2,p)-big-R}
			{\rm
				Let $G=\PSL(2,p)$, where $p\ge5$ is a prime, and $G\le\Sym(\Ome)$, with $\o\in\Ome$.
				We consider three cases.
				
				First, assume that $G_\o=\ZZ_p{:}\ZZ_\ell\le\ZZ_p{:}\ZZ_{p-1\over2}$, with $\ell$ an odd integer dividing $\frac{q-1}{2}$.
				Then $|\Omega|=|G:G_\o|=(p^2-1)/2\ell$, and $R\cong\D_{p+1}$ is a semiregular subgroup on $\Omega$, and thus
				\[\rho(G/\Omega)\leqslant {\sqrt{|\Omega|}\over|R|}={\sqrt{(p^2-1)/2\ell}\over p+1}={\sqrt2\over2}\sqrt{p-1\over (p+1)\ell}<{\sqrt2\over2}.\]
				
				Next, assume that $G_\o\leqslant\D_{p+\varepsilon}$ with $\varepsilon=1$ or $-1$.
				Let $x=|G_\o|$.
				Then $|\Omega|={p(p^2-1)\over2x}$ and $R=\ZZ_p$ is semiregular on $\Omega$.
				Thus,
				\[\rho(G/\Omega)\leqslant {\sqrt{|\Omega|}\over|R|}={\sqrt{p(p^2-1)\over 2x}\over p}={\sqrt2\over2}\sqrt{p^2-1\over px}.\]
				In particular, if $G_\o=\D_{p+\varepsilon}$, then $\rho(G/\Omega)\leqslant{\sqrt2\over2}\sqrt{p-\varepsilon\over p}$.
				Therefore,  $\lim\limits_{p\to\infty}\rho(G/\Omega)={\sqrt2\over2}$,
				and $\rho(G/\Omega)<{\sqrt2\over2}$ if further $\epsilon=1.$
				
				\qed
			}
		\end{example}
		
		\begin{example}\label{ex:PSL-PSp}
			{\rm
				Let $G=\PSL(2m,q)$, and $R=\PSp(2m,q)$ with $m$ an integer and $m>1.$
				Let $p$ be a primitive prime divisor of $q^{2m-1}-1$ (which means $p$ divides $q^{2m}-1$ and $p$ does not divide $q^j-1$ for any $j<2m-1$, and $p$ exists by Zsigmondy's Theorem), and let $G<\Sym(\Ome)$ be such that $G_\o$ is a $p$-group.
				Then $\gcd(|R|,|G_\o|)=1$, $R$ is a semiregular subgroup of $G$ on $\Ome$, and $\rho(G/\Ome)<\sqrt{\frac{|G|}{2}}/|R|<\frac{\sqrt{2}}{2}$.
			}
		\end{example}

		\begin{lemma}\label{lem:simgp}
			Let $G$ be a finite group which has a subgroup $H$ and a prime-order element $x$ such that $x^G\cap H= \emptyset.$
			Then $H$ is a semiregular subgroup of $G$ acting on $\Omega=[G:\langle x \rangle].$
			Further, if $|H|^2>|G|,$ then $\rho(G/\Omega)<\frac{1}{\sqrt{2}}.$
		\end{lemma}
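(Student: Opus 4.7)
The plan is to verify the hypotheses of Proposition~\ref{thm:semireg}(2) with $R = H$, which will give $\rho(G/\Omega)\le\sqrt{|\Omega|}/|H|$, and then combine this with $|\Omega|=|G|/p$ and $|H|^2>|G|$ to conclude.

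First I would analyse the coset action of $G$ on $\Omega=[G:\langle x\rangle]$. Since $|\langle x\rangle|=p$ is prime, the point stabilizers in this action are exactly the $G$-conjugates of $\langle x\rangle$, so the set of elements of $G$ fixing some point of $\Omega$ equals $\bigcup_{g\in G}g\langle x\rangle g^{-1}$. To show $H$ is semiregular, suppose for contradiction that some $h\in H\setminus\{1\}$ fixes a point, so $h\in g\langle x\rangle g^{-1}$ for some $g\in G$. Because $p$ is prime, the nonidentity element $h$ generates $g\langle x\rangle g^{-1}$, hence some power $h^k$ equals $gxg^{-1}\in x^G$. But $h^k\in H$ since $H$ is a subgroup, contradicting the hypothesis $x^G\cap H=\emptyset$. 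Therefore the only element of $H$ which fixes any point of $\Omega$ is the identity, which is exactly the semiregularity condition in Definition~\ref{def:semireg-sset}.

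Second, I would invoke Proposition~\ref{thm:semireg}(2) and do the short estimate. Since $|\Omega|=|G|/p$ and, by assumption, $|G|<|H|^2$,
\[\rho(G/\Omega)\;\le\;\frac{\sqrt{|\Omega|}}{|H|}\;=\;\frac{\sqrt{|G|/p}}{|H|}\;<\;\frac{\sqrt{|H|^2/p}}{|H|}\;=\;\frac{1}{\sqrt{p}}\;\le\;\frac{1}{\sqrt{2}},\]
which yields the claimed bound.

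The only nonroutine step is the semiregularity of $H$; the subsequent estimate is a direct computation from Proposition~\ref{thm:semireg}(2) once $|\Omega|=|G|/p$ is noted. The key observation that makes the first step work is that primality of $|\langle x\rangle|$ upgrades the one-conjugacy-class hypothesis $x^G\cap H=\emptyset$ into the stronger statement that $H$ meets \emph{every} conjugate of $\langle x\rangle$ trivially, since subgroup closure of $H$ allows us to pass from any nonidentity element of such a conjugate to a $G$-conjugate of $x$ by taking a suitable power.
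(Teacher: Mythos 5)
Your proof is correct and follows essentially the same route as the paper: establish that $H$ is semiregular on $[G:\langle x\rangle]$ and then apply Proposition~\ref{thm:semireg}(2) together with $|H|^2>|G|$. You actually spell out the one step the paper leaves implicit (that primality of $|x|$ lets you pass from $x^G\cap H=\emptyset$ to $H$ meeting every conjugate of $\langle x\rangle$ trivially), and your estimate via $\sqrt{|G|/p}$ is slightly cleaner than the paper's, which writes $\sqrt{|\Omega|}=\sqrt{|G|/2}$ where an inequality is meant.
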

	    \begin{proof}
	    	Since each non-identity element in $H$ is not conjugate to any element in $\langle x \rangle$ in $G,$ it follows that each element in $H$ is a derangement of $G$ acting on $\Omega,$ and $H$ is a semiregular subgroup.
	    	If further $|H|^2 > |G|,$ then 
	    	\begin{equation*}
	    		\rho(G/\Omega)\leqslant \frac{\sqrt{|\Omega|}}{|H|}=\frac{\sqrt{|G|/2}}{|H|}<\frac{1}{\sqrt{2}}    	\end{equation*}
    		by Proposition~\ref{thm:semireg}~(2).
    		This completes the proof.
	    	\end{proof}	
    	
    	\begin{lemma}\label{lem:simp}
    		Let $G$ be a non-abelian simple group, then there is a permutation representation of $G$ on $\Omega$ such that $\rho(G/\Omega)<\frac{1}{\sqrt{2}}.$
    	\end{lemma}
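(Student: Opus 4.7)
The plan is to apply Lemma~\ref{lem:simgp}: for each non-abelian simple $G$ I need to exhibit a subgroup $H\leqslant G$ satisfying $|H|^2>|G|$ together with a prime-order element $x\in G$ such that $x^G\cap H=\emptyset$. A sufficient condition for the second requirement is that $|x|$ divides $|G|$ but not $|H|$, which reduces the search to finding a sufficiently large subgroup $H$ missing some prime divisor $p$ of $|G|$. I would then go through the families supplied by the classification of finite simple groups.

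For $G=A_n$ with $n\geqslant 5$, take $H=A_{n-1}$; then $|H|^2/|G|=(n-2)!/2\geqslant 3$, and for $x$ any prime-order derangement of the natural $n$-point action will do---for instance a product of $n/p$ disjoint $p$-cycles when $p$ is an odd prime divisor of $n$, or a product of $n/2$ transpositions when $n$ is a power of two (which lies in $A_n$ since $n/2$ is then even, using $n\geqslant 8$).

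For a simple group $G$ of Lie type of rank at least two over $\FF_q$, take $H$ to be a maximal parabolic subgroup. Its index $|G:H|$ is a polynomial in $q$ of degree strictly less than $\tfrac12\dim G$, so $|H|^2>|G|$. For the prime, a primitive prime divisor of $q^e-1$ (supplied by Zsygmondy's theorem outside a finite list of exceptions), for an exponent $e$ exceeding the rank of the Levi factor of the chosen parabolic, divides $|G|$ but not $|H|$.

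For the rank-one families $\PSL(2,q)$, $\PSU(3,q)$, $\Sz(q)$, $\Ree(q)$, take $H$ to be a Borel subgroup; the inequality $|H|^2>|G|$ is checked by direct computation in each family. For $\PSL(2,q)$ with $q\equiv 3\pmod 4$ the Borel has odd order, so $p=2$ works; in all other rank-one cases one picks $p$ to be a primitive prime divisor of the relevant cyclotomic factor ($q+1$, $q^2-q+1$, $q\pm\sqrt{2q}+1$, or $q\pm\sqrt{3q}+1$ respectively), which is automatically coprime to $|H|$. The sporadic simple groups are dispatched one by one from the Atlas.

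The main obstacle is to secure the simultaneous conditions $|H|^2>|G|$ and $p\nmid|H|$ in the smallest members of each Lie type family, where the Zsygmondy prime may fail to exist; those finitely many residual cases have to be handled by choosing an alternative maximal subgroup or by direct computation.
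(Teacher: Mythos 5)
Your proposal is correct in outline and rests on the same key lemma as the paper (Lemma~\ref{lem:simgp}, so the whole burden is to produce $H$ with $|H|^2>|G|$ and a prime-order $x$ with $x^G\cap H=\emptyset$), and your choices of $H$ (point stabilizer in $A_n$, parabolic/isotropic-point stabilizer in Lie type, Atlas search for sporadics) essentially coincide with the paper's. Where you genuinely diverge is in how the prime-order element is obtained: the paper simply cites Giudici's theorem \cite{M2003}, which says that for \emph{any} core-free $H$ the action of $G$ on $[G:H]$ admits a fixed-point-free element of prime order, with the single exception $G=\mathrm{M}_{11}$, $|G:H|=12$; this makes the existence of $x$ completely uniform and decouples it from the choice of $H$. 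You instead manufacture $x$ by hand — explicit products of $p$-cycles in $A_n$, and primes dividing $|G|$ but not $|H|$ via Zsigmondy — which is more elementary and self-contained but buys you extra work: you must match the Zsigmondy exponent to each parabolic, and you are left with the finitely many Zsigmondy-exceptional cases (e.g.\ $q^e-1$ with $(q,e)=(2,6)$, or $e=2$ with $q$ Mersenne) that you acknowledge but do not resolve. Those residual cases are genuinely finite and patchable (alternative subgroups or direct computation), so this is a completeness issue rather than a flaw in the method; still, if you want a clean proof you should either close that list explicitly or replace your divisibility argument by the citation of \cite{M2003} as the paper does, at which point only the $\mathrm{M}_{11}$ exception and the inequality $|H|^2>|G|$ remain to be checked.
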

        \begin{proof}
            Let $G$ be a non-abelian simple group. 
        	In view of Lemma~\ref{lem:simgp}, it suffices to find a subgroup $H$ of $G$ such that $|H|^2> |G|$ and $G$ contains an prime-order element $x$ with $x^G\cap H=\emptyset.$
        	If $G=M_{11},$ then it is easy to find such subgroup $H$ of $G.$
        	The main result of \cite{M2003} implies that $G$ always contain a prime-order element $x$ with $x^G\cap H=\emptyset$, unless $G=M_{11}$ and $|G:H|=12.$
            We thus may assume $G\neq M_{11},$ and it remains to find a subgroup $H$ of $G$ with $|H|^2> |G|.$ We deal with the cases $G$ is in five family of simple groups, respectively.

        	If $G$ is alternating, then $H=A_{n-1}$ is a subgroup of $G$ with $|H|^2>|G|.$

            If $G$ is a sporadic group, then it is not difficult to find a maximal subgroup $H$ of $G$ with $|H|^2> |G|$ by examining some  maximal groups of $G$ at  \cite{W_WebAt}.
            
            Assume $G$ is a classical group. Then we take $H$ to be a stabilizer of $G$ acting on $1$-dimensional isotropic spaces, and $|H|^2> |G|.$
            
            Finally, we assume $G$ is a simple group of exceptional type.
            Then by a direct calculation, $G$ has a parabolic subgroup $H$ with $|H|^2> |G|.$
            
            This completes the proof.
        \end{proof}

	\section{Proof of Theorem~\ref{qp-EKR}}\label{large}
	
	The proof of Theorem~\ref{qp-EKR} involves constructions of groups without EKR property.
	
	\subsection{Examples from almost simple groups}\
	
	We present examples in the almost simple group case.
	
	\begin{lemma}\label{AS-pri-EKR}
		Let $T=\PSL(2,2^e)$ with $e\geqslant2$, $H=\D_{2(2^e-1)}$, and $\Delta=[T:H]$.
		Then
		\begin{enumerate}[{\rm(1)}]
			\item $T$ is a primitive permutation simple group on $\Delta$ of degree $2^{e-1}(2^e+1)$, and
			\item $T$ has a semiregular subgroup $R\cong \ZZ_{2^e+1}$, and
			\item each intersecting subset of $T$ has size at most $2^e(2^e-1)$, and
			\item $S=\AGL(1,2^e)=\ZZ_2^e{:}\ZZ_{2^e-1}$ is a maximum intersecting subset, and
			\item $ \lim\limits_{e\to \infty}\rho(T/\Del)={\sqrt2\over2}.$
		\end{enumerate}
	\end{lemma}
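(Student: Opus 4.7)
The plan is to handle the five parts in sequence, using standard facts about $\PSL(2,q)$ with $q=2^e$ together with Proposition~\ref{thm:semireg}. For (1), I would invoke Dickson's classification of subgroups of $\PSL(2,q)$, which shows that $H = \D_{2(q-1)}$ is a maximal subgroup of $T$ for $q = 2^e \geqslant 4$, so the coset action on $\Del = [T:H]$ is primitive; the degree is $|T|/|H| = \frac{(q-1)q(q+1)}{2(q-1)} = 2^{e-1}(2^e+1)$. For (2), I would take $R$ to be a cyclic subgroup of $T$ of order $q+1 = 2^e+1$ (a non-split torus). Since $2^e+1$ is odd and $\gcd(2^e+1,\,2^e-1) = 1$, we have $\gcd(|R|,|H|) = 1$, so $R \cap H^g = 1$ for every $g \in T$; equivalently, no non-identity element of $R$ fixes a point of $\Del$, proving that $R$ is semiregular.

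Part (3) then follows immediately from Proposition~\ref{thm:semireg}(1): any intersecting subset has cardinality at most $|T|/|R| = q(q-1) = 2^e(2^e-1)$. The main content lies in (4): I want to show that the Borel subgroup $S = \AGL(1, 2^e) \leqslant T$ is itself an intersecting subset, since $|S| = q(q-1)$ already matches the upper bound from (3) and would therefore realise the maximum.

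The crucial observation for (4) is that $S$ is a \emph{subgroup} of $T$, so for any $x,y \in S$ the ratio $xy^{-1}$ again lies in $S$; it thus suffices to prove that every element of $S$ fixes some point of $\Del$, equivalently that every element of $S$ is $T$-conjugate to an element of $H$. The non-identity elements of $\AGL(1, 2^e)$ split into two types: involutions (translations, since the characteristic is $2$) and elements whose order divides $2^e-1$ (from the Levi complement $\ZZ_{2^e-1}$). In $T = \PSL(2, 2^e)$ all involutions form a single conjugacy class, and this class meets $H = \D_{2(2^e-1)}$ since $H$ contains involutions. Meanwhile, the cyclic complement $\ZZ_{2^e-1}$ in $S$ is $T$-conjugate to the index-$2$ cyclic subgroup of $H$ (both are split tori, and split tori form a single $T$-conjugacy class), so every element of $S$ of order dividing $2^e-1$ is $T$-conjugate to an element of $H$. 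Combining these gives (4).

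Part (5) is then a direct calculation:
\[
\rho(T/\Del) \;=\; \frac{|S|}{|T_\delta|\,\sqrt{|\Del|}} \;=\; \frac{2^e(2^e-1)}{2(2^e-1)\,\sqrt{2^{e-1}(2^e+1)}} \;=\; \sqrt{\frac{2^{e-1}}{2^e+1}} \;\longrightarrow\; \frac{\sqrt{2}}{2}
\]
as $e \to \infty$. The main obstacle is the conjugacy-class bookkeeping in (4); the remaining parts reduce to degree counts once the maximality of $H$, the coprimality used in (2), and the two standard conjugacy facts in $\PSL(2, 2^e)$ are in hand.
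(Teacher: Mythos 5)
Your proposal is correct and follows essentially the same route as the paper: the cyclic subgroup of order $2^e+1$ is semiregular by coprimality, Proposition~\ref{thm:semireg} gives the upper bound $2^e(2^e-1)$, the Borel subgroup $\AGL(1,2^e)$ attains it because each of its elements is $T$-conjugate into $H$, and the limit is a direct computation. The only cosmetic difference is in part (4): the paper justifies the conjugacy claim by citing the single fact that cyclic subgroups of equal order in $\PSL(2,2^e)$ are conjugate, whereas you split it into the two standard facts that involutions form one class and that split tori are conjugate; both are valid.
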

	\begin{proof}
		Part~(1) is clearly true.
		
		Let $R$ be a cyclic subgroup of $T$ of order $2^e+1$.
		Then $\gcd(|R|,|H|)=1$, and so $R$ is semiregular on $\Del$, as in part~(2).

		By Proposition\ref{thm:semireg}, each intersecting subset of $T$ has size at most $\frac{|G|}{|R|}=2^e(2^e-1)$, as in part~(3).
		
		Let $S=2^e{:}(2^e-1)$ be a maximal parabolic subgroup of $T$.
		Then $S$ is a Frobenius group.
		Let $x$ be an element of $S$.
		Then either $x$ is an involution, or the order $|x|$ divides $2^e-1$.
		The main result of \cite{CJ_cyclic} implies that cyclic subgroups of $T=\PSL(2,2^e)$ of the same order are conjugate.
		Thus, each element of $S$ is conjugate to some element of $H$.
		Thus $S$ is an intersecting subset of $T=\PSL(2,2^e)$ on $\Delta=[T:H]$, as in part~(4).
		
		Finally, we show part (5), we have ${|S|\over|H|}=2^{e-1}$, and $|\Delta|=2^{e-1}(2^e+1)$.
		Thus
		\[\rho(T/\Del)={|S|\over|H|\sqrt{|\Delta|}}={2^{e-1}\over\sqrt{2^{e-1}(2^e+1)}}
		=\sqrt{2^{e-1}\over2^e+1}<{\sqrt2\over2},\]
		and
		\[\lim_{e\to\infty}\rho(T/\Del)=\lim_{e\to\infty}\sqrt{2^{e-1}\over2^e+1}={\sqrt2\over2}.\]
	\end{proof}

	\subsection{Examples from groups of product action type}\
	
	We next introduce a generic construction of intersecting subsets in groups of product action type.	
	
	\begin{construction}\label{prod-cons}
		{\rm
			Let $T\leqslant\Sym(\Delta)$ be transitive with $\delta \in \Del$, and let $P$ be a transitive permutation group of degree $\ell$.
			Let
			\[
			\left\{\begin{array}{l}
				G=T\wr P=T^\ell{:}P,\\
				K=T_\delta\wr P=T_\delta^\ell{:}P,\\
				\Ome=\Delta^\ell.\\
			\end{array}\right.
			\]
			Let $S_0\leqslant T$ be an intersecting subset of $T$, and let $S=S_0^\ell{:}P$.
			\qed
		}
	\end{construction}

	\begin{lemma}\label{prod-lemma}
		Let $G,K,\Ome$ and $S$ be as defined in Construction~$\ref{prod-cons}$.
		Then $S\leqslant G$ is an intersecting subset of $G$.
	\end{lemma}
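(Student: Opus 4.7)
The plan is to exploit the fact that $S=S_0^\ell{:}P$ is a subgroup of $G=T\wr P$ (because $S_0$ is a subgroup of $T$ and $P$ normalises $S_0^\ell$ by coordinate-permutation). Consequently $xy^{-1}\in S$ for any $x,y\in S$, and it suffices to prove that every element of $S$ fixes some point of $\Ome=\Del^\ell$. Moreover, since $S_0$ is an intersecting subgroup of $T$ and $1\in S_0$, applying the intersecting condition to the pair $(s,1)$ gives that every $s\in S_0$ fixes some point of $\Del$; in other words $S_0\subseteq\bigcup_{\d\in\Del}T_\d$.

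Next I would unpack the product action. Take an arbitrary $z=(c_1,\dots,c_\ell)\pi\in S$ with $c_i\in S_0$ and $\pi\in P$. Under the product action of $T\wr P$ on $\Del^\ell$, the element $z$ fixes $(\o_1,\dots,\o_\ell)$ if and only if
\[
\o_j^{\,c_j}=\o_{j^\pi}\qquad\text{for every }j\in\{1,\dots,\ell\}.
\]
The strategy is to construct such a tuple one $\pi$-cycle at a time, using the cycle decomposition of $\pi$ on $\{1,\dots,\ell\}$.

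For a cycle $(j_0,j_1,\dots,j_{k-1})$ with $j_{i+1}=j_i^\pi$, iterating the relation around the cycle shows that the problem reduces to choosing $\o_{j_0}\in\Del$ fixed by the product $c_{j_0}c_{j_1}\cdots c_{j_{k-1}}$; the remaining entries are then forced recursively by $\o_{j_{i+1}}=\o_{j_i}^{c_{j_i}}$, and the cyclic closure $\o_{j_0}^{\,c_{j_0}\cdots c_{j_{k-1}}}=\o_{j_0}$ is exactly the chosen condition. The key point is that this product lies in $S_0$, because $S_0$ is a subgroup; by the first paragraph, it therefore fixes at least one point of $\Del$, which we take as $\o_{j_0}$. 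Performing this construction independently on each $\pi$-cycle yields a fixed point of $z$ in $\Ome$.

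The main obstacle is essentially bookkeeping rather than substance: one must adopt a consistent convention for the wreath product so that the coordinates appear in the correct order in the cyclic product $c_{j_0}\cdots c_{j_{k-1}}$. Once this is set up, the proof rests on two clean observations, namely the subgroup hypothesis on $S_0$ (which guarantees closure under cyclic products) and the trivial fact that an intersecting subgroup is contained in the union of point stabilisers.
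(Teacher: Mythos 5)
Your proof is correct, and it rests on the same two pillars as the paper's argument: the subgroup hypothesis on $S_0$ (so that it suffices to check that every single element of $S$ fixes a point, and so that products of base coordinates taken around a cycle of the top permutation stay inside $S_0$), together with the fact that an intersecting subgroup containing the identity lies in $\bigcup_{\delta\in\Delta}T_\delta$. The execution, however, is genuinely different and worth noting. The paper conjugates $x=(s_1,\dots,s_\ell)\sigma_x$ by an element of $S_0^{\ell}$, invoking a result of Cannon--Holt on conjugacy class representatives in wreath products to concentrate the base part into one coordinate per cycle of $\sigma_x$, and then conjugates once more by a diagonal-type element to land inside the point stabilizer $K=T_\delta^{\ell}{:}P$. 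You instead solve the fixed-point equations $\omega_j^{c_j}=\omega_{j^\pi}$ directly: on each cycle $(j_0,\dots,j_{k-1})$ of $\pi$ you pick a point of $\Delta$ fixed by the cyclic product $c_{j_0}c_{j_1}\cdots c_{j_{k-1}}\in S_0$ and propagate it around the cycle, the disjointness of the cycles letting you do this independently on each. Your route is more elementary and self-contained (no external citation), at the modest cost of fixing the wreath-product convention so that the coordinates appear in the right order in the cyclic product, which you correctly flag; the paper's route packages the identical cyclic-product observation as a conjugacy statement. I see no gap in your argument.
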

	\begin{proof}
		By definition, $P$ is a transitive permutation group of degree $\ell$.
		Write elements of $G$ as
		\[\{(t_1,t_2,\dots,t_\ell)\sigma \mid t_1,t_2\dots,t_\ell\in T, \sigma \in P\}.\]
		Pick an element $x\in S$.
		Then there exists a permutation $\sigma_x\in P$ and $\ell$ elements $s_i\in S_0$ for $1\leqslant i \leqslant \ell$ such that
		\[x=(s_1,s_2,\dots, s_\ell)\sigma_x.\]
		For a cycle $\sigma=(n_1,n_2,\dots,n_k)\in \Sy_\ell$ (note that $k\geqslant 1$), we denote the elements of $\{1,2,\dots, \ell\}$ that lies in $\sigma$ by $\mathrm{Supp}(\sigma),$ which means $\mathrm{Supp}(\sigma)=\{n_1,n_2,\dots, n_k \}.$
		Assume that $\sigma_x=\sigma_{x1}\sigma_{x2}\dots \sigma_{xr}$, where $\sigma_{x1},\sigma_{x2},\dots ,\sigma_{xr}$ are $r$ disjoint cycles with
		$$\biguplus_{j=1}^{r}\mathrm{Supp}(\sigma_{xj})=\{1,2,\dots, \ell\}.$$
		Let $L\subseteq \{1,\dots, \ell\}$ such that $|L\cap \mathrm{Supp}(\sigma_{xj})|=1$ for each $1\leqslant j \leqslant r.$
		Then  \cite[Proposition 2.1 ]{CH_conj06} implies there exists $y\in S_0^{\ell}$ such that $x^y=(\Tilde{s_1},\Tilde{s_2},\dots, \Tilde{s_\ell})\sigma_x$, where $\Tilde{s_i}\in S_0$ for $1\leqslant i \leqslant \ell$ and $\Tilde{s_i}=1$ if $i\notin L.$
		Since $S_0\leqslant T$ is an intersecting subset of $T$ , for each $1\leqslant j \leqslant r$ and each $m_j\in L\cap \mathrm{Supp}(\sigma_{xj}),$ $\Tilde{s_{m_j}}=\Tilde{s_{m_j}}e^{-1}$ fixes some point in $\Del$ and there exists $d_j\in T$ such that $\Tilde{s_{m_j}}^{d_j}\in T_\delta.$ Let $z=(d_1',d_2',\dots, d_\ell')\in T^\ell$ such that $d_i'=d_j$ if $i\in \mathrm{Supp}(\sigma_{xj}),$ recall that $\{1,2,\dots, \ell\}=\biguplus_{1\leqslant j \leqslant r}\mathrm{Supp}(\sigma_{xj}).$
		Then $z\sigma_x=\s_xz$, and
		\[x^{yz}=(x^y)^z=(\Tilde{s_1},\Tilde{s_2},\dots, \Tilde{s_\ell})^{z}\sigma_x\in T_\delta^\ell{:}P.\]
		Thus, each $x\in S$ fixes some point in $\Ome,$ and $s_1s_2^{-1}\in S$ fixes some point in $\Omega$ for any $s_1,s_2\in S.$ Therefore, $S$ is an intersecting subset.
	\end{proof}

	\subsection*{Proof of Theorem~\ref{qp-EKR}}\
	
	O'Nan-Scott Theorem (see \cite{Pra1993}) divides primitive permutation groups into eight types, which are categorized into three classes for our purpose:
	\begin{enumerate}[(i)]
		\item $G$ has a subnormal subgroup which is regular, and $G$ is of type HA, HS, HC, SD, CD, or TW;
		
		\item $G$ is an almost simple group;
		
		\item $G$ is in product action.

	\end{enumerate}

	\vskip0.1in
	Let $G\leqslant\Sym(\Ome)$ be quasiprimitive, and let $N=\soc(G)$ be the socle of $G$, namely, the product of all minimal normal subgroups of $G$.
	
	First, assume that $N$ has a normal subgroup which is regular on $\Ome$.
	Then, by Proposition\ref{thm:semireg}, $G$ has the EKR property, as in part~(i) of Theorem~\ref{qp-EKR}.
	
	Assume now that $N$ does not have a normal subgroup which is regular on $\Ome$.
	Then by the O'Nan-Scott theorem, see \cite{Pra1993}, either $G$ is of almost simple type, or $G$ is of product action type, as in part~(ii) of Theorem~\ref{qp-EKR}.
	
	For part~(a) of Theorem~\ref{qp-EKR}\,(ii), the proof then follows from Lemma~\ref{AS-pri-EKR}.
	
	Finally, we need to construct a primitive permutation group of the product action type which has large intersecting subsets.
	
	Let $T=T_i=\PSL(2,2^i)$ and $S_0=\ZZ_2^i{:}\ZZ_{2^i-1}$, where $i\ge2$, and let $G,K,\Ome$ and $S$ be as in Construction~\ref{prod-cons}.
	Then $G$ is a primitive group on $\Ome$ in product action, of degree $(2^{i-1}(2^i+1))^\ell$, $G_\o=K=\D_{2(2^i-1)}^\ell{:}P$.
	Let
	\[S=(\ZZ_2^i{:}\ZZ_{2^i-1})^\ell{:}P.\]
	Then $S$ is an intersecting subset by Lemma~\ref{prod-lemma}.
	
	Let $R_i<T_i$ such that $R_i\cong\ZZ_{2^i+1}$, and let $R=R_1\times\dots\times R_\ell$.
	Then $R$ is semiregular on $\Ome$, with $r=2^{\ell(i-1)}$ orbits.
	Thus an upper bound for an intersecting subset is
	\[r|G_\o|=2^{\ell(i-1)}.(2(2^i-1))^\ell|P|=2^{\ell i}(2^i-1)^\ell|P|.\]
	Thus $S=(\ZZ_2^i{:}\ZZ_{2^i-1})^\ell{:}P$ is a maximum intersecting subset, and
	\[\rho(G/\Ome)={|S|\over|G_\o|\sqrt{|\Ome|}}={2^{\ell(i-1)}\over \sqrt{(2^{i-1}(2^i+1))^\ell}}
	={1\over (2+1/2^{i-1})^{\ell/2}},\]
	and so
	\[\lim_{i\to\infty}\rho(G/\Ome)=\lim_{i\to\infty}{|S|\over|G_\o|\sqrt{|\Ome|}}={1\over 2^{\ell/2}}=\left({\sqrt2\over2}\right)^\ell.\]
	Part~(b) of Theorem~\ref{qp-EKR}\,(ii) then follows, completing the proof of Theorem~\ref{qp-EKR}.
	\qed

	\section{Intersecting subsets of Suzuki groups}\label{Qp-EKR}
	
	In this section, we prove Conjecture~\ref{problem-1} for Suzuki simple groups, and in particular prove Theorem~\ref{thm:Sz-EKR}.

		We first collect some basic properties for Suzuki group $G=\Sz(q).$
		Let $G=\Sz(q)$, where $q=2^e$ with $e\ge3$ odd.
		Then $|G|=q^2(q-1)(q^2+1)$. 
		
		The maximal subgroups of $G$ are recorded in \cite[Theorem 7.3.3]{low-dim-subgps}. By exhausting the subgroups of the maximal subgroups of $G$, it is not difficult to prove the following lemma.
		
		\begin{lemma}
			A solvable subgroup of $G$ is conjuate to a subgroup of one of the following four maximal groups
			\begin{equation*}
				\N_G(S_2)\cong \ZZ_2^e.\ZZ_2^e{:}\ZZ_{q-1}; \;
				\N_G(A_0)\cong \D_{2(q-1)};\; \N_G(A_1)\cong \ZZ_{q+r+1}{:}\ZZ_4;\; \N_G(A_2)\cong \ZZ_{q-r+1}{:}\ZZ_4,
			\end{equation*}
			where $S_2$ is a Sylow $2$-subgroup of $G,$ and $A_0,$ $A_1,$ $A_2$ are three cyclic subgroups of $G$ of order $q-1, q+r+1$ and $q-r+1,$ respectively.
			In particular, for any $x\in G,$ the order of $x$ divides one of $q-1,$ $q+r+1$ and $q-r+1,$ where $r=2^{\frac{e+1}{2}}.$
			
			A non-solvable subgroup of $G$ is isomorphic to $\Sz(q_0),$ where $q=q_0^s$ for some integer $s.$
		\end{lemma}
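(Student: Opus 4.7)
The plan is to take the classification of maximal subgroups of $\Sz(q)$ from the cited \cite[Theorem 7.3.3]{low-dim-subgps} as input, and proceed by induction on $|G|$. That classification states that, up to $G$-conjugacy, every maximal subgroup of $G=\Sz(q)$ is either one of the four solvable groups $\N_G(S_2)$, $\N_G(A_0)$, $\N_G(A_1)$, $\N_G(A_2)$ already named in the statement, or a subfield subgroup $\Sz(q_0)$ with $q=q_0^s$ for some prime $s$. The base of the induction is trivial, and the split between ``solvable'' and ``non-solvable'' in the conclusion matches the split between the first four maximal types and the last.

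The non-solvable assertion is then immediate: a non-solvable proper subgroup $H\le G$ is contained in some maximal subgroup and, being non-solvable, lies in $\Sz(q_0)$ for some $q=q_0^s$. The inductive hypothesis applied inside $\Sz(q_0)$ yields either $H=\Sz(q_0)$ or $H\cong\Sz(q_1)$ with $q_0=q_1^{s'}$; composing the exponents gives $H\cong\Sz(q')$ with $q=(q')^{s''}$ in either case.

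For the solvable assertion, a solvable $H\le G$ again lies in some maximal subgroup. If $H$ is contained in one of the four solvable maximals we are done; otherwise $H\le\Sz(q_0)$, and induction places $H$, up to $\Sz(q_0)$-conjugacy, inside one of the four analogous solvable subgroups of $\Sz(q_0)$. The key substep is then to show that each of these subgroups of $\Sz(q_0)$ embeds into a $G$-conjugate of one of the four solvable maximals of $G$. Since $e$ is odd, the exponent $s=e/e_0$ is odd, so $q_0-1\mid q-1$ (which places the Borel and the $A_{0,0}$-normalizer of $\Sz(q_0)$ into $\N_G(S_2)$ and $\N_G(A_0)$ respectively) and $q_0^2+1\mid q_0^{2s}+1=q^2+1$ (using that $a^s+1$ is divisible by $a+1$ for odd $s$, with $a=q_0^2$). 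A generator of a twisted cyclic torus $A_{i,0}\le\Sz(q_0)$ is then an element of $G$ of odd order dividing $q^2+1$; since $q^2+1$ is coprime to $q-1$ and factors as $(q+r+1)(q-r+1)$ with the two factors coprime, and since every odd-order element of $G$ lies in a unique maximal torus of order $q-1$, $q+r+1$, or $q-r+1$, we conclude that the generator, and hence all of $A_{i,0}$, lies in a $G$-conjugate of $A_1$ or $A_2$.

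The main obstacle is the subsequent check that the $\ZZ_4$-extension normalizing a twisted torus of $\Sz(q_0)$ actually sits inside the $G$-normalizer of the containing torus, not merely inside the containing torus itself. I would handle this by arguing that the containing $A_j\le G$ is the unique cyclic subgroup of $G$ of its order containing $A_{i,0}$ (equivalently, $A_j=\C_G(y)$ for any generator $y$ of $A_{i,0}$), so that any element of $\Sz(q_0)$ normalizing $A_{i,0}$ automatically normalizes $A_j$. The ``in particular'' clause on element orders then drops out by applying the main assertion to the cyclic (hence solvable) subgroup $\langle x\rangle$ and reading off the possible orders of elements in each of the four listed solvable subgroup types.
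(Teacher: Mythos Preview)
Your proposal is correct and follows essentially the same route as the paper. The paper itself gives only a one-line sketch (``By exhausting the subgroups of the maximal subgroups of $G$, it is not difficult to prove the following lemma''), and your induction on $|G|$ through the subfield tower is precisely the natural way to carry out that exhaustion; the details you supply (TI-property of the Sylow $2$-subgroup and of the tori to pin down the unique overgroup, and the centralizer argument $A_j=\C_G(y)$ to lift the $\ZZ_4$-normalizer) are exactly what is needed to make the sketch rigorous.
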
 
		
		\begin{lemma}\label{lem:Sz-basic-pty}
			Let $Q$ be a Sylow $2$-subgroup of $G.$
			Then $\N_G(Q)$ can be identified with $QK,$ where
			\begin{equation*}
				\begin{aligned}	
					Q=\{(\alpha,\beta)\mid \alpha,\beta\in \mathbb{F}_q, (\alpha,\beta)(\alpha_1,\beta_1)&=(\alpha+\alpha_1,\alpha\alpha_1^\theta+\beta+\beta_1)\}\cong \ZZ_2^e.\ZZ_2^e;\\
					K=\{(\kappa)\mid \kappa\in\mathbb{F}_q^\times\}\cong \mathbb{F}_q^\times; & \quad (\alpha,\beta)^\kappa=(\alpha\kappa, \beta\kappa^{1+\theta}),
				\end{aligned}
			\end{equation*}
			and $\theta$ is an automorphism of $\mathbb{F}_q$ such that $\theta^2$ raise every element in $\mathbb{F}_q$ to its square.
			
			In particular, $Q$ contains $q-1$ involutions and $q^2-q$ elements of order $4$, and $Z(Q)$ is an elementary-abelian $2$ group of order $q.$
			Moreover, $Q\cap Q^g=1$ for $g\notin \N_G(Q).$  
		\end{lemma}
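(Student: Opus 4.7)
The plan is to invoke the standard coordinate realization of $\Sz(q)$ as a subgroup of $\Sp(4,q)$ due to Suzuki (as presented, for instance, in Wilson's book on the finite simple groups or in Huppert--Blackburn), in which the unipotent radical of a Borel subgroup is parametrized by pairs $(\alpha,\beta)\in\FF_q^2$ and the split torus by $\kappa\in\FF_q^\times$. A direct matrix computation then yields both the composition law $(\alpha,\beta)(\alpha_1,\beta_1)=(\alpha+\alpha_1,\alpha\alpha_1^\theta+\beta+\beta_1)$ and the conjugation formula $(\alpha,\beta)^\kappa=(\alpha\kappa,\beta\kappa^{1+\theta})$. Since $|Q|=q^2$ is the full $2$-part of $|G|=q^2(q-1)(q^2+1)$, $Q$ is a Sylow $2$-subgroup, and comparison with the order of the first maximal subgroup listed in the preceding lemma forces $\N_G(Q)=QK$.

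For the enumeration of low-order elements, the product rule immediately gives
\[
(\alpha,\beta)^2=(0,\alpha^{1+\theta}).
\]
Hence $(\alpha,\beta)$ is an involution precisely when $\alpha=0$ and $\beta\neq 0$, yielding $q-1$ involutions; and if $\alpha\neq 0$, then $\alpha^{1+\theta}\neq 0$, so $(\alpha,\beta)$ squares to a nontrivial involution and has order exactly $4$, giving $q(q-1)=q^2-q$ elements of order $4$ (and these exhaust $Q$). Comparing $(a,b)(\alpha,\beta)$ with $(\alpha,\beta)(a,b)$ reduces the centrality condition to $a\alpha^\theta=\alpha a^\theta$ for all $\alpha\in\FF_q$; the case $a\neq 0$ would force $\alpha^\theta/\alpha$ to be constant on $\FF_q^\times$, which is impossible since $\theta$ is not the identity automorphism. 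Therefore $Z(Q)=\{(0,b):b\in\FF_q\}$, and the composition law restricted to this set reads $(0,b)(0,b')=(0,b+b')$, so $Z(Q)\cong(\FF_q,+)$ is elementary abelian of order $q$.

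The remaining TI-property $Q\cap Q^g=1$ for $g\notin\N_G(Q)$ reflects the rank-one split $(B,N)$-pair structure of $\Sz(q)$ established in Suzuki's original construction, so I would cite it directly; alternatively it can be recovered from the maximal subgroup list in the preceding lemma, since any nonidentity $2$-element lies in a unique Sylow $2$-subgroup (as the only maximal subgroups of $G$ containing $2$-elements are conjugates of $\N_G(Q)$, and the intersection of two distinct such conjugates would have order coprime to $2$ by the matrix centralizer computation). The only real obstacle in writing out the full proof is the notational bookkeeping in the matrix calculation that produces the composition and conjugation formulas; once these are in place, every remaining assertion follows from a short algebraic manipulation.
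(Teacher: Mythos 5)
Your proposal is correct and follows essentially the same route as the paper: the coordinate presentation of $Q$ and $K$ is imported from Suzuki's original construction, the counts of involutions and order-$4$ elements and the identification of $Z(Q)$ follow from the direct computation $(\alpha,\beta)^2=(0,\alpha^{1+\theta})$ together with the commutator condition $a\alpha^\theta=\alpha a^\theta$, and the TI-property $Q\cap Q^g=1$ for $g\notin\N_G(Q)$ is taken from the literature (the paper cites Bray--Holt--Roney-Dougal, Lemma 7.3.2, where you cite Suzuki's rank-one $(B,N)$-pair structure). The only difference is that you write out the element-order and center computations explicitly where the paper simply says ``by a direct computation,'' which is harmless.
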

		\begin{proof}
		   We refer to \cite[Theorem 9]{Suzuki} and the equation (35) at \cite[Page 133]{Suzuki} for the identification of $\N_G(Q)$ with $QK$.
		   Then by a direct computation, $Z(Q)$ is elementary abelian of order $q,$ and $Z(Q)\backslash 1$ coincides with all involutions in $Q.$
		   Finally, by \cite[Lemma 7.3.2]{low-dim-subgps}, $Q$ intersects trivially with $Q^g$ whenever $g\notin \N_G(Q).$
	   	\end{proof}


		An important combinatorial tool is the derangement graph and its spectrum.
		
		\begin{definition}\label{def:D-graph}
			{\rm
				Let $G$ be a transitive permutation group on $\Omega.$
				Let $D(G,\Omega)$ be the set of elements in $G$ that fix no point in $\Omega,$ and let 
				\[\Gamma_{G,\Omega}=\Cay(G, D(G,\Omega)),\]
				which is called the {\it derangement graph} of $G$ acting on $\Omega.$
			}
		\end{definition}
		
		The importance of a derangement graph is indicated by the following: 
		\begin{itemize}
			\item[(i)] an intersecting set of $G$ acting on $\Omega$ forms a coclique of $\Gamma_{G,\Omega}$, and 
			\item[(ii)] a semiregular subset forms a clique of the graph $\Gamma_{G,\Omega}$.
		\end{itemize}
		
		We now introduce the weighted-adjacency matrix of derangement graph $\Gamma_{G,\Omega}$ and its eigenvalues, and the Hoffman's bound for cocliques in regular graphs.
		Then for each action of $G,$ we use Hoffman bound to limit the size of maximum intersecting subset and prove Theorem~\ref{thm:Sz-EKR}.
		
		For a graph $\Gamma$, a real symmetric matrix $M$ is called \textit{compatible} with $\Gamma$ if its rows and columns are indexed by the vertex of $\Gamma$ such that $M_{u,v}=0$ whenever $u$ is not adjacent to $v$ in $\Gamma$.
		Clearly, the adjacency matrix of $\Gamma$ is compatible with $\Gamma$.
		
		The following lemma states the well-known Delsarte-Hoffman bound, and is used to bound the size of maximum independent set of $\Gamma_{G,\Omega}$.
		
		\begin{lemma}\label{lem:hoff}{\rm(Delsarte-Hoffman bound).}
			Let $M$ be a real symmetric matrix with constant row sum $d$, which is compatible with a graph $\Gamma$ with $n$ vertices.
			If the least eigenvalue of $M$ is $\tau,$ then for any independent set $S$ of $\Gamma$
			\begin{equation*}
				|S|\leqslant \frac{n}{1-\frac{d}{\tau}}.
			\end{equation*}
		\end{lemma}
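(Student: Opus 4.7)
The plan is to derive the bound by combining the spectral decomposition of $M$ with the observation that the quadratic form $v_S^\top M v_S$ vanishes whenever $v_S$ is the characteristic vector of an independent set. Let $v_S \in \mathbb{R}^n$ denote the $\{0,1\}$-vector with support $S$. Since $\Gamma$ has no loops, a vertex is never adjacent to itself, so the compatibility hypothesis forces $M_{uu}=0$ in addition to $M_{uv}=0$ for every non-edge $uv$. For an independent set $S$, every pair $(u,v)\in S\times S$ (including $u=v$) is a non-edge, hence
\[
v_S^\top M v_S \;=\; \sum_{u,v\in S} M_{uv} \;=\; 0.
\]

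Next, I would spectrally decompose $M$. Since $M$ is real symmetric, it admits an orthonormal eigenbasis $w_1,\dots,w_n$ with real eigenvalues $\lambda_1,\dots,\lambda_n$. The constant row sum hypothesis says $M\mathbf{1}=d\mathbf{1}$, so I may take $w_1=\mathbf{1}/\sqrt{n}$ and $\lambda_1=d$. Expanding $v_S=\sum_i \alpha_i w_i$ gives
\[
\alpha_1=\frac{v_S^\top \mathbf{1}}{\sqrt n}=\frac{|S|}{\sqrt n},\qquad \|v_S\|^2=|S|=\sum_i \alpha_i^2,
\]
and the vanishing of $v_S^\top M v_S$ becomes
\[
0 \;=\; \sum_i \lambda_i \alpha_i^2 \;=\; d\,\alpha_1^2 + \sum_{i\ge 2}\lambda_i\alpha_i^2 \;\ge\; d\,\alpha_1^2 + \tau\sum_{i\ge 2}\alpha_i^2 \;=\; d\,\alpha_1^2 + \tau\bigl(|S|-\alpha_1^2\bigr),
\]
where I used $\lambda_i\ge \tau$ for each $i$ together with $\alpha_i^2\ge 0$.

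Finally, rearranging and substituting $\alpha_1^2=|S|^2/n$ gives $(d-\tau)|S|^2/n \le -\tau\,|S|$. Observing that $\tau\le 0$ (since the trace of $M$ is zero, forcing a non-positive eigenvalue) and $d-\tau>0$, I can divide to obtain
\[
|S| \;\le\; \frac{-\tau\, n}{d-\tau} \;=\; \frac{n}{1-\tfrac{d}{\tau}},
\]
as required. The only conceptual point that needs care is the zero-diagonal remark at the start; once that is established, the argument is a routine Rayleigh-quotient manipulation and presents no further obstacle.
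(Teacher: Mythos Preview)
The paper does not actually prove this lemma; it is quoted as the ``well-known Delsarte--Hoffman bound'' and used as a black box throughout Section~\ref{Qp-EKR}. Your argument is the standard Rayleigh-quotient proof and is correct. The only small caveat is that when you divide by $d-\tau$ and form $1-d/\tau$ you are implicitly assuming $\tau\ne 0$ (equivalently $M\ne 0$); but if $M=0$ then $d=\tau=0$ and the stated bound is not even well-defined, so this is a tacit hypothesis of the lemma rather than a gap in your reasoning.
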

		
		A class function $f$ from $G$ to $\mathbb{R}$ is called {\it $(G,\Omega)$-compatible} if $f(d)=f(d^{-1})$ for each $d\in G$, and $f(d)=0$ for each $d\notin D(G,\Omega).$
	
		For a compatible class function $f$, let $M^f$ be a matrix of degree $|G|$, which is indexed by the elements in $G$, such that \[M^f_{g_1,g_2}=f(g_1^{-1}g_2).\]
		Then $M^f$ is a symmetric $\Gamma_{G,\Omega}$-compatible matrix, with eigenvalue given in next lemma of Babai, Diaconis and Shahshahani.
		
		\begin{lemma}\label{lem:specm}{\rm(\cite[Lemma 5]{DS_generating})}
			The matrix $M^f$ is a symmetric $\Gamma_{G,\Omega}$-compatible matrix with spectrum
			$$\mathrm{spec}(M^f)=\left\{\frac{1}{\chi(1)}\sum_{g\in G}f(g)\chi(g)\mid \chi\in \mathrm{Irr}_{\mathbb{C}}(G) \right\},$$
			where $\mathrm{Irr}_{\mathbb{C}}(G)$ denotes the set of all irreducible characters of $G$ over $\mathbb{C}.$
		\end{lemma}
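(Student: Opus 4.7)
The plan is to establish the two assertions of the lemma separately: first the structural properties (symmetry and $\Gamma_{G,\Omega}$-compatibility), and then the spectral description via the representation theory of $\mathbb{C}[G]$.

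\textbf{Compatibility and symmetry.} The Cayley graph $\Gamma_{G,\Omega}=\Cay(G, D(G,\Omega))$ has $g_1\sim g_2$ iff $g_1^{-1}g_2\in D(G,\Omega)$, so the $\Gamma_{G,\Omega}$-compatibility of $M^f$ reduces to the requirement $M^f_{g_1,g_2}=0$ whenever $g_1^{-1}g_2\notin D(G,\Omega)$, which is immediate from the hypothesis that $f$ vanishes off $D(G,\Omega)$. Symmetry follows from the other hypothesis $f(d)=f(d^{-1})$: indeed $M^f_{g_1,g_2}=f(g_1^{-1}g_2)=f((g_2^{-1}g_1)^{-1})=f(g_2^{-1}g_1)=M^f_{g_2,g_1}$.

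\textbf{Identifying $M^f$ inside the group algebra.} The key move is to recognize $M^f$ as the matrix, in the natural basis $\{e_g\}_{g\in G}$ of the regular module, of right-multiplication by $\hat f:=\sum_{h\in G}f(h)h\in \mathbb{C}[G]$. A direct calculation gives $e_{g_1}\cdot\hat f=\sum_{g_2}f(g_1^{-1}g_2)e_{g_2}$, so this operator has matrix precisely $M^f$. Because $f$ is a class function, $\hat f$ lies in the center $Z(\mathbb{C}[G])$.

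\textbf{Spectrum via Wedderburn and Schur.} Invoke the Wedderburn decomposition $\mathbb{C}[G]\cong\bigoplus_{\chi\in\mathrm{Irr}_{\mathbb{C}}(G)} M_{\chi(1)}(\mathbb{C})$, under which the regular module decomposes as $\bigoplus_\chi V_\chi^{\oplus\chi(1)}$. By Schur's lemma, the central element $\hat f$ acts on each irreducible $V_\chi$ as a scalar $\lambda_\chi$; taking the trace of this action yields $\chi(1)\lambda_\chi=\sum_{g\in G}f(g)\chi(g)$, whence $\lambda_\chi=\frac{1}{\chi(1)}\sum_{g\in G}f(g)\chi(g)$. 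Consequently the spectrum of $M^f$ consists exactly of these values $\lambda_\chi$, with each $\lambda_\chi$ occurring with multiplicity $\chi(1)^2$.

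The only genuinely delicate point is pinning down the left/right multiplication convention so that the operator on $\mathbb{C}[G]$ is represented precisely by $M^f$, rather than by its transpose, in the chosen basis; once this is fixed the rest is standard character-theoretic bookkeeping, and the symmetry $f(d)=f(d^{-1})$ together with real-valuedness of $f$ forces each $\lambda_\chi$ to be real, as it must be since $M^f$ is a real symmetric matrix.
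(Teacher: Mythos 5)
Your proof is correct. The paper gives no proof of this lemma at all --- it is quoted verbatim from \cite[Lemma 5]{DS_generating} --- and your argument (realizing $M^f$ as right multiplication by the central element $\widehat{f}=\sum_h f(h)h$ of $\mathbb{C}[G]$ and reading off the eigenvalues $\lambda_\chi=\frac{1}{\chi(1)}\sum_g f(g)\chi(g)$ with multiplicity $\chi(1)^2$ via Wedderburn and Schur) is precisely the standard proof of that cited result, with the symmetry and compatibility checks handled correctly; as you note, the left/right convention is immaterial for the spectrum since $M^f$ equals its transpose.
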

		
		We shall denote the eigenvalues of $M^f$ by
		\[\lambda_{\chi, f}=\frac{1}{\chi(1)}\sum_{g\in G}f(g)\chi(g),\ \mbox{where $\chi\in\mathrm{Irr}_{\mathbb{C}}(G)$}.\]
		We notice that the row sum of $M^f$ is $\sum_{g\in G}f(g).$
		
		
		In view of Lemma~\ref{lem:specm}, we note that for any $(G,\Omega)$-compatible class function $f,$ the eigenvalues of $M^f$ is determined by $f$ and the irreducible characters of $G.$
		Thus we collect the character table of $G$ from  \cite[Theorem 13]{Suzuki}, presented in Table~\ref{tab:Szcha}.
		
		Let $r=\sqrt{2q}=2^{e+1\over2}$, and let $A_0, A_1, A_2<G$ be such that
		\[A_0=\l\zeta_0\r\cong \ZZ_{q-1}, \ A_1=\l\zeta_1\r\cong \ZZ_{q+r+1}, \ A_2=\l\zeta_2\r \cong \ZZ_{q-r+1}.\]
		Then each $A_m$ is a Hall subgroup of $G$ for $m\in \{0,1,2\}$.
		Let $\rho$ be an element of order $4$ of $G$.
		Then
		\begin{equation*}
			G=1 \uplus \rho^G \uplus (\rho^{-1})^G \uplus (\rho^2)^G \uplus (A_0\backslash 1)^G \uplus (A_1\backslash 1)^G \uplus (A_2\backslash 1)^G.
		\end{equation*}
		
		\small{
			\begin{table}[h]
				\begin{tabular}{|c|cccccc|}
					\hline
					& $1$ & $\rho^2$ & $\rho$, $\rho^{-1}$ & $\pi_0 \in A_0$ & $\pi_1 \in A_1$ & $\pi_2 \in A_2$ \\
					\hline
					$\bf{1}$ & $1$ & $1$ & $1$ & $1$ & $1$ & $1$ \\
					$X$ & $q^2$ & $0$ & $0$ & $1$ & $-1$ & $-1$\\
					$X_{i}$, $1 \leqslant i\leqslant \frac{q}{2}-1$ & $q^2+1$ & $1$ & $1$ & $\epsilon_0^i(\pi_0)$ & $0$ & $0$\\
					$Y_j, 1\leqslant j \leqslant \frac{q+r}{4}$& $(q-r+1)(q-1)$ & $r-1$ & $-1$ & $0$ & $-\epsilon_1^j(\pi_1)$ & $0$ \\
					$Z_k, 1\leqslant k\leqslant \frac{q-r}{4}$ & $(q+r+1)(q-1)$ & $-r-1$ & $-1$ & $0$ & $0$ & $-\epsilon_2^k(\pi_2)$ \\
					$W_\ell, \ell\in\{1,2\}$ & $\frac{r(q-1)}{2}$ & $-\frac{r}{2}$ & $\pm r\frac{\sqrt{-1}}{2}$ & $0$ & $1$ & $-1$ \\
					\hline		
				\end{tabular}
				\caption{Character table of $\Sz(q)$}
			\end{table}\label{tab:Szcha}
		}
		
		We denote the trivial character of $G$ by $\mathbf{1}.$ As in \cite[Theorem 13]{Suzuki}, we denote the permutation character corresponding to the $2$-transitive action of $G$ by $X,$ and $\{X_i, 1 \leqslant i \leqslant \frac{q}{2}-1\},$ $\{Y_j, 1\leqslant j \leqslant \frac{q+r}{4}\},$ $\{Z_k, 1 \leqslant k \leqslant \frac{q-r}{4}\},$ $\{W_\ell, 1\leqslant \ell \leqslant 2\}$ are four families of irreducible characters of $G$.    
		
		We next explain $\epsilon_0^i,$ $\epsilon_1^j$ and $\epsilon_2^k$ in Table~\ref{tab:Szcha}.
		Recall that $A_m=\l\zeta_m\r$ where $m\in\{0,1,2\}$, and let $e_m$ be a primitive $|A_m|$-th root of unity.
		%
		%
		Then
		\begin{equation*}
			\begin{aligned}
				\epsilon_0^i(\zeta_0^s)&=e_0^{is}+e_0^{-is};\\
				\epsilon_1^j(\zeta_1^s)&=e_1^{js}+e_1^{jsq}+e_1^{-js}+e_1^{-jsq};\\
				\epsilon_2^k(\zeta_2^s)&=e_2^{ks}+e_2^{ksq}+e_2^{-ks}+e_2^{-ksq}.
			\end{aligned}
		\end{equation*}
		
		For each value $m\in\{0,1,2\}$, there exists a cyclic group $B_m\leqslant A_m$ such that 
		\[D(G, \Omega)\cap (A_m\backslash 1)^G=(A_m\backslash B_m)^G.\] 
		We will construct a  $(G,\Omega)$-compatible class function $f$ such that $f$ is constant on $(A_m\backslash B_m)^G$ for $m=0$, $1$ or $2$.
		Then for an irreducible character $\chi$ of $G$, the eigenvalue $\lambda_{\chi, f}$ is calculated as follows.
		
		\begin{lemma}\label{eign}
			Let $f$ be a $(G,\Omega)$-compatible class function which is constant on $(A_m\backslash B_m)^G$ for $m\in\{0,1,2\}$.
			Then, for any irreducible character $\chi$ of $G,$ the eigenvalue of $M^f$ afforded by $\chi$ is equal to
			\begin{equation*}
				\begin{aligned}
					\lambda_{\chi, f}&=\frac{1}{\chi(1)}(f(\rho^2)(q-1)(q^2+1)+f(\rho)q(q-1)(q^2+1)+\\
					&f(\zeta_0)\sum_{g\in (A_0\backslash B_0)^G}\chi(g)+f(\zeta_1)\sum_{g\in (A_1\backslash B_1)^G}\chi(g)+f(\zeta_2)\sum_{g\in (A_2\backslash B_2)^G}\chi(g))
				\end{aligned}
			\end{equation*}
		\end{lemma}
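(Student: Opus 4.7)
The plan is to apply Lemma~\ref{lem:specm} directly, which gives
\[
\lambda_{\chi,f} = \frac{1}{\chi(1)}\sum_{g\in G} f(g)\chi(g),
\]
and then to split the summation according to the conjugacy class decomposition of $G = \Sz(q)$ recorded just before the lemma, namely
\[
G = \{1\} \uplus (\rho^2)^G \uplus \rho^G \uplus (\rho^{-1})^G \uplus (A_0\backslash 1)^G \uplus (A_1\backslash 1)^G \uplus (A_2\backslash 1)^G.
\]
Since $f$ is a class function, its value on each conjugacy class $C$ is a single scalar $f(c)$, so $\sum_{g\in C} f(g)\chi(g) = f(c)\sum_{g\in C}\chi(g)$; when $C = (\rho^2)^G$ or $\rho^G$ the latter sum collapses further to $|C|\chi(c)$, contributing the constant coefficients appearing in the claimed formula.

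Next, I would use the $(G,\Omega)$-compatibility of $f$ to eliminate several pieces. Because $f$ vanishes off $D(G,\Omega)$, the identity contributes nothing, and by the defining property of the subgroups $B_m$ (the non-identity elements of which are non-derangements in $(A_m\backslash 1)^G$), the $B_m$-parts of the sums over $(A_m\backslash 1)^G$ also vanish, leaving only $f(\zeta_m)\sum_{g\in (A_m\backslash B_m)^G}\chi(g)$ for each $m$. The symmetry $f(d) = f(d^{-1})$ that is part of the compatibility hypothesis forces $f(\rho) = f(\rho^{-1})$, so the pair of conjugacy classes $\rho^G$ and $(\rho^{-1})^G$ can be merged into a single term with common coefficient $f(\rho)$.

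The remaining step is a bookkeeping computation of the class sizes $|(\rho^2)^G|$ and $|\rho^G| + |(\rho^{-1})^G|$. From Lemma~\ref{lem:Sz-basic-pty}, the involutions of $G$ are exactly the non-identity elements of $Z(Q)$ for a Sylow $2$-subgroup $Q$; since $Q\cap Q^g = 1$ for every $g\notin \N_G(Q)$ and $Q$ has order $q^2$ with centralizer equal to itself on any of its involutions, these involutions form one $G$-class of size $|G|/q^2 = (q-1)(q^2+1)$. Counting the $q^2-q$ order-$4$ elements of $Q$ across the $q^2+1$ Sylow $2$-subgroups (which intersect trivially) yields $|\rho^G|+|(\rho^{-1})^G| = (q^2+1)(q^2-q) = q(q-1)(q^2+1)$. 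Substituting these class sizes and collecting the surviving class sums over $(A_m\backslash B_m)^G$ produces the stated closed-form expression.

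The only real obstacle I anticipate is careful handling of the character values $\chi(\rho^2)$ and $\chi(\rho)\pm\chi(\rho^{-1})$ that formally sit inside the class sums $\sum_{g\in C}\chi(g)$; in particular, one must use that the two order-$4$ classes are swapped by inversion, so the compatibility condition matches the pair $(\chi(\rho),\chi(\rho^{-1}))$ in the right way (this matters for $W_\ell$, where $\chi(\rho)\neq\chi(\rho^{-1})$). Once that is tracked, the proof is a routine collection of terms.
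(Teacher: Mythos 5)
Your proposal is correct and follows essentially the same route as the paper: apply Lemma~\ref{lem:specm}, split the sum over the conjugacy class decomposition of $\Sz(q)$ recorded before the character table, use the compatibility of $f$ to kill the identity and the $B_m$-parts and to merge $\rho^G$ with $(\rho^{-1})^G$, and compute the two unipotent class sizes from Lemma~\ref{lem:Sz-basic-pty} exactly as the paper does. Your closing caveat is in fact a genuine correction to the statement rather than a mere obstacle: the displayed formula (and the paper's own intermediate equation) omits the factors $\chi(\rho^2)$ and $\chi(\rho)+\chi(\rho^{-1})$ that your class-sum bookkeeping necessarily produces, and the paper's later eigenvalue computations (e.g.\ $\lambda_{W_\ell,f}$, where $\chi(\rho)+\chi(\rho^{-1})=0$ so the order-$4$ classes contribute nothing) only come out right with those character values restored.
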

		
		\begin{proof}
			Note that $f(\rho)=f(\rho^{-1})$ since $f$ is a $(G,\Omega)$-compatible class function, it follows that
			\begin{equation}\label{e:1}
				\begin{aligned}
					\lambda_{\chi, f}&=\frac{1}{\chi(1)}(f(\rho^2)|(\rho^2)^G|+f(\rho)|\rho^G\cup (\rho^{-1})^G|\\
					&+f(\zeta_0)\sum_{g\in (A_0\backslash 1)^G}\chi(g)+f(\zeta_1)\sum_{g\in (A_1\backslash 1)^G}\chi(g)+f(\zeta_2)\sum_{g\in (A_2\backslash 1)^G}\chi(g)).\\
				\end{aligned}.
			\end{equation}
			Let $Q$ be a Sylow $2$-subgroup of $G$.
			Then $Q=2^e.2^e$, $Q\cap Q^g=1$ for $g\notin \N_G(Q)$, and $Q$ contains $q-1$ involutions and $q^2-q$ elements of order $4$, see Lemma~\ref{lem:Sz-basic-pty}.
			Then 
			\begin{equation*}
				\begin{aligned}
					|(\rho^2)^G|&=(q-1)\frac{|G|}{|\N_G(Q)|}=(q-1)(q^2+1);\\
					|\rho^G\cup (\rho^{-1})^G|&=(q^2-q)\frac{|G|}{|\N_G(Q)|}=q(q-1)(q^2+1).
				\end{aligned}
			\end{equation*}
		    On the other hand, $\sum\limits_{g\in (B_m)^G}\chi(g)=0$ for $m\in \{0,1,2\},$ this is because $B_m\subseteq G\backslash D(G,\Omega),$ and hence $\sum\limits_{g\in (A_m \backslash 1)^G}\chi(g)=\sum\limits_{g\in (A_m \backslash B_m)^G}\chi(g).$
		
			By substituting the value of $|(\rho^2)^G|$ and $|\rho^G \cup (\rho^{-1})^G|$ in (\ref{e:1}), and replacing $\sum\limits_{g\in (A_m\backslash 1)^G}\chi(g)$ with $\sum\limits_{g\in (A_m\backslash B_m)^G}\chi(g)$ for $m\in \{0,1,2\},$ we complete this proof.
		\end{proof}
		
		By Lemma~\ref{eign}, in order to compute $\lambda_{\chi, f}$, we first compute the summation of $\chi$ on $(A_m\backslash B_m)^G$ for $m=0$, 1, or 2, in the next lemma.
		
		\begin{lemma}\label{e:sumchar}
			Let $B_m\leqslant A_m$ be such that $B_m\cong\ZZ_{t_m}$, where $m\in\{0,1,2\}$.
			Then 
			\begin{equation*}
				\begin{aligned}
					|(A_0\backslash B_0)^G|&=\frac{q^2(q^2+1)(q-1-t_0)}{2};\\
					|(A_1\backslash B_1)^G|&=\frac{q^2(q-1)(q-r+1)(q+r+1-t_1)}{4}; \\
					|(A_2\backslash B_2)^G|&=\frac{q^2(q-1)(q+r+1)(q-r+1-t_2)}{4};
					\\
					\sum_{g\in (A_0\backslash B_0)^G}X_i(g)&=
					\left\{\begin{array}{ll}
						0, &\quad \text{if $t_0$ does not divide $i$,}\\
						-q^2(q^2+1)t_0, &\quad \text{if $t_0$ divides $i$};
					\end{array}
					\right. \\
					\sum_{g\in (A_1\backslash B_1)^G}Y_j(g)&=\left\{\begin{array}{ll}
						0, & \quad \text{if $t_1$ does not divide $j$,} \\
						t_1q^2(q-1)(q-r+1), & \quad \text{if $t_1$ divides $j$;}
					\end{array}
					\right.\\
					\sum_{g\in (A_2\backslash B_2)^G}Z_k(g)&=\left\{\begin{array}{ll}
						0, & \quad \text{if $t_2$ does not divide $k$,} \\
						t_2q^2(q-1)(q+r+1), & \quad \text{if $t_2$ divides $k$.}
					\end{array} \right. \\
				\end{aligned}
			\end{equation*}
		\end{lemma}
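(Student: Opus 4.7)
The plan is to reduce every quantity in Lemma~\ref{e:sumchar} to a roots-of-unity computation inside the cyclic group $A_m$. Two facts about $\Sz(q)$ drive this: from the maximal subgroup list one reads off
\begin{equation*}
|\N_G(A_0)|=2(q-1),\quad |\N_G(A_1)|=4(q+r+1),\quad |\N_G(A_2)|=4(q-r+1),
\end{equation*}
and each $A_m$ is a TI--subgroup, that is $A_m\cap A_m^g=1$ for every $g\notin \N_G(A_m)$ (the analogue of the Sylow 2-statement in Lemma~\ref{lem:Sz-basic-pty}, provable from the fact that the primes dividing $|A_m|$ occur only inside the unique maximal overgroup $\N_G(A_m)$). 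These together give the disjoint decomposition
\begin{equation*}
(A_m\setminus B_m)^G \;=\; \bigsqcup_{x\N_G(A_m)} \bigl(A_m^{\,x}\setminus B_m^{\,x}\bigr),
\end{equation*}
whence $|(A_m\setminus B_m)^G|=(|A_m|-|B_m|)\,[G:\N_G(A_m)]$; combined with $|G|=q^2(q-1)(q^2+1)$ and the factorisation $q^2+1=(q+r+1)(q-r+1)$, the three cardinality assertions are immediate.

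For the character sums I would use the class--function property together with the same decomposition to get
\begin{equation*}
\sum_{g\in(A_m\setminus B_m)^G}\chi(g)\;=\;[G:\N_G(A_m)]\sum_{a\in A_m\setminus B_m}\chi(a),
\end{equation*}
and then evaluate the right-hand side by orthogonality on $A_m$. For $\chi=X_i$, write $\chi(\zeta_0^s)=e_0^{is}+e_0^{-is}$ for $s\ne 0$ and split the sum as $\sum_{s=1}^{q-2}\chi(\zeta_0^s)-\sum_{a\in B_0\setminus\{1\}}\chi(a)$. The first piece equals $-2$, since $\sum_{s=0}^{q-2}(e_0^{is}+e_0^{-is})=0$ for $1\le i\le q/2-1<q-1$; the second, with $\omega=e_0^{(q-1)/t_0}$ a primitive $t_0$-th root of unity, equals $2t_0\mathbf{1}[t_0\mid i]-2$ by orthogonality on $B_0\cong\ZZ_{t_0}$. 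Subtracting gives $-2t_0\mathbf{1}[t_0\mid i]$, and multiplication by $[G:\N_G(A_0)]=q^2(q^2+1)/2$ yields the stated formula. The cases $\chi=Y_j$ and $\chi=Z_k$ are formally identical, using $Y_j=-\epsilon_1^j$ and $Z_k=-\epsilon_2^k$ off the identity; the only new input is that $q$ is a power of two while $|A_1|,|A_2|$ are odd, so $\gcd(q,|A_m|)=1$, hence multiplication by $q$ permutes $B_m$ bijectively and the four terms $e^{\pm js},e^{\pm jqs}$ in $\epsilon_1^j$ each contribute the same indicator $\mathbf{1}[t_1\mid j]$.

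The step requiring most care is the TI assertion for $A_1$ and $A_2$, which is not stated explicitly in the paper and must be extracted from the maximal subgroup list. A secondary bookkeeping point is that the table identities $Y_j=-\epsilon_1^j$ and $Z_k=-\epsilon_2^k$ fail at the identity, so the $s=0$ term of the full cyclic sum must be separated off when restricting to $A_m\setminus\{1\}$; one should also check that the divisibility condition ``$t_m\mid j$'' is well--posed even though $Y_j$ is indexed only modulo the Galois orbit $\{\pm j,\pm qj\}$ on $\ZZ/|A_1|\ZZ$, which is exactly where the coprimality $\gcd(q,t_m)=1$ enters a second time.
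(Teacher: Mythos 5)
Your proposal is correct and follows essentially the same route as the paper: the TI property of the Hall subgroups $A_m$ (derived from the maximal subgroup structure) gives the decomposition $\sum_{g\in(A_m\setminus B_m)^G}\chi(g)=[G:\N_G(A_m)]\bigl(\sum_{A_m}\chi-\sum_{B_m}\chi\bigr)$, after which everything reduces to orthogonality of roots of unity on the cyclic groups $A_m$ and $B_m$. Your explicit handling of the $s=0$ term and of the coprimality $\gcd(q,|A_m|)=1$ makes precise two points the paper's proof passes over silently, but the argument is the same.
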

		
		\begin{proof}
			Note that if $g\in G$ such that $A_m\cap A_m^g\neq 1$, then $g\in \N_G(A_m\cap A_m^g)=\N_G(A_m)$.
			Thus we have that $A_m\cap A_m^g=1$ for $g\notin \N_G(A_m)$, and so for any irreducible character $\chi$ of $G$,
			$$\sum\limits_{g\in (A_m\backslash B_m)^G}\chi(g)=\frac{|G|}{|\N_G(A_m)|}\left(\sum_{g\in A_m}\chi(g)-\sum_{g\in B_m}\chi(g)\right).$$
			In particular, $|(A_m\backslash B_m)^G|=\sum\limits_{g\in(A_m\backslash B_m)^G}\mathbf{1}(g)=\frac{|G|}{\N_G(A_m)}(|A_m|-|B_m|)$, as stated.
			
			Since $A_0=\langle \zeta_0 \rangle=\ZZ_{t_0}$, we have that $B_0=\langle \zeta_0^{\frac{q-1}{t_0}} \rangle.$
			Thus 
			\[\sum\limits_{g\in B_0}X_i(g)=\left\{
			\begin{array}{ll}
				\mbox{$\sum\limits_{s=1}^{t_0}e_0^{i\frac{q-1}{t_0}s}+\sum\limits_{s=1}^{t_0}e_0^{-i\frac{q-1}{t_0}s}=0$, yielding $\sum\limits_{g\in A_0}X_i(g)=0$,} &\mbox{if $t_0$ does not divide $i$,}\\
				\\
				\sum\limits_{g\in B_0}X_i(g)=2t_0, &\mbox{if $t_0$ divides $i$.}
			\end{array}
			\right.\]
			 Therefore, 
			 $$\sum\limits_{g\in (A_0\backslash B_0)^G}X_i(g)=-\frac{|G|}{|\N_G(A_0)|}\sum_{g\in B_0}X_i(g)=\left\{\begin{array}{ll}
			 	0 & \mbox{if $t_0$ does not divide $i$}\\
			 	-q^2(q^2+1)t_0 & \mbox{if $t_0$ divides $i$}
			 \end{array}  \right. ,$$ as stated in this lemma. 
			
			Arguing similarly shows that the value of $\sum\limits_{g\in (A_1\backslash B_1)^G}Y_j(g)$ and the value of $\sum\limits_{g\in (A_2\backslash B_2)^G}Z_k(g)$ are as stated, completing the proof.
		\end{proof}
		
		 To prove Theorem~\ref{thm:Sz-EKR}, we deal with different actions of $G$ in the following lemmas.

	\subsection{$G_\o\le\D_{2(q-1)}$}\ 
	
	\begin{lemma}\label{lem:d2q-1}
		Assume that $G_\omega\le\D_{2(q-1)}.$
		Then $\rho(G/\Omega)<\frac{\sqrt{2}}{2}.$
		In particular, if $|G_\omega|=2$ then $\rho(G,\Omega)=\frac{q}{2},$ and if $G_\omega \cong \D_{2(q-1)},$ then $\rho(G,\Omega)\leqslant \frac{q^2}{4}.$
	\end{lemma}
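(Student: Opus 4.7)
The approach is to apply the Delsarte--Hoffman bound (Lemma~\ref{lem:hoff}) to a weighted adjacency matrix of the derangement graph $\Gamma_{G,\Omega}$, using the spectral framework of Lemmas~\ref{lem:specm}, \ref{eign}, and \ref{e:sumchar}. The plan breaks into four steps.

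First, we unpack the structure of $G_\omega \le \D_{2(q-1)}$. Since $q$ is even, $q-1$ is odd, so every subgroup of $\D_{2(q-1)}$ is either cyclic of order $t \mid 2(q-1)$ or dihedral $\D_{2t}$ for some $t \mid q-1$. Up to $G$-conjugacy, the odd cyclic part of $G_\omega$ has the form $B_0 \le A_0$ of some order $t_0 \mid q-1$; the Hall property forces $B_1 = B_2 = 1$, since $|A_1|$ and $|A_2|$ are coprime to $2(q-1)$. Consequently, the non-identity derangement classes are exactly $(A_0\setminus B_0)^G$, $(A_1\setminus 1)^G$, $(A_2\setminus 1)^G$, and $\rho^G\cup(\rho^{-1})^G$ (since $\D_{2(q-1)}$ contains no element of order $4$), together with $(\rho^2)^G$ in the subcase where $|G_\omega|$ is odd.

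Second, we define a $(G,\Omega)$-compatible class function $f$ supported on these derangement classes, taking undetermined real values $f_4$ on $\rho^G\cup(\rho^{-1})^G$, $c_0$ on $(A_0\setminus B_0)^G$, $c_1$ on $(A_1\setminus 1)^G$, $c_2$ on $(A_2\setminus 1)^G$ (and an $f_2'$ on $(\rho^2)^G$ when applicable). By Lemma~\ref{lem:specm}, $M^f$ is compatible with $\Gamma_{G,\Omega}$, has constant row sum $d = \lambda_{\mathbf{1},f}$, and its full spectrum is $\{\lambda_{\chi,f} : \chi \in \mathrm{Irr}(G)\}$, computable in closed form by substituting Table~\ref{tab:Szcha} and Lemma~\ref{e:sumchar} (with $t_1 = t_2 = 1$) into Lemma~\ref{eign}. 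Because $f(\rho) = f(\rho^{-1})$, the imaginary values $\pm r\sqrt{-1}/2$ in the $W_\ell$-column cancel, so all $\lambda_{\chi,f}$ are real.

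Third, setting $\tau = \min_\chi \lambda_{\chi,f}$, Lemma~\ref{lem:hoff} gives $|S| \le |G|/(1 - d/\tau)$. We then tune the weights $(f_4, c_0, c_1, c_2)$ so that this bound, divided by $|G_\omega|\sqrt{|\Omega|} = \sqrt{|G_\omega||G|}$, yields $\rho(G/\Omega) < \sqrt{2}/2$ uniformly in $q$ and in $t_0 \mid q-1$. To confirm sharpness in the two stated regimes, we exhibit explicit matching intersecting sets: when $|G_\omega| = 2$, the elementary abelian centre $Z(Q) \cong \ZZ_2^e$ of a Sylow $2$-subgroup $Q$ (of order $q$, as recorded in Lemma~\ref{lem:Sz-basic-pty}) is itself an intersecting set, since every pairwise ratio lies in $Z(Q)$ and is either identity or an involution, yielding $\rho(G,\Omega) = q/2$; when $G_\omega \cong \D_{2(q-1)}$, a parabolic-type subset provides an intersecting set of size $q^2(q-1)/2$, matching the Hoffman bound and giving $\rho(G,\Omega) \le q^2/4$.

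The main obstacle lies in the optimization of step three: identifying which irreducible character attains $\tau$ across all admissible values of $t_0$. Inspecting the formulae of Lemma~\ref{e:sumchar}, the competing candidates are $X$, the two $W_\ell$'s, and those $X_i, Y_j, Z_k$ whose index is divisible by $t_0$; deciding the extremal one requires a case split on the relative sizes of $c_0, c_1, c_2, f_4$, followed by a careful comparison of the resulting polynomial expressions in $q$. Once the correct character is isolated, the conclusion reduces to an elementary inequality in $q$ that holds for all $q = 2^e$ with $e \ge 3$ odd.
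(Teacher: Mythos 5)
Your overall framework (weighted Delsarte--Hoffman bound via Lemmas~\ref{lem:specm}, \ref{eign}, \ref{e:sumchar}) is the same as the paper's for the generic subcases, but there are concrete gaps in the places where the stated equalities and the sharp constant actually have to be earned.

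First, the subcase $|G_\omega|=2$. Exhibiting $Z(Q)$ as an intersecting set of size $q$ only gives the lower bound $\rho(G,\Omega)\geqslant q/2$; it does not ``yield $\rho(G,\Omega)=q/2$'' as you assert. The paper does \emph{not} close this case with a Hoffman bound at all: it invokes the structural result \cite[Lemma 3.1]{HKKM_cycstab} that any intersecting subset containing $1$ generates an elementary abelian group, hence lies in the centre of a Sylow $2$-subgroup, which is what forces $|S|\leqslant q$. It is far from clear that any choice of weights in your scheme produces a ratio bound as strong as $|S|\leqslant q$ (this would require $1-d/\tau = |G|/q$), so this equality is a genuine missing ingredient in your plan.

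Second, the subcase $G_\omega\cong\D_{2(q-1)}$. Your claimed ``parabolic-type subset of size $q^2(q-1)/2$ matching the Hoffman bound'' appears to be false: the Borel subgroup $Q{:}\ZZ_{q-1}$ contains $q^2-q$ elements of order $4$ per Sylow $2$-subgroup, and $\D_{2(q-1)}$ has no element of order $4$, so those elements are derangements and no subset of the Borel of that size is obviously intersecting. The paper accordingly proves only the inequality $\rho(G,\Omega)\leqslant q^2/4$ (via the weighted function $f=1$ on $\rho^G\cup(\rho^{-1})^G$, $f=\frac{2}{r}+\frac{2}{q}$ on $(A_1\setminus 1)^G$, $f=-\frac{2}{r}+\frac{2}{q}$ on $(A_2\setminus 1)^G$), with no matching construction.

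Third, the optimization you defer to ``tuning'' is where the content lies, and the paper's resolution of the generic case $1<t_0<q-1$ does not come from clever weights at all: it uses the plain adjacency matrix and closes the final inequality using the arithmetic fact that $t_0\geqslant 7$, because neither $3$ nor $5$ divides $2^e-1$ when $e$ is odd. Without this observation (or an explicit successful weighting), your final ``elementary inequality in $q$'' does not follow; for small $t_0$ the unweighted bound would be too weak.
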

	
	\begin{proof}
		Clearly, either $G_\omega \cong \D_{2t_0}$ for some $t_0$ dividing $q-1$, or $G_\omega\cong \ZZ_{t_0}$ for some $t_0$ dividing $q-1$.
		We shall deal with these two cases separately.

		\subsubsection*{Case \rm(i). $G_\omega=\D_{2t_0}$ with $t_0>1$.}\
		
		Let $B_0$ be the cyclic subgroup of order $t_0$ of $A_0.$
		Then
		$$D(G,\Omega)=\rho^G \cup (\rho^{-1})^G \cup (A_0\backslash B_0)^G \cup (A_1\backslash 1)^G \cup (A_2\backslash 1)^G.$$
		Note that $1<t_0<q-1$, or $t_0\in \{1, q-1\}.$ 
		We deal with these three cases separately.
		
		Assume first $1<t_0<q-1.$
		We take $f$ to be the $(G,\Omega)$-compatible class function such that $f(g)=1$ for $g\in D(G,\Omega)$, and $f(g)=0$ for $g\notin D(G,\Omega),$ then corresponding matrix $M^f$ is the adjacency matrix of $\Gamma_{G,\Omega}$.
		By Lemma~\ref{eign} and Lemma~\ref{e:sumchar}, the spectrum of $M^f$ is 
		\begin{equation*}
			\begin{aligned}
				&\lambda_{\mathbf{1},f}=q^5-\frac{1}{2}q^4t_0-\frac{1}{2}q^4-\frac{1}{2}q^2t_0+\frac{1}{2}q^2-q,\\   &\lambda_{X,f}=-\frac{1}{2}q^2t_0+\frac{1}{2}q^2-\frac{t_0}{2}-\frac{1}{2},\\
				&\lambda_{X_i,f}=\left\{\begin{array}{ll}
					q^2-q & \mbox{if $t_0$ does not divide $i$}\\
					-q^2t_0+q^2-q & \mbox{if $t_0$ divides $i$}
				\end{array}\right.,   \\
			    & \lambda_{Y_j,f}= -qr-q,\\
				& \lambda_{Z_k,f}=qr-q,  \\
			    & \lambda_{W_\ell,f}=q^2,
			\end{aligned}
		\end{equation*}
		where $1\leqslant i\leqslant \frac{q^2}{2}, 1\leqslant j \leqslant \frac{q+r}{4},$ $1\leqslant k \leqslant \frac{q-r}{4},$ and $\ell \in \{1,2\}.$
		
		In particular, the row sum of $M^f$ is $d=|D(G,\Omega)|=q^5-\frac{1}{2}q^4t_0-\frac{1}{2}q^4-\frac{1}{2}q^2t_0+\frac{q^2}{2}-q,$ and the minimal eigenvalue $\tau$ of $M^f$ is
		$-q^2t_0+q^2-q.$
		Thus, $1-\frac{d}{\tau}=\frac{q^4-\frac{q^3}{2}(1+t_0)+\frac{q}{2}(t_0-1)}{q(t_0-1)+1},$ and hence
		\begin{equation*}
			\begin{aligned}
				&\frac{1}{\sqrt{2}}\rho(G/\Omega)=\text{max}\{\frac{|S|}{|G_\omega|}/\sqrt{\frac{|\Omega|}{2}}\}=\text{max}\{|S|\sqrt{\frac{2}{|G||G_\omega|}}\}\leqslant \sqrt{\frac{2|G|}{|G_\omega|}}/(1-\frac{d}{\tau})\\
				&=\sqrt{\frac{q^2(q-1)(q^2+1)}{t_0}}\frac{q(t_0-1)+1}{q^4-\frac{q^3}{2}(1+t_0)+\frac{q}{2}(t_0-1)}\\
				&<\frac{q^{\frac{5}{2}}(q(t_0-1)+1)}{\sqrt{t_0}(q^4-\frac{q^3}{2}(1+t_0)+\frac{q}{2}(t_0-1))}\\
				&<\frac{q(t_0-1)+1}{\sqrt{qt_0}(q-\frac{t_0}{2}-1)},
			\end{aligned}
		\end{equation*}
		where the first inequality is by Lemma~\ref{lem:hoff}.
		Since $t_0\geqslant 7$ ($t_0\geqslant 7$ because both $3$ and $5$ are not divisors of $q-1$) and $1<t_0<q-1$, it follows that $\frac{1}{\sqrt{2}}\rho(G/\Omega)<\frac{t_0(q-1)}{\sqrt{7t_0^2}\frac{13}{14}(q-1)}<1.$
		This completes the case $t_0$ divides $q-1$ and $1<t_0<q-1.$
		
		Assume next $t_0=q-1$ and $G_\omega\cong \D_{2(q-1)}.$
		Then
		\begin{equation*}
			D(G,\Omega)=\rho^G\cup (\rho^{-1})^G \cup (A_1\backslash 1)^G \cup (A_2\backslash 1)^G.
		\end{equation*}
		We take $f$ to be a $(G,\Omega)$-compatible class function such that $f(g)=1$ for $g\in \rho^G\cup (\rho^{-1})^G,$ $f(g)=\frac{2}{r}+\frac{2}{q}$ for $g\in (A_1\backslash 1)^G,$ $f(g)=-\frac{2}{r}+\frac{2}{q}$ for $g\in (A_2\backslash 1)^G$, and $f(g)=0$ for $g\notin D(G,\Omega).$
		Then by Lemma~\ref{eign} and \ref{e:sumchar}, the spectrum of $M^f$ is
		\begin{equation*}
			\begin{aligned}
				& \lambda_{\mathbf{1},f}=2q^4-2q^3+q^2-q, \\
				& \lambda_{X,f}=\lambda_{Y_j,f}=\lambda_{Z_k,f}=-q^2+q,\\ 
				& \lambda_{X_i,f}=q^2-q, \\
				& \lambda_{W_\ell,f}=q^3-q^2+2q, \\
			\end{aligned}
		\end{equation*}
		where $1\leqslant i\leqslant \frac{q^2}{2}, 1\leqslant j \leqslant \frac{q+r}{4},$ $1\leqslant k \leqslant \frac{q-r}{4},$ and $\ell \in \{1,2\}.$
		In particular, the row sum of $M^f$ is $d=2q^4-2q^3+q^2-q,$ and the minimal eigenvalue of $M^f$ is $\tau=-q^2+q.$
		Thus, for any intersecting subset $S$ of $G$,
		\begin{equation*}
			|S|\leqslant \frac{|G|}{1-\frac{d}{\tau}}=\frac{q^2(q-1)(q^2+1)}{1+\frac{2q^4-2q^3+q^2-q}{q(q-1)}}=\frac{q^2(q-1)}{2}
		\end{equation*}
		by Lemma~\ref{lem:hoff}.
		It follows that
		\begin{equation*}
			\rho(G,\Omega)=\frac{|S|}{|G_\omega|}\leqslant \frac{q^2}{4}<\frac{q\sqrt{q^2+1}}{2}=\sqrt{\frac{|\Omega|}{2}},
		\end{equation*}
		and hence $\rho(G/\Omega)<\frac{\sqrt{2}}{2}.$
		This completes the case $t_0=q-1$ and $G_\omega=\D_{2(q-1)}$.
		
		Finally, we assume $t_0=1$ and $G_\omega\cong \ZZ_2.$
		It is clear that the center of any Sylow $2$-subgroup of $G$ is an intersecting subset (see Lemma~\ref{lem:Sz-basic-pty} for the structure of Sylow $2$-subgroup of $G$).
		On the other hand, for any intersecting subset $S\subseteq G$ containing $1,$ the subgroup generated by $S$ is elementary-abelian by \cite[Lemma 3.1]{HKKM_cycstab}.
		Thus, $S$ is contained in the center of some Sylow $2$-subgroup of $G$.
		Therefore, the maximum intersecting subsets of $G$ containing $1$ are the centers of some Sylow $2$-subgroup, and hence $\rho(G,\Omega)=\frac{q}{2}.$
		
		This completes the case $G_\omega=\D_{2t_0}$.
		
		\subsubsection*{Case \rm(ii): $G_\omega\cong \ZZ_{t_0}$}
		Let $B_0$ be a subgroup of $A_0$ of order $t_0.$
		Then it is clear that
		\begin{equation*}
			D(G,\Omega)=(\rho^2)^G\cup \rho^G \cup (\rho^{-1})^G\cup (A_0\backslash B_0)^G \cup (A_1\backslash 1)^G \cup (A_2\backslash 1)^G.
		\end{equation*}
		Similarly as Case \rm(i), we deal with the cases $1<t_0<q-1$ and $t_0=q-1$ separately.
		
		Assume first $1<t_0<q-1$.
		We take $f$ to be the $(G,\Omega)$-compatible function such that $f(g)=1$ for $g\in D(G,\Omega),$ and $f(g)=0$ for $g\notin D(G,\Omega),$  then $M^f$ is the adjacency matrix of the derangement graph of this action.
		By Lemma~\ref{eign} and Lemma~\ref{e:sumchar}, the spectrum of $M^f$ is
		\begin{equation*}
			\begin{aligned}
				&\lambda_{\mathbf{1},f}=q^5-\frac{q^4}{2}t_0-\frac{q^4}{2}+q^3-\frac{q^2}{2}t_0-\frac{q^2}{2}-1,\\
				&\lambda_{X,f}=-\frac{q^2}{2}t_0+\frac{q^2}{2}-\frac{t_0}{2}-\frac{1}{2},\\	
				&\lambda_{X_i,f}=\left\{\begin{array}{ll}
					q^2-1 & \text{if $t_0$ does not divide $i$} \\
					-q^2t_0+q^2-1 & \text{if $t_0$ divides $i$}
				\end{array}
				\right. , \\
				&\lambda_{Y_j,f}=\lambda_{Z_k,f}=\lambda_{W_\ell,f}=-1,
			\end{aligned}\\
		\end{equation*}
		where $1\leqslant i\leqslant \frac{q^2}{2}, 1\leqslant j \leqslant \frac{q+r}{4},$ $1\leqslant k \leqslant \frac{q-r}{4},$ and $\ell \in \{1,2\}.$
		In particular, the row sum of $M^f$ is $d=|D(G, \Omega)|=q^5-\frac{q^4}{2}t_0-\frac{q^4}{2}+q^3-\frac{q^2}{2}t_0-\frac{q^2}{2}-1,$ and the minimal eigenvalue of $M^f$ is $\tau=-q^2t_0+q^2-1.$
		Thus,
		\begin{equation*}
			\begin{aligned}
				1-\frac{d}{\tau}&=1+\frac{q^4(q-\frac{t_0+1}{2})+q^2(q-\frac{t_0+1}{2})-1}{q^2(t_0-1)+1}=1+\frac{q^2(q^2+1)(q-\frac{t_0+1}{2})-1}{q^2(t_0-1)+1}\\
				&=\frac{q^2(q^2+1)(q-\frac{t_0+1}{2})+q^2(t_0-1)}{q^2t_0-q^2+1}> \frac{q^2(q^2+1)(q-\frac{t_0+1}{2})}{q^2t_0}=\frac{(q^2+1)(q-\frac{t_0+1}{2})}{t_0}.
			\end{aligned}
		\end{equation*}
		Since $|S|\leqslant \frac{|G|}{1-\frac{d}{\tau}}$ by Lemma~\ref{lem:hoff},
		it follows that
		\begin{equation*}
			\begin{aligned}
				\frac{1}{\sqrt{2}}\rho(G/\Omega)&=\text{max}\{\frac{|S|}{|G_\omega|}\sqrt{\frac{|\Omega|}{2}}\}=\text{max}\{|S|\sqrt{\frac{2}{|G||G_\omega|}} \}\leqslant \sqrt{\frac{2|G|}{|G_\omega|}}/(1-\frac{d}{\tau})\\
				&<\sqrt{\frac{2q^2(q-1)(q^2+1)}{t_0}}\frac{t_0}{(q^2+1)(q-\frac{t_0+1}{2})}\\
				&<\frac{q^2\sqrt{2qt_0}}{(q^2+1)(q-\frac{t_0+1}{2})}.
			\end{aligned}
		\end{equation*}
		On the other hand, $t_0\leqslant \frac{q-1}{7}$ since $3$ and $5$ are not divisor of $q-1$ and $1<t_0<q-1.$
		Thus, $$\frac{1}{\sqrt{2}}\rho(G/\Omega)<\frac{q^2}{q^2+1}\frac{\sqrt{2q(q-1)/7}}{\frac{13}{14}q-\frac{3}{7}}<\frac{\sqrt{2/7}}{6/7}<1,$$ and hence $\rho(G/\Omega)<\frac{\sqrt{2}}{2}.$
		
		We next assume $t_0=q-1$ and $G_\omega\cong \ZZ_{q-1}.$
		Then
		\begin{equation*}
			D(G,\Omega)=(\rho^2)^G \cup \rho^G \cup (\rho^{-1})^G \cup (A_1\backslash 1)^G \cup (A_2\backslash 1)^G.
		\end{equation*}
		We take the $(G,\Omega)$-compatible function $f$ as follows: $f(g)=1$ for $g\in \rho^2\cup \rho^G \cup (\rho^{-1})^G$,  $f(g)= \frac{2}{q}$ for $g\in (A_1\backslash 1)^G \cup (A_2\backslash 1)^G$, and $f(g)=0$ for $g\notin D(G,\Omega).$
		By Lemma~\ref{eign} and Lemma~\ref{lem:specm}, the spectrum of the $\Gamma_{G,\Omega}$-compatible matrix $M^f$ is
		\begin{equation*}
			\begin{aligned}
				&\lambda_{\mathbf{1},f}=2q^4-2q^3+q^2-1, \\ &\lambda_{X_i}=q^2-1, \\
				&\lambda_{X}=\lambda_{Y_j}=\lambda_{Z_k}=\lambda_{W_\ell}=-(q-1)^2,
			\end{aligned}
		\end{equation*}
		where $1\leqslant i \leqslant \frac{q}{2}-1, 1\leqslant j \leqslant \frac{q+r}{4}, 1\leqslant k \leqslant \frac{q-r}{4},$ and $\ell\in \{1,2\}.$
		In particular, the row sum of $M^f$ is $d=2q^4-2q^3+q^2-1$ and the minimal eigenvalue of $M^f$ is $\tau=-(q-1)^2.$
		Thus, for any intersecting subset $S\subseteq G,$
		\begin{equation*}
			|S|\leqslant \frac{|G|}{1-\frac{d}{\tau}}=
			\frac{q^2(q-1)(q^2+1)}{1+\frac{2q^4-2q^3+q^2-1}{(q-1)^2}}=\frac{q(q-1)^2}{2},
		\end{equation*}
		by Hoffman's bound (Lemma~\ref{lem:hoff}).
		Therefore, $$\rho(G,\Omega)=\frac{|S|}{|G_\omega|}\leqslant \frac{q(q-1)}{2}<\frac{q\sqrt{q^2+1}}{\sqrt{2}}=\sqrt{\frac{|\Omega|}{2}},$$ and hence $\rho(G/\Omega)<\frac{\sqrt{2}}{2}$
		This completes the case $G_\omega=\ZZ_{q-1}.$
		
		This completes the proof.
	\end{proof}
	
	\subsection{$G_\o\leqslant \ZZ_2^e.\ZZ_2^e{:}\ZZ_{q-1}$}\ 
	
	\begin{lemma}\label{subBorel}
		Assume $G_\omega$ normalizes some Sylow $2$-subgroup of $G.$
		Then $\rho(G/\Omega)< \frac{\sqrt{2}}{2}.$
		In particular, the following holds
		\begin{itemize}
			\item [\rm(i)] \cite[Proposition 4.1]{MST2016} If $G_\omega$ is a maximal subgroup of $G,$ then $\rho(G,\Omega)=1;$
			\item [\rm(ii)] If $G_\omega$ is a $2$-group with exponent $4$, then  $\rho(G,\Omega)=\frac{q^2}{|G_\omega|}$;
			\item [\rm(iii)] If $G_\omega$ is an elementary-abelian $2$-group, then $\rho(G,\Omega)=\frac{q}{|G_\omega|}$;
			\item [\rm(iv)] If $G_\omega\cong \ZZ_2^e{:}\ZZ_{q-1},$ then $\rho(G,\Omega)\leqslant \frac{q}{2}$;
		\end{itemize}
		
	\end{lemma}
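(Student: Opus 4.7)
The strategy mirrors Lemma~\ref{lem:d2q-1}: I would case-split on the shape of $G_\omega$ inside the Borel $\N_G(Q) = QK$, identify which $G$-conjugacy classes form the derangement set $D(G,\Omega)$, exhibit a near-optimal intersecting subset, and match it against a Delsarte--Hoffman upper bound (Lemma~\ref{lem:hoff}) produced by a well-chosen $(G,\Omega)$-compatible class function whose spectrum is computed via Lemma~\ref{eign} and Lemma~\ref{e:sumchar}.

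First I would pin down the derangements. Since $Q\cap Q^g = 1$ for $g\notin \N_G(Q)$ (Lemma~\ref{lem:Sz-basic-pty}), and since $A_0,A_1,A_2$ are pairwise coprime Hall subgroups, the classes $(A_1\setminus 1)^G$ and $(A_2\setminus 1)^G$ always lie in $D(G,\Omega)$, because $QK$ contains no element of order dividing $q\pm r+1$. Whether $(\rho^2)^G$, $\rho^G\cup(\rho^{-1})^G$, or $(A_0\setminus B_0)^G$ lies in $D(G,\Omega)$ is then read off from whether $G_\omega$ meets $Z(Q)$, contains an order-$4$ element, and has nontrivial image in $K$ (with $B_0\le A_0$ determined by that image). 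Note that in a $2$-group each order-$4$ element $x$ satisfies $x^{-1}\in(\rho^{-1})^G$ whenever $x\in\rho^G$, so the presence of any order-$4$ element in $G_\omega$ already forces both $\rho^G$ and $(\rho^{-1})^G$ into $G_\omega^G$.

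The four displayed items are then handled individually. Item~(i) is the $2$-transitive action of degree $q^2+1$ and reduces immediately to \cite[Proposition~4.1]{MST2016}. For item~(iii), Lemma~\ref{lem:Sz-basic-pty} forces every elementary-abelian $2$-subgroup into a conjugate of $Z(Q)$, so $(\rho^2)^G$ is the only $2$-class in $G_\omega^G$; by \cite[Lemma~3.1]{HKKM_cycstab} any intersecting subset containing $1$ generates an elementary-abelian subgroup, hence lies in a conjugate of $Z(Q)$, yielding $|S|=q$. For item~(ii), the Sylow $2$-subgroup $Q$ is itself an intersecting subset of size $q^2$ because every element of $Q$ is $G$-conjugate to some element of $G_\omega$, and a matching Hoffman bound with $f\equiv 1$ on $D(G,\Omega)$ closes the case. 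For item~(iv), $D(G,\Omega) = \rho^G\cup(\rho^{-1})^G\cup(A_1\setminus 1)^G\cup(A_2\setminus 1)^G$, and I would reuse the weighted function from the $\D_{2(q-1)}$ branch of Lemma~\ref{lem:d2q-1} -- with weights $1$, $\tfrac{2}{r}+\tfrac{2}{q}$, and $-\tfrac{2}{r}+\tfrac{2}{q}$ on the respective classes -- to collapse the least eigenvalue to $\tau=-q(q-1)$, extracting $|S|\le q^2(q-1)/2$ and hence $\rho(G,\Omega)\le q/2$.

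The main technical hurdle is engineering, in each configuration, the $(G,\Omega)$-compatible weight function so that the least eigenvalue of $M^f$ is attained simultaneously across several character families; this is what makes the Hoffman bound sharp enough to recover the stated $|S|$. Once the four displayed bounds are in hand, the global inequality $\rho(G/\Omega)<\sqrt{2}/2$ reduces in each case to the elementary estimate $2|S|^2<|G||G_\omega|$, using $|G|=q^2(q-1)(q^2+1)$ and $q=2^e\ge 8$. The remaining shapes of $G_\omega\le QK$ -- for instance those with nontrivial $K$-part but not containing all of $Z(Q)$ -- are handled by the same weighting scheme, with $B_0$ replaced by the image of $G_\omega$ in $K$, yielding bounds analogous to those in the $G_\omega=\ZZ_{t_0}$ and $G_\omega=\D_{2t_0}$ branches of Lemma~\ref{lem:d2q-1}.
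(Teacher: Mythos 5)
Your overall architecture matches the paper's: split on the shape of $G_\omega\le QK$, read off $D(G,\Omega)$ from which conjugacy classes meet $G_\omega$, and play a weighted Delsarte--Hoffman bound (Lemmas~\ref{lem:hoff}, \ref{eign}, \ref{e:sumchar}) against an explicit intersecting subset. Your handling of items (i), (iii), (iv) and of the subcase where $G_\omega$ contains no element of order $4$ (reduction to Lemma~\ref{lem:d2q-1} via $D(G,\Omega)=D(G,[G:H])$ with $H\cong\D_{2t_0}$) is essentially the paper's. The genuine gap is in the configuration you dismiss in your last paragraph as ``analogous'': $G_\omega$ containing an element of order $4$ \emph{and} having nontrivial odd part of order $t_0>1$. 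There $D(G,\Omega)=(A_0\setminus B_0)^G\cup(A_1\setminus1)^G\cup(A_2\setminus1)^G$ contains none of the $2$-element classes, so the derangement graph is strictly sparser than in either branch of Lemma~\ref{lem:d2q-1}; the Hoffman bound degrades to roughly $|S|\lesssim (q-1)(q^2+1)t_0/\bigl(q-\tfrac{t_0}{2}\bigr)$, which for $t_0$ near $q-1$ is of order $q^3$. To beat $\sqrt{|G||G_\omega|/2}$ you then need $|G_\omega|$ large \emph{relative to} $t_0$: knowing only $|G_\omega|\ge 4t_0$ gives $\sqrt{|G||G_\omega|/2}\approx\sqrt{2}\,q^3$ when $t_0=q-1$, which does not dominate the Hoffman bound $\approx 2q^3$. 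The paper supplies the missing ingredient as a separate structural claim: $|G_\omega|\ge q_0^2t_0$, where $\FF_{q_0}=\FF_2[\lambda]$ for $\lambda$ of order $t_0$, proved by showing that $G_\omega\cap Z(Q)$ and $(G_\omega\cap Q)/(G_\omega\cap Z(Q))$ identify with additive subgroups of $\FF_q$ containing $1$ and closed under multiplication by $\lambda$, hence containing $\FF_{q_0}$. Your proposal has no counterpart to this lower bound on the $2$-part of $G_\omega$, and without it the case does not close.

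A second, smaller error concerns item (ii). With $G_\omega$ a $2$-group of exponent $4$ one has $D(G,\Omega)=(A_0\setminus1)^G\cup(A_1\setminus1)^G\cup(A_2\setminus1)^G$, and the unweighted choice $f\equiv1$ yields least eigenvalue $-q^2$ but row sum $q^5-2q^4+q^3-q^2$, hence only $|S|\le q(q^2+1)/(q-1)>q^2$; this does not match the clique $Q$ and so does not give the exact value $\rho(G,\Omega)=q^2/|G_\omega|$. One must reweight (the paper puts $1+\tfrac{2(q+1)}{q(q-1)}$ on $(A_1\setminus1)^G\cup(A_2\setminus1)^G$) so that the ratio bound collapses to exactly $q^2$. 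You correctly identify eigenvalue engineering as the technical crux in general, but at this specific point the weight you propose is the wrong one.
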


	\begin{proof}
		We divide the proof into two cases: $G_\omega$ contains an element of order $4$ and $G_\omega$ does not contain any element of order $4,$ and we deal with them separately.
		
		\subsubsection*{Case \rm(i): $G_\omega$ contains an element of order $4$}
		
		Assume $G_\omega$ has a Hall $2'$-subgroup of order $t_0.$
		We deal with the cases $t_0=1$ and $t_0>1,$ respectively.
		
		First assume $t_0>1.$
		Let $\lambda$ be an element in $\mathbb{F}_q^\times$ of order $t_0$, and assume $\mathbb{F}_2[\lambda]=\mathbb{F}_{q_0}.$
		We next claim that $|G_\omega|\geqslant q_0^2t_0.$
		To see this, recall that the maximal subgroup of $G$ containing $G_\omega$ can be identified with $QK,$ where 
		\begin{equation*}
			\begin{aligned}	
				Q=\{(\alpha,\beta)\mid \alpha,\beta\in \mathbb{F}_q, (\alpha,\beta)(\alpha_1,\beta_1)&=(\alpha+\alpha_1,\alpha\alpha_1^\theta+\beta+\beta_1)\}\cong \ZZ_2^e.\ZZ_2^e;\\
				K=\{(\kappa)\mid \kappa\in\mathbb{F}_q^\times\}\cong \mathbb{F}_q^\times; & \quad (\alpha,\beta)^\kappa=(\alpha\kappa, \beta\kappa^{1+\theta}),
			\end{aligned}
		\end{equation*}
	    by Lemma~\ref{lem:Sz-basic-pty}.
		We may further replace $G_\omega$ with $G_\omega^{g_1}$ for some $g_1$ in the maximal subgroup containing $G_\omega,$ and assume $(1,1)\in G_\omega,$ and hence $(0,1)=(1,1)^2\in G_\omega.$
		Note that $G_\omega\cap Z(Q)\cong \{\beta\in \mathbb{F}_q\mid (0,\beta)\in G_\omega\},$ and $$(G_\omega\cap Q)/(G_\omega\cap Z(Q))\cong \{\alpha \mid \text{$(\alpha,\beta')\in G_\omega$ for some $\beta'\in \mathbb{F}_q$}\}.$$
		Also note that there exists $\alpha_1,\beta_1 \in \mathbb{F}_q,$ such that $((\alpha_1,\beta_1),\lambda)$ generates a Hall $2'$-subgroup of $G_\omega,$ and
		$$(\alpha,\beta)^{((\alpha_1,\beta_1),\lambda)}=(\lambda\alpha,\lambda^{1+\theta}(\beta+\alpha_1\alpha^\theta+\alpha\alpha_1^\theta)(\alpha_1^\theta+1)), \quad  (0,\beta)^{((\alpha_1,\beta_1),\lambda)}=(0,\beta\lambda^{1+\theta}).$$        	
		It follows that $G_\omega\cap Z(Q)$ and $(G_\omega\cap Q)/(G_\omega\cap Z(Q))$ can be identified two additive subgroups $E,E_1$ of $\mathbb{F}_q,$ such that both $E$ and $E_1$ contain $1,$ and they are closed under the multiplication of $\lambda.$
		Thus, $\mathbb{F}_2[\lambda]=\mathbb{F}_{q_0}\subseteq E\cap E_1,$ and hence $G_\omega\geqslant q_0^2.$ This completes the claim.
		
		We next assign a $(G,\Omega)$-compatible function $f,$ and use the minimal eigenvalue of $M^f$ and the Hoffman-bound to prove $\rho(G/\Omega)<\frac{\sqrt{2}}{2}.$
		Let $B_0$ be the unique  subgroup of $A_0$ of order $t_0.$
		Then $$D(G,\Omega)=(A_0\backslash B_0)^G \cup (A_1\backslash 1)^G \cup (A_2\backslash 1)^G.$$
		We take $f$ such that $f(g)=1$ for $g\in D(G,\Omega)$ and $f(g)=0$ for $g\notin D(G,\Omega)$, then $M^f$ is the adjacency matrix of $\Gamma_{G,\Omega}.$
		By Lemma~\ref{eign} and Lemma~\ref{lem:hoff}, the spectrum of $M^f$ is
		\begin{equation*}
			\begin{aligned}
				&\lambda_{\mathbf{1},f}=q^5-\frac{q^4}{2}t_0-\frac{3}{2}q^4+q^3-\frac{q^2}{2}t_0-\frac{q^2}{2}, \\
				& \lambda_{X,f}=-\frac{q^2}{2}t_0+\frac{q^2}{2}-\frac{t_0}{2}-\frac{1}{2},\\
				&\lambda_{X_i,f}=\left\{ \begin{array}{ll}
					0 & \quad \mbox{if $t_0$ does not divide $i$}\\
					-q^2t_0 & \quad \mbox{if $t_0$ divides $i$}
				\end{array}\right., \\
			    & \lambda_{Y_j,f}=\lambda_{Z_k,f}=\lambda_{W_\ell,f}=q^2, \\
			\end{aligned}
		\end{equation*}
		where $1\leqslant i\leqslant \frac{q^2}{2}, 1\leqslant j \leqslant \frac{q+r}{4},$ $1\leqslant k \leqslant \frac{q-r}{4},$ and $\ell \in \{1,2\}.$
		In particular, the row sum of $M^f$ is $d=|D(G,\Omega)|=q^5-\frac{q^4}{2}t_0-\frac{3}{2}q^4+q^3-\frac{q^2}{2}t_0-\frac{q^2}{2},$ and the minimal eigenvalue of $M^f$ is $\tau=-q^2t_0.$
		Also note that $\frac{q}{t_0}> \frac{q}{q_0}=q_0^{r-1},$
		and hence  $\frac{(q-\frac{t_0}{2}-\frac{1}{2})}{t_0}>\frac{(q-\frac{t_0}{2}-\frac{3}{2})}{t_0}>q_0^{r-1}-1.$
		It follows that
		\begin{equation*}
			\begin{aligned}
				1-\frac{d}{\tau}&=1+\frac{q^4(q-\frac{t_0}{2}-\frac{3}{2})+q^2(q-\frac{t_0}{2}-\frac{1}{2})}{q^2t_0}\\> &1+q^2(q_0^{r-1}-1)+(q_0^{r-1}-1)>(q^2+1)(q_0^{r-1}-1).
			\end{aligned}
		\end{equation*}
		Thus, by Hoffman bound (Lemma~\ref{lem:hoff}), for any intersecting subset $S,$
		\begin{equation*}
			|S|\leqslant \frac{|G|}{1-\frac{d}{\tau}}<\frac{q^2(q-1)(q^2+1)}{(q^2+1)(q_0^{r-1}-1)}=\frac{q^2(q-1)}{q_0^{r-1}-1}.
		\end{equation*}
		On the other hand, $|G_\omega|\geqslant q_0^2t_0$ by the second paragraph.
		It follows that
		\begin{equation*}
			\frac{1}{\sqrt{2}}\rho(G/\Omega)=\text{max}\{|S|\sqrt{\frac{2}{|G||G_\omega|}}\}<\sqrt{\frac{2}{q^2(q-1)(q^2+1)q_0^2t_0}}\frac{q^2(q-1)}{q_0^{r-1}-1}<1,
		\end{equation*}
		and hence $\rho(G/\Omega)<\frac{\sqrt{2}}{2}.$
		This completes the case $t_0>1.$
		
		It remains to consider $t_0=1$ and $G_\omega$ is a $2$-group.
		Then
		\begin{equation*}
			D(G,\Omega)=(A_0\backslash 1)^G\cup (A_1\backslash 1)^G\cup (A_2\backslash 1)^G.
		\end{equation*}
		We take $f$ to be the $(G,\Omega)$-compatible function such that $f(g)=1$ for $g\in (A_0\backslash 1)^G,$ and $f(g)=1+\frac{2(q+1)}{q(q-1)}$ for $g\in (A_1\backslash 1)^G \cup (A_2\backslash 1)^G,$ and $f(g)=0$ for remaining elements in $G.$
		By Lemma~\ref{eign} and Lemma~\ref{lem:hoff}, the spectrum of $M^f$ is
		\begin{equation*}
			\begin{aligned}
			&\lambda_{\bf{1},f}=q^5-q^4+q^3-2q^2, \\ &\lambda_{X,f}=\lambda_{X_i,f}=-q^2,  \\ &\lambda_{Y_j,f}=\lambda_{Z_k,f}=\lambda_{W_\ell,f}= \frac{q^3+q^2+2q}{q-1}, \\
			\end{aligned}
		\end{equation*}
		where $1\leqslant i \leqslant \frac{q}{2}-1, 1\leqslant j \leqslant \frac{q+r}{4}, 1\leqslant k \leqslant \frac{q-r}{4},$ and $\ell\in \{1,2\}.$
		In particular, the row sum of $M^f$ is $d=q^5-q^4+q^3-2q^2$, and the minimal eigenvalue $\tau$ is $-q^2.$ Therefore, for any intersecting subset $S\subseteq G,$ we have
		\begin{equation*}
			|S|\leqslant \frac{|G|}{1-\frac{d}{\tau}}=\frac{q^2(q-1)(q^2+1)}{1+\frac{q^5-q^4+q^3-2q^2}{q^2}}=q^2.
		\end{equation*}
		On the other hand, it is clear that any Sylow $2$-subgroup of $G$ is an intersecting subset with order $q^2.$ Thus, Sylow $2$-subgroups of $G$ are maximum intersecting subsets, and hence \rm(ii) of this lemma holds.
		This completes the case $G_\omega$ contains an element of order $4.$
		
		\subsubsection*{Case \rm(ii)}
		$G_\omega$ does not contain any element of order $4$.
		
		If $|G_\omega|$ is odd, then $G_\omega\cong \ZZ_{t_0},$ and this is just Case~\rm(ii) of Lemma~\ref{lem:d2q-1}.
		We thus may assume $|G_\omega|$ is even, then it is not difficult to see that
		\begin{equation*}
			D(G,\Omega)=\rho^G \cup (\rho^{-1})^G \cup (A_0\backslash B_0)^G \cup (A_1 \backslash 1)^G \cup (A_2 \backslash 1)^G.
		\end{equation*}
		Let $H$ be a subgroup of $G$ isomorphic to $\D_{2t_0}.$
		Then $D(G,\Omega)=D(G, [G:H]),$ and hence any intersecting subset $S$ of $G$ on $\Omega$ is also an intersecting subset of $G$ acting on $[G:H].$
		Therefore,
		\begin{equation*}
			\frac{1}{\sqrt{2}}\rho(G/\Omega)=\text{max}\{\frac{|S|}{|G_\omega|}/\sqrt{\frac{|\Omega|}{2}}\}=\text{max}\{|S|\sqrt{\frac{2}{|G||G_\omega|}}\}<\text{max}\{|S|\sqrt{\frac{2}{|G||H|}}\}<1
		\end{equation*}
		by the proof of Lemma~\ref{lem:d2q-1}.
		For the case when $G_\omega \cong \ZZ_{2^e}{:}\ZZ_{q-1}$ and $G_\omega$ is an elementary abelian $2$-group, we also derive \rm(iii) and \rm(iv) for the statement of this lemma from the proof of Lemma~\ref{lem:d2q-1}.
		This completes the case $G_\omega$ does not contain any element of order $4.$
		
		This completes the proof.
	\end{proof}

	\subsection{Subfield subgroups}\ 
	
	\begin{lemma}\label{szq1}
		If $G_\omega\cong \Sz(q_1),$ where $q=q_1^k$ for some integer $k$ and $q_1> 2$, then
		$|S|< 2q^2q_1$ for any intersecting subset $S\subseteq G,$ and $\rho(G/\Omega)<\frac{\sqrt{2}}{2}.$
	\end{lemma}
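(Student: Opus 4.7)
The plan is to follow the template established in Lemmas~\ref{lem:d2q-1} and~\ref{subBorel}: identify the derangement set $D(G,\Omega)$ precisely, construct an indicator-type $(G,\Omega)$-compatible class function $f$, compute the spectrum of $M^f$ via Lemma~\ref{eign} and Lemma~\ref{e:sumchar}, and then apply Hoffman's bound (Lemma~\ref{lem:hoff}) to extract both the numerical bound $|S|<2q^2q_1$ and the ratio bound $\rho(G/\Omega)<\sqrt{2}/2$.

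First I would pin down $D(G,\Omega)$. Since $q=q_1^k$ with $e$ odd and $q_1>2$, write $q_1=2^{e_1}$ with $e_1$ odd; then $k$ is odd, so $q_1-1\mid q-1$ and $q_1^2+1\mid q^2+1$. Hence the three cyclic Hall subgroups of $\Sz(q_1)$, of orders $q_1-1$, $q_1+r_1+1$, $q_1-r_1+1$ (with $r_1=2^{(e_1+1)/2}$), embed respectively into cyclic subgroups $B_0\le A_0$, $B_1\le A_{m_1}$, $B_2\le A_{m_2}$, where, after possibly relabelling, we may assume $\{m_1,m_2\}=\{1,2\}$. An element of $G$ is not a derangement precisely when it is $G$-conjugate into $G_\omega=\Sz(q_1)$, which by the class structure of $\Sz(q_1)$ and Lemma~\ref{lem:Sz-basic-pty} is the union of $\{1\}$, $(\rho^2)^G$, $\rho^G\cup(\rho^{-1})^G$, and $(B_0)^G\cup(B_1)^G\cup(B_2)^G$. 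Therefore
\[
D(G,\Omega)=(A_0\setminus B_0)^G\cup (A_1\setminus B_1)^G\cup(A_2\setminus B_2)^G.
\]

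Next, I would take $f$ to be the indicator of $D(G,\Omega)$, making $M^f$ the adjacency matrix of the derangement graph $\Gamma_{G,\Omega}$. Plugging $t_0=q_1-1$ and $\{t_1,t_2\}=\{q_1+r_1+1,q_1-r_1+1\}$ into Lemma~\ref{eign} combined with Lemma~\ref{e:sumchar} gives explicit formulas for each eigenvalue $\lambda_{\chi,f}$. The valence $d=|D(G,\Omega)|$ is of order $q^5$, and the candidates for the minimum negative eigenvalue $\tau$ come from the $X_i$ family with $t_0\mid i$ (contributing $-q^2t_0+\text{l.o.t.}$), from the $Y_j$ and $Z_k$ families with $t_1\mid j$ or $t_2\mid k$ (each comparable in magnitude to $q^2 t_m$), and from the $W_\ell$ family (bounded). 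One verifies that $\tau$ has magnitude $\Theta(q^2 q_1)$. Hoffman's inequality $|S|\le |G|/(1-d/\tau)$ then simplifies, using $|G|=q^2(q-1)(q^2+1)$, to the stated bound $|S|<2q^2q_1$.

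For the ratio, since $|G_\omega|=q_1^2(q_1-1)(q_1^2+1)$ and $|\Omega|=|G|/|G_\omega|$, one has
\[
\rho(G/\Omega)=\frac{|S|}{|G_\omega|\sqrt{|\Omega|}}<\frac{2q^2q_1}{\sqrt{|G|\cdot|G_\omega|}}.
\]
Because $k\ge 3$ forces $q\ge q_1^3\ge 512$, a direct estimate of the right-hand side (where the denominator grows like $q^{5/2}q_1^{5/2}$) shows it is far below $\sqrt{2}/2$. The principal obstacle will be the spectral analysis: one must sort through the several candidate negative eigenvalues of $M^f$ (in particular comparing the contributions from the $X_i$, $Y_j$, and $Z_k$ families whose weights depend on the divisibility of $t_0,t_1,t_2$), confirm which one is actually extremal for the parameters at hand, and then carry out the Hoffman arithmetic cleanly enough to yield the uniform estimate $|S|<2q^2q_1$ rather than a weaker implicit bound.
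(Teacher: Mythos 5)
Your overall framework (compute $D(G,\Omega)$, build a $(G,\Omega)$-compatible class function, apply Lemmas~\ref{eign}, \ref{e:sumchar} and the Delsarte--Hoffman bound) matches the paper, and your identification of $D(G,\Omega)=(A_0\setminus B_0)^G\cup(A_1\setminus B_1)^G\cup(A_2\setminus B_2)^G$ is correct. The gap is in your choice of $f$ and the ensuing claim that the least eigenvalue $\tau$ has magnitude $\Theta(q^2q_1)$. If you take $f$ to be the plain indicator of $D(G,\Omega)$, the dominant negative eigenvalue does not come from the $X_i$ family: the characters $W_\ell$ of degree $r(q-1)/2$ take the values $+1$ on $A_1$ and $-1$ on $A_2$, and a direct computation from Lemma~\ref{e:sumchar} gives
\[
\lambda_{W_\ell,f}=\frac{q^2}{r}\bigl[(q_1+1)r-(q+1)r_1\bigr]\approx -q^2(q+1)\sqrt{q_1/q}\approx -q^{5/2}\sqrt{q_1},
\]
which is negative (since $q\geqslant q_1^3$ forces $(q+1)r_1>(q_1+1)r$) and far exceeds $q^2(q_1-1)$ in magnitude. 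With $d\approx q^5$ and this $\tau$, Hoffman yields only $|S|\lesssim q^{5/2}\sqrt{q_1}$, which is weaker than the claimed $2q^2q_1$ by a factor of roughly $\tfrac12\sqrt{q/q_1}\geqslant\tfrac12 q_1$. So your route still proves $\rho(G/\Omega)<\sqrt2/2$ (the weaker bound divided by $\sqrt{|G||G_\omega|}\approx q^{5/2}q_1^{5/2}$ is about $q_1^{-2}$), but it cannot deliver the explicit estimate $|S|<2q^2q_1$ in the statement, and the verification step you defer ("confirm which eigenvalue is extremal") would in fact fail for your $f$.

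The paper's fix is to restrict the support of $f$ to $(A_0\setminus B_0)^G$ alone (value $1$ there, $0$ elsewhere). Since $X$, $X_i$ are the only nontrivial characters not vanishing on $A_0$ among those contributing negatively, this choice forces $\lambda_{Y_j,f}=\lambda_{Z_k,f}=\lambda_{W_\ell,f}=0$, leaving $\tau=-q^2(q_1-1)$ and row sum $d=\tfrac12 q^2(q^2+1)(q-q_1)$, whence $|S|<\tfrac{2q^2(q-1)(q_1-1)}{q-q_1}<2q^2q_1$. You should adopt this restricted-support weighting (which is exactly the freedom that Lemma~\ref{lem:hoff} for compatible matrices, rather than adjacency matrices, is designed to exploit) to close the gap.
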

	\begin{proof}
		Let $B_0\leqslant A_0, B_1\leqslant A_1, B_2\leqslant A_2$ be cyclic subgroups of $G$ of order $t_0=q_1-1, t_1=q_1+\sqrt{2q_1}+1$ and $t_2=q_1-\sqrt{2q_1}+1$ respectively.
		Then 		
		\begin{equation*}
			D(G,\Omega)=(A_0\backslash B_0)^G \cup (A_1\backslash B_1)^G \cup (A_2\backslash B_2)^G.
		\end{equation*}
		We take the $(G,\Omega)$-compatible class function $f$ such that $f(g)=1$ for $g\in (A_0\backslash B_0)^G,$ and $f(g)=0$ for $g \notin (A_0 \backslash B_0)^G.$
		By Lemma~\ref{eign} and Lemma~\ref{lem:hoff}, the spectrum of $M^f$ is
		\begin{equation*}
			\begin{aligned}
				& \lambda_{\mathbf{1},f}=\frac{q^2(q-q_1)(q^2+1)}{2},\\
				& \lambda_{X,f}=\frac{(q-q_1)(q^2+1)}{2},\\
				&\lambda_{X_i,f}=\left\{\begin{array}{ll}
					0 &  \mbox{if $q_1-1$ does not divide $i$} \\
					-q^2(q_1-1) & \mbox{if $q_1-1$ divides $i$}
				\end{array} \right., \\ &\lambda_{Y_j,f}=\lambda_{Z_k,f}=\lambda_{W_\ell}=0,
			\end{aligned}
		\end{equation*}
		where $1\leqslant i\leqslant \frac{q^2}{2}, 1\leqslant j \leqslant \frac{q+r}{4},$ $1\leqslant k \leqslant \frac{q-r}{4},$ and $\ell \in \{1,2\}.$
		In particular, the row sum of $M^f$ is $d=\frac{q^2(q-q_1)(q^2+1)}{2}$ and the minimal eigenvalue of $M^f$ is $\tau=-q^2(q_1-1).$
	Thus, by Hoffman's bound (Lemma~\ref{lem:hoff}),
	\begin{equation*}
		|S|\leqslant \frac{|G|}{1-\frac{d}{\tau}}=\frac{q^2(q-1)(q^2+1)}{1+\frac{q^2(q-q_1)(q^2+1)}{2q^2(q_1-1)}}< \frac{q^2(q-1)(q^2+1)}{\frac{(q-q_1)(q^2+1)}{2(q_1-1)}}=\frac{2q^2(q-1)(q_1-1)}{q-q_1}<2q^2q_1,
	\end{equation*}
	and hence
	\begin{equation*}
		\begin{aligned}
			\frac{1}{\sqrt{2}}\rho(G/\Omega) &=\text{max}\{\frac{|S|}{|G_\omega|}/\sqrt{\frac{|\Omega|}{2}}\}=\text{max}\{|S|\sqrt{\frac{2}{|G||G_\omega|}}\}\\ &<\sqrt{\frac{2}{q^2q_1^2(q-1)(q_1-1)(q^2+1)(q_1^2+1)}}2q^2q_1\\
			&<\frac{2\sqrt{2}}{\sqrt{(q-1)(q_1-1)(q_1^2+1)}}<1.
		\end{aligned}
	\end{equation*}
	Therefore, $\rho(G/\Omega)<\frac{\sqrt{2}}{2}.$ This completes the proof.
	\end{proof}
	
	\subsection{$G_\o\leqslant\ZZ_{q\pm r+1}{:}\ZZ_4$}\

	\begin{lemma}\label{q+r+1}
		Assume $G_\omega$ is a subgroup of $\ZZ_{q+r+1}{:}\ZZ_4$.
		Then $|S|\leqslant 2q\frac{q^2+1}{q-1}$ for any intersecting subset $S\subseteq G$ and $\rho(G/\Omega)<\frac{\sqrt{2}}{2}$.
	\end{lemma}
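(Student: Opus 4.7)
The plan is to follow the now-standard machinery of Lemmas~\ref{lem:d2q-1}, \ref{subBorel}, and~\ref{szq1}: identify the derangement set $D(G,\Omega)$, construct an appropriate $(G,\Omega)$-compatible weighted class function $f$, compute the spectrum of $M^f$ via Lemma~\ref{eign} together with Lemma~\ref{e:sumchar}, and then invoke the Delsarte--Hoffman bound of Lemma~\ref{lem:hoff}.

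First I would show that $q-1$, $q+r+1$, $q-r+1$ are pairwise coprime: they are all odd since $q$ is even, $(q+r+1)(q-r+1)=q^2+1$, and $\gcd(q-1,q^2+1)=\gcd(q-1,2)=1$. In particular, no nontrivial element of $A_0$ or $A_2$ is $G$-conjugate into the maximal overgroup $\ZZ_{q+r+1}{:}\ZZ_4$ of $G_\omega$, so $(A_0\setminus 1)^G\cup(A_2\setminus 1)^G\subseteq D(G,\Omega)$. Let $B_1\leqslant A_1$ be (a conjugate of) $G_\omega\cap A_1$, of order $t_1$ dividing $q+r+1$; then $(A_1\setminus B_1)^G\subseteq D(G,\Omega)$, with equality up to the possible inclusion of the $2$-classes $\rho^G$, $(\rho^{-1})^G$, $(\rho^2)^G$, depending on which element orders $G_\omega$ contains.

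The main step is the choice of $f$. Parallel to the construction in Lemma~\ref{szq1}, I would take $f\equiv 1$ on $(A_2\setminus 1)^G$ and $0$ elsewhere; by Lemma~\ref{e:sumchar} this annihilates all the $X_i$- and $Y_j$-contributions, leaving eigenvalues arising only from $\mathbf{1}$, $X$, the $Z_k$, and the $W_\ell$. A direct application of Lemma~\ref{eign} then evaluates the row sum $d$ as roughly $\tfrac{1}{2}q^2(q-1)(q+r+1)$ and the minimum eigenvalue $\tau$, coming from the $Z_k$-block, as a negative quantity of order $q^3$. Routine manipulation shows $1-d/\tau$ grows like $(q-1)^2/2$, so Hoffman's bound yields
\[
|S| \;\leqslant\; \frac{|G|}{1-d/\tau} \;\leqslant\; \frac{2q(q^2+1)}{q-1},
\]
as claimed. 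If this simple weighting should prove insufficient, the pattern of the earlier lemmas suggests augmenting $f$ by a suitable scalar multiple of the indicator of $(A_0\setminus 1)^G$ to tune $\tau$ further.

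For the density statement, since $|G_\omega|\cdot|\Omega|=|G|=q^2(q-1)(q^2+1)$, the desired inequality $\rho(G/\Omega)<\sqrt{2}/2$ is equivalent, upon squaring, to $|G_\omega|(q-1)^3>8(q^2+1)$, which is automatic for $q\geqslant 8$ once $|G_\omega|\geqslant 2$; the degenerate case $|G_\omega|=1$ makes $G$ regular and forces $|S|=1$, so is trivial. The principal obstacle in executing this plan is calibrating the weights on $(A_2\setminus 1)^G$ (and possibly $(A_0\setminus 1)^G$) carefully enough that the dominant negative eigenvalue is close to $d$ in magnitude and Hoffman's bound really lands at $2q(q^2+1)/(q-1)$; this is the same sort of delicate arithmetic tuning already required in the $\ZZ_2^e.\ZZ_2^e{:}\ZZ_{q-1}$ case treated in Lemma~\ref{subBorel}.
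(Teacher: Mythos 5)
Your framework (identify $D(G,\Omega)$, pick a $(G,\Omega)$-compatible class function supported on a subset of it, apply Lemmas~\ref{eign}, \ref{e:sumchar} and \ref{lem:hoff}) is the right one, and your reduction of the density claim to $|G_\omega|(q-1)^3>8(q^2+1)$ is arithmetically fine once the bound $|S|\leqslant 2q\frac{q^2+1}{q-1}$ is in hand. The gap is in the central step: you weight the wrong family of classes, and with your $f$ the Hoffman bound does not come close to $2q\frac{q^2+1}{q-1}$. Taking $f$ to be the indicator of $(A_2\backslash 1)^G$, the characters that survive are not the $Z_k$ but $X$ and $W_\ell$ (since $X_i(\pi_2)=Y_j(\pi_2)=0$ while $X(\pi_2)=W_\ell(\pi_2)=-1$), and moreover $\lambda_{Z_k,f}$ is \emph{positive}: by Lemma~\ref{e:sumchar} the sum of $Z_k$ over $(A_2\backslash 1)^G$ equals $+q^2(q-1)(q+r+1)$, giving $\lambda_{Z_k,f}=+q^2$. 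The least eigenvalue instead comes from the two characters $W_\ell$ of small degree $\frac{r(q-1)}{2}$, namely $\lambda_{W_\ell,f}=-\frac{q^2(q^2-q-r)}{2r}\approx -\frac{q^{7/2}}{2\sqrt2}$, while the row sum is $d=\frac{q^2(q-1)(q^2-q-r)}{4}$. These give $1-\frac{d}{\tau}=1+\frac{r(q-1)}{2}$, so Hoffman only yields $|S|\lesssim \sqrt2\,q^{3/2}(q^2+1)$, which is of order $q^{7/2}$ rather than $q^2$ and is useless for $\rho(G/\Omega)<\frac{\sqrt2}{2}$ since $|G_\omega|\leqslant 4(q+r+1)$.

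The correct choice (the one the paper makes) is the opposite of yours: take $f$ to be the indicator of $(A_0\backslash 1)^G$, which also lies in $D(G,\Omega)$ because $q-1$ is coprime to $4(q+r+1)$. On $A_0$-elements one has $Y_j(\pi_0)=Z_k(\pi_0)=W_\ell(\pi_0)=0$, so the only negative eigenvalues are $\lambda_{X_i,f}=-q^2$ (and $\lambda_{X,f}>0$), while $d=\frac{q^2(q^2+1)(q-2)}{2}$; then $1-\frac{d}{\tau}=\frac{q}{2}(q-1)^2$ and Hoffman gives exactly $|S|\leqslant \frac{q^2(q-1)(q^2+1)}{\frac{q}{2}(q-1)^2}=2q\frac{q^2+1}{q-1}$. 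Your fallback remark about adding a multiple of the indicator of $(A_0\backslash 1)^G$ points in the right direction, but as stated the proposal's spectrum computation (minimum from the ``$Z_k$-block'', of order $q^3$) is incorrect in both location and sign, and the $W_\ell$ characters you ignore are precisely what breaks the argument.
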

	\begin{proof}
		Let $H$ be a maximal subgroup of $G$ isomorphic to $\ZZ_{q+r+1}{:}\ZZ_4$.
		Then 
		$$ D(G, [G:H])=(A_0\backslash 1)^G \cup (A_2\backslash 1)^G \subseteq D(G,\Omega),$$ where $D(G, [G:H])$ is the derangement set of the action of $G$ on $[G:H].$
		Let $f$ be a function from $G$ to $\mathbb{R}$ such that
		$f(g)=1$ for $g\in (A_0\backslash 1)^G$ and $f(g)=0$ for $g\notin (A_0\backslash 1)^G.$
		Then $f$ is a $(G,\Omega)$-compatible class function.
		By Lemma~\ref{eign} and Lemma~\ref{lem:hoff}, the spectrum of $M^f$ is
		\begin{equation*}
			\begin{aligned}
			&\lambda_{\mathbf{1},f}=\frac{q^5}{2}-q^4+\frac{q^3}{2}-q^2,\\ &\lambda_{X,f}=\frac{q^3}{2}-q^2+\frac{q}{2}-1, \\ &\lambda_{X_i,f}=-q^2, \\ &\lambda_{Y_j,f}=\lambda_{Z_k,f}=\lambda_{W_\ell,f}=0,
		\end{aligned}
	    \end{equation*}
		where $1\leqslant i\leqslant \frac{q^2}{2}, 1\leqslant j \leqslant \frac{q+r}{4},$ $1\leqslant k \leqslant \frac{q-r}{4},$ and $\ell \in \{1,2\}.$
		In particular, the row sum of $M^f$ is $d=\frac{q^5}{2}-q^4+\frac{q^3}{2}-q^2,$ and the minimal eigenvalue of $M^f$ is $\tau=-q^2.$
		Then by Hoffman's bound (Lemma~\ref{lem:hoff}), for any intersecting subset $S\subseteq G,$
		\begin{equation*}
			|S|\leqslant \frac{|G|}{1-\frac{d}{\tau}}=\frac{q^2(q-1)(q^2+1)}{\frac{q^3}{2}-q^2+\frac{q}{2}}=2q\frac{q^2+1}{q-1}.
		\end{equation*}
		If $G_\omega$ is a two-group, then $|S|\leqslant q^2$ for any intersecting subset $S\subseteq G$ by Lemma~\ref{subBorel} \rm(ii) and \rm(iii), and hence $\rho(G/\Omega)<\frac{\sqrt{2}}{2}$.
		We thus may assume $G_\omega$ is not a two-group and hence $|G_\omega|\geqslant 5.$
		Therefore,
		\begin{equation*}
			\begin{aligned}
				\frac{1}{\sqrt{2}}\rho(G/\Omega) &=\text{max}\{\frac{|S|}{|G_\omega|}/\sqrt{\frac{|\Omega|}{2}}\}=\text{max}\{|S|\sqrt{\frac{2}{|G||G_\omega|}}\}\\&<\sqrt{\frac{2}{5q^2(q-1)(q^2+1)}}\frac{2q(q^2+1)}{q-1}
				=\sqrt{\frac{8(q^2+1)}{5(q-1)^3}}<1,
			\end{aligned}
		\end{equation*}
		where the last inequality holds by $q\geqslant 8.$
		This completes the proof.
	\end{proof}

	\begin{lemma}\label{q-r+1}
		Assume $G_\omega$ is a subgroup of $\ZZ_{q-r+1}{:}\ZZ_4.$ Then $|S| \leqslant q^2$ for any intersecting subset $S\subseteq G,$ and $\rho(G/\Omega)<\frac{\sqrt{2}}{2}.$
		In particular, if $G_\omega$ contains an element of order $4,$ then Sylow $2$-subgroups of $G$ are maximum intersecting subsets.
	\end{lemma}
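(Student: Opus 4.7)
The plan is to mimic Lemma~\ref{q+r+1}, but with a sharper weighted class function. First I would reduce to $G_\omega=H=\N_G(A_2)\cong\ZZ_{q-r+1}{:}\ZZ_4$: since $G_\omega\leqslant H$, one has $D(G,[G:H])\subseteq D(G,\Omega)$, so any intersecting subset of $G$ on $\Omega$ is also intersecting on $[G:H]$. The group $H$ is a Frobenius group with kernel $A_2$ and complement $\ZZ_4$, hence each of its elements lies, up to $G$-conjugacy, in $\{1,\rho^2\}\cup\rho^G\cup(\rho^{-1})^G\cup(A_2\setminus 1)^G$, giving
\[D(G,[G:H])=(A_0\setminus 1)^G\cup(A_1\setminus 1)^G.\]

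The core step is to upgrade the single-support function of Lemma~\ref{q+r+1} (which would yield only $|S|\leqslant 2q(q^2+1)/(q-1)$) to the two-parameter $(G,\Omega)$-compatible class function
\[f(g)=\begin{cases}1,&g\in(A_0\setminus 1)^G,\\ b,&g\in(A_1\setminus 1)^G,\\ 0,&\text{otherwise},\end{cases}\qquad b=\frac{2(q^2+q+2)}{(q-r+1)(q+r)},\]
where $b$ is chosen so that $\lambda_{X,f}=-q^2$. Applying Lemma~\ref{eign} and Lemma~\ref{e:sumchar} with $t_0=t_1=1$ and the Suzuki character table, a direct computation gives
\[\lambda_{\mathbf{1},f}=q^2(q^3-q^2+q-2),\quad\lambda_{X,f}=\lambda_{X_i,f}=-q^2,\]
\[\lambda_{Y_j,f}=bq^2,\quad\lambda_{Z_k,f}=0,\quad\lambda_{W_\ell,f}=\frac{bq^2(q-r+1)(q+r)}{2r},\]
so the minimum eigenvalue is $\tau=-q^2$ and the row sum is $d=q^2(q^3-q^2+q-2)$.

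Lemma~\ref{lem:hoff} then yields
\[|S|\leqslant\frac{|G|}{1-d/\tau}=\frac{q^2(q-1)(q^2+1)}{(q-1)(q^2+1)}=q^2.\]
For the $\rho$-bound, using $|\Omega|=|G|/|G_\omega|$ and $|G_\omega|\geqslant 1$,
\[\rho(G/\Omega)^2\leqslant\frac{q^4}{|G_\omega|^2|\Omega|}=\frac{q^2}{|G_\omega|(q-1)(q^2+1)}<\frac{1}{2}\]
for all $q\geqslant 8$, so $\rho(G/\Omega)<\tfrac{\sqrt{2}}{2}$. For the \emph{in particular} statement, suppose $G_\omega$ contains an element of order $4$. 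By Lemma~\ref{lem:Sz-basic-pty}, every non-identity element of a Sylow $2$-subgroup $Q$ of $G$ has order $2$ or $4$, and is therefore $G$-conjugate to an element of $G_\omega$ (which contains both an involution and an order-$4$ element). Since $xy^{-1}\in Q$ for any $x,y\in Q$, the subgroup $Q$ is an intersecting subset of size $q^2$, meeting the upper bound.

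The main obstacle is locating the correct weight $b$. A naive choice produces a far more negative eigenvalue at one of $Y_j,Z_k,W_\ell$, of order $-q^4$, washing out the Hoffman bound. The key observation that makes the sharpening work is that the characters $Z_k$ vanish on both $A_0\setminus 1$ and $A_1\setminus 1$, so $\lambda_{Z_k,f}=0$ identically; this frees $b$ to be tuned so that $\lambda_{X,f}=\lambda_{X_i,f}=-q^2$ while $\lambda_{Y_j,f}$ and $\lambda_{W_\ell,f}$ remain non-negative, giving exactly the Hoffman denominator $(q-1)(q^2+1)$ needed to pin $|S|$ at $q^2$.
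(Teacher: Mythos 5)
Your proposal is correct and follows essentially the same route as the paper: the same reduction to $H\cong\ZZ_{q-r+1}{:}\ZZ_4$, the same weighted class function (your $b=\tfrac{2(q^2+q+2)}{(q-r+1)(q+r)}$ equals the paper's $\tfrac{2(q^2+q+2)}{q^2-q+r}$ since $r^2=2q$), the same spectrum, and the same Hoffman bound giving $|S|\leqslant q^2$. Your derivation of $\rho(G/\Omega)<\tfrac{\sqrt2}{2}$ is in fact slightly cleaner, since it works for all $|G_\omega|\geqslant1$ and avoids the paper's case split deferring $2$-group stabilizers to Lemma~\ref{subBorel}.
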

	
	\begin{proof}
		Let $H$ be a maximal subgroup of $G$ isomorphic to $\ZZ_{q-r+1}{:}\ZZ_4.$
		Then
		\begin{equation*}
			D(G, [G,H])=(A_0\backslash 1)^G \cup (A_1\backslash 1)^G \subseteq D(G,\Omega).
		\end{equation*}
		Let $f$ be a function from $G$ to $\mathbb{R}$ such that $f(g)=1$ for $g\in (A_0\backslash 1)^G,$ $f(g)=\frac{2(q^2+q+2)}{q^2-q+r}$ for $g\in (A_1\backslash 1)^G,$ and $f(g)=0$ for remaining elements in $G.$
		Then $f$ is a $(G,\Omega)$-compatible class function.
		By Lemmas~\ref{lem:hoff} and \ref{eign}, the spectrum of $M^f$ is
		\begin{equation*}
			\begin{aligned}
				&\lambda_{\mathbf{1},f}=q^5-q^4+q^3-2q^2, \\  &\lambda_{X,f}=\lambda_{X_i,f}=-q^2, \\
				&\lambda_{Y_j,f}=\frac{2q^4+2q^3+4q^2}{q^2-q+r}, \\ &\lambda_{Z_k,f}=0, \\
				&\lambda_{W_\ell,f}=\frac{1}{2}q^3r+\frac{1}{2}q^2r+qr, 
			\end{aligned}
		\end{equation*}
		where $1\leqslant i\leqslant \frac{q^2}{2}, 1\leqslant j \leqslant \frac{q+r}{4},$ $1\leqslant k \leqslant \frac{q-r}{4},$ and $\ell \in \{1,2\}.$
		In particular, the row sum of $M^f$ is $d=q^5-q^4+q^3-2q^2$ and the minimal eigenvalue of $M^f$ is $\tau=-q^2.$
		Thus, by Hoffman's bound (Lemma~\ref{lem:hoff}), for any intersecting subset $S\subseteq G,$
		\begin{equation*}
			|S|\leqslant \frac{|G|}{1-\frac{d}{\tau}}=\frac{q^2(q-1)(q^2+1)}{q^3-q^2+q-1}=q^2.
		\end{equation*}
		If $G_\omega$ is a two-group, then $|S|\leqslant q^2$ for any intersecting subset $S$, and $\rho(G/\Omega)<\frac{\sqrt{2}}{2}$ by Lemmas~\ref{subBorel} \rm(ii) and \rm(iii), as stated in the Lemma.
		Thus, we may assume $G_\omega$ is not a two-group, and hence $|G_\omega|\geqslant 5.$
		Therefore,
		\begin{equation*}
			\frac{1}{\sqrt{2}}\rho(G/\Omega)=\text{max}\{\frac{|S|}{|G_\omega|}/\sqrt{\frac{|\Omega|}{2}}\}=\text{max}\{|S|\sqrt{\frac{2}{|G||G_\omega|}}\}\leqslant \frac{\sqrt{2}q^2}{\sqrt{5q^2(q-1)(q^2+1)}}<1.
		\end{equation*}
	    
	    On the other hand, it is clear that if $G_\omega$ contains an element of order $4$, then a Sylow $2$-subgroup of $G$ is a maximum intersecting subset of $G.$
		
		This completes the proof.
	\end{proof}

	With the above series of lemmas, we are ready to prove Theorem~\ref{thm:Sz-EKR}.
	\subsection*{Proof of Theorem~\ref{thm:Sz-EKR}}
	Theorem~\ref{thm:Sz-EKR} holds by Lemmas~\ref{lem:d2q-1} to \ref{q-r+1}.
	
	\section{Various examples of large intersecting subsets}\label{sec:examples}

	We shall construct large intersecting subsets in various transitive permutation groups that are not quasiprimitive.

	\subsection{Examples from affine groups}\
	
	Some different permutation representations of solvable 2-transitive permutation groups give rise to some critical examples.
	
	\begin{construction}\label{nobo}
		{\rm
			For $q=p^{d}$ with $p$ prime, let $E=(\bb{F}_{q},+)$ and $F=(\bb{F}_{q^{2}},+)$ be the additive groups of the fields $\FF_q$ and $\FF_{q^2}$, and let $E^{\times}$ and $F^{\times}$ denote the multiplicative groups of $\FF_q$ and $\FF_{q^2}$, respectively.
			These define two affine groups
			\[\begin{array}{l}
				G=F{:}F^{\times}=\AGL(1,p^{2d})=\ZZ_p^{2d}{:}\ZZ_{p^{2d}-1},\\
				H=E{:}E^{\times}=\AGL(1,q)=\ZZ_p^d{:}\ZZ_{p^d-1}.
			\end{array}\]
			Viewing $H$ as a subgroup of $G$, let $\Omega=[G:H]$.
			Then $G$ is a transitive permutation group on $\Omega$.
			Let
			\[S=F{:}E^{\times}=\ZZ_p^{2d}{:}\ZZ_{p^d-1}.\]
		}
	\end{construction}

	\begin{lemma}
		Let $G$, $H$, $\Omega$, and $S$ be as defined in Construction~$\ref{nobo}$ with $p$ odd.
		Then $S$ is an intersecting subset, and
		\[\mbox{$\rho(G/\Ome)<1$, and $\lim\limits_{q\to \infty}\rho(G/\Ome)=1$.}\]
	\end{lemma}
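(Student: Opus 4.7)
The plan is to pin down $\rho(G/\Omega)$ exactly, showing $\rho(G/\Omega) = \sqrt{q/(q+1)}$, which yields both $\rho(G/\Omega) < 1$ and $\lim_{q\to\infty} \rho(G/\Omega) = 1$. The workhorse is the conjugation formula in the affine group $G$: writing $(b,c)$ for the affine map $x \mapsto cx + b$, a direct calculation gives
\[
(d,a)\,(b,c)\,(d,a)^{-1} \;=\; \bigl(ab + d(1-c),\ c\bigr).
\]
Consequently, for a non-identity element $(b,c)$, the $G$-conjugacy class is determined by $c$: if $c = 1$ it is the class of nontrivial translations, and if $c \neq 1$ it is precisely $\{(b',c) : b' \in F\}$, of size $q^2$.

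For the lower bound I would verify that $S = F{:}E^\times$ is an intersecting subset. Since $S$ is a subgroup, it suffices to show every element of $S$ is $G$-conjugate into $H$. For $(b,c) \in S$ with $c = 1$ and $b \neq 0$, conjugation by $(0, 1/b)$ sends $(b,1)$ to $(1,1) \in H$. For $c \in E^\times \setminus \{1\}$, we have $1 - c \in E^\times$, so $d \mapsto d(1-c)$ is a bijection of $F$, and one may choose $d \in F$ with $b + d(1-c) \in E$, producing a conjugate $(e,c) \in E{:}E^\times = H$. Since $|S| = q^2(q-1)$, $|H| = q(q-1)$ and $|\Omega| = q(q+1)$, this gives
\[
\rho(G/\Omega) \;\geq\; \frac{|S|}{|H|\sqrt{|\Omega|}} \;=\; \sqrt{\frac{q}{q+1}}.
\]

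For the matching upper bound I would exhibit a semiregular subset of size $q+1$ and apply Proposition~\ref{thm:semireg}(2). Let $g$ generate the cyclic group $F^\times$ of order $q^2 - 1$, so that $E^\times = \langle g^{q+1}\rangle$, and set $R_0 = \{(0, g^i) : 0 \leq i \leq q\} \subset F^\times \leq G$, a transversal of $E^\times$ in $F^\times$. For distinct $i, j \in \{0,\ldots,q\}$ the ratio is $(0, g^{i-j})$ with $|i-j| \in \{1, \ldots, q\}$; since $q+1 \nmid (i-j)$, we have $g^{i-j} \notin E^\times$. By the conjugation formula, $(0, g^{i-j})$ is $G$-conjugate only to elements $(b, g^{i-j})$ with $b \in F$, and none of these lies in $H$. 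Hence $R_0$ is semiregular, and Proposition~\ref{thm:semireg}(2) yields
\[
\rho(G/\Omega) \;\leq\; \frac{\sqrt{|\Omega|}}{|R_0|} \;=\; \sqrt{\frac{q}{q+1}},
\]
matching the lower bound.

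The only substantive step is the conjugation calculation that decides when an element of $G$ is $G$-conjugate into $H$; once that is in hand, the proof reduces to combinatorics organized around the condition $c \in E^\times$, and the limiting behavior $\sqrt{q/(q+1)} \to 1$ is immediate.
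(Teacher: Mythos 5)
Your proposal is correct, and it takes a genuinely different and in fact more complete route than the paper. For the intersecting property the paper argues structurally: $G$ and $S$ are Frobenius groups, order-$p$ elements of $S$ are conjugate into $H$ because $F^\times$ acts transitively on $F\setminus\{0\}$, and $p'$-elements are conjugate into $H$ by Hall's theorem; you instead compute the conjugacy classes of $\AGL(1,q^2)$ explicitly from the formula $(d,a)(b,c)(d,a)^{-1}=(ab+d(1-c),c)$, which is more elementary and gives the same conclusion. The more significant difference is your upper bound: since $\rho(G/\Omega)$ is defined via a \emph{maximum} intersecting subset, the paper's final computation $\rho(G/\Omega)=|S|/(|H|\sqrt{|\Omega|})$ tacitly assumes $S$ is maximum, which it never verifies; you close this by exhibiting the transversal $R_0$ of $E^\times$ in $F^\times$ as a semiregular subset of size $q+1$ and invoking Proposition~\ref{thm:semireg}(2), which pins down $\rho(G/\Omega)=\sqrt{q/(q+1)}$ exactly and yields both $\rho(G/\Omega)<1$ and the limit $1$. (Note that, like the paper's Frobenius/Hall argument, your argument nowhere needs the hypothesis that $p$ is odd, so that assumption is harmless but unused.) Your version is the one I would keep.
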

	\begin{proof}
		Observe that $G$ is a Frobenius group with the Frobenius kernel $F$ and a Frobenius complement $F^\times$.
		It follows that $S$ is a Frobenius group too.
		Let $x$ be a non-identity element of $S$.
		Then either $|x|=p$, or $|x|$ divides $p^d-1$.
		If $|x|=p$, then $x$ is conjugate to an element of $H$ since $F^\times$ acts transitively on the non-identity elements of $F$ in $G=F{:}F^\times$ by conjugation.
		If $|x|$ divides $p^d-1$, then $x$ is conjugate to an element of $H$ by Hall's theorem.
		Therefore, $S$ is an intersecting subset.
		
		Finally, calculations show that
		\[\begin{array}{l}
			|\Ome|=\frac{|G|}{|H|}=\frac{q^{2}(q^{2}-1)}{q(q-1)}=q(q+1),\\
			\frac{|S|}{|H|}=\frac{q^{2}(q-1)}{q(q-1)}=q= \sqrt{q^{2}}<\sqrt{q(q+1)}=\sqrt{|\Omega|}.
		\end{array}\]
		Thus $\rho(G/\Ome)=\dfrac{|S|}{\sqrt{|\Omega|}|H|}={\sqrt{q^{2}}\over\sqrt{q(q+1)}}=\sqrt{{q\over q+1}}<1$, and further,
		\[\lim\limits_{q\to \infty}\rho(G/\Ome)=\sqrt{{q\over q+1}}=1.\]
		This proves the lemma.
	\end{proof}

	\subsection{Some exceptional examples}
	
	We next present a few exceptional examples, which show that the inequality $\rho(G/\Ome)<1$ is not always true.
	
	\begin{table}[H]
		\[\begin{array}{|lllll|}\hline
			G&H&S&|\Ome|&\rho(G/\Ome) \\ \hline
			5^2{:}\SL(2,3) & 5{:}4 & 5^2{:}\Q_8 &30& \sqrt{10\over3} \\
			5^2{:}(\SL(2,3).2)&5{:}(4{:}2)&5^2{:}(\Q_8.2)&30&\sqrt{10\over3} \\
			29^2{:}(\SL(2,5)\times7)&29{:}28&29^2{:}(\Q_8\times7)&29.30&\sqrt{58\over15} \\
			29^2{:}(\SL(2,5)\circ28)&29{:}(4\circ28)&29^2{:}(\Q_8\circ28)&29.30&\sqrt{58\over15} \\
			3^3{:}\A_4&3^2{:}2&3^3{:}2^2&18&\sqrt2\\ \hline
		\end{array}\]
		\caption{Groups with large intersecting subsets.}
		\label{larget}
	\end{table}
	
	We remark that the examples in rows~1, 2 and 5 were first obtained in \cite{MRS}.
	
	\begin{lemma}\label{base-groups}
		Let $G,H,S$ be as in Table~$\ref{larget}$, and let $\Omega=[G:H]$.
		Then $G$ is a transitive permutation group on $\Omega$, and $S$ is an intersecting subset such that
		\[\sqrt2\le\rho(G/\Ome)<2.\]
	\end{lemma}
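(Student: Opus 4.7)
The plan is to handle the five rows of Table~\ref{larget} individually. For each row there will be three tasks: (i)~verify that $|S|/(|H|\sqrt{|\Omega|})$ matches the value of $\rho(G/\Omega)$ listed in the table and lies in $[\sqrt{2},2)$; (ii)~show $S\le G$ is an intersecting subset; and (iii)~establish a matching upper bound so that $\rho(G/\Omega)$ equals this value.

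Task~(i) is routine: for instance in row~5 one has $|G|=27\cdot12=324$, $|H|=18$, so $|\Omega|=18$ and $|S|=108$, giving $|S|/(|H|\sqrt{|\Omega|})=6/\sqrt{18}=\sqrt{2}$; the other rows give $\sqrt{10/3}$ or $\sqrt{58/15}$, all within $[\sqrt{2},2)$.

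The substance will lie in task~(ii). In each row $G$ is affine of the form $V{:}L$ with $V$ elementary abelian and $L\le\GL(V)$, $H$ has the form $U{:}L_0$ with $U\le V$ and $L_0\le L$, and $S=V{:}L_1$ for a suitable $L_1\le L$. The key conjugation identity is $(v\sigma)^{t_w}=(v-(I-\sigma)w)\sigma$ for $w\in V$, so the $V$-conjugacy class of $v\sigma$ is $(v+\mathrm{Im}(I-\sigma))\sigma$. For $\sigma\in L_1$ that is fixed-point-free on $V$, $I-\sigma$ is invertible and $v\sigma$ is $V$-conjugate to $\sigma$ itself; in rows~1--2 every non-identity element of $L_1\cong\Q_8$ or $\Q_8.2$ is fixed-point-free on $\FF_5^2$, so the proof reduces to showing each $L_1$-conjugacy class meets $L_0$ and each nonzero vector is $L$-conjugate to a point of $U$, both of which are immediate since $L$ is transitive on $V\setminus\{0\}$. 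In rows~3--4 the scalar factor $\ZZ_7$ or $\ZZ_{28}$ introduces certain $\sigma$ with nontrivial fixed subspace, for which one instead uses that $\mathrm{Im}(I-\sigma)$ is a proper $L_0$-stable subspace meeting $U$ suitably, and conjugates accordingly. Row~5 is handled by enumerating the four $\A_4$-orbits on $\FF_3^3\setminus\{0\}$ and checking directly that each element of $S$ can be moved into $H$ by a combination of translation- and $\A_4$-conjugations.

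Task~(iii) will be the principal obstacle. The natural approach is the Delsarte--Hoffman bound (Lemma~\ref{lem:hoff}) applied to a $(G,\Omega)$-compatible weighted adjacency matrix $M^f$ of the derangement graph $\Gamma_{G,\Omega}$, whose eigenvalues are obtained from the character table of $G$ via Lemma~\ref{lem:specm}; one searches for an $f$ whose bound matches $|S|$. Since all five groups have order at most about $1.4\times10^6$, this can alternatively be carried out by direct computer enumeration in GAP or MAGMA, as was done in \cite{MRS} for rows~1, 2, and~5. Once the matching upper bound is secured in each case, the tabulated value of $\rho(G/\Omega)$ is confirmed, and in particular lies strictly below $2$.
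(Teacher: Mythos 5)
Your core strategy for the intersecting property is the same as the paper's: each $S$ in Table~\ref{larget} is a subgroup, so it suffices to show every element of $S$ is $G$-conjugate to an element of $H$, and your task~(i) is exactly the paper's closing computation. The paper carries out the conjugacy argument only for row~4, handling the translation subgroup via the $2$-transitivity of $G$ on $29^2$ points and handling a Hall $29'$-subgroup of $S$ via the conjugacy of order-$4$ elements of $\SL(2,5)$; your affine bookkeeping $(v\sigma)^{t_w}=(v-(I-\sigma)w)\sigma$ is an equivalent mechanism. Be aware that the genuinely delicate sub-case, in both treatments, is an element $v\sigma$ with $\sigma\neq 1$ having eigenvalue $1$ and $v\notin\mathrm{Im}(I-\sigma)$ (for instance elements of order $58$ in row~4, which exist because $4\circ 28$ contains involutions with spectrum $\{1,-1\}$): such an element lies neither in the translation subgroup nor in any Hall $29'$-subgroup, so it is covered by neither of the two cases the paper treats. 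You at least flag this situation for rows 3--4, but your proposed resolution (``$\mathrm{Im}(I-\sigma)$ is a proper $L_0$-stable subspace meeting $U$ suitably'') is a gesture rather than an argument, and it is precisely the point where real work is needed.

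The substantive divergence is your task~(iii). You are right that the upper bound $\rho(G/\Omega)<2$ --- equivalently, the exact values tabulated --- does not follow from exhibiting $S$ and requires bounding \emph{all} intersecting subsets; however, you leave this as a proposal (find a weight function $f$ for Lemma~\ref{lem:hoff}, or enumerate by machine) rather than a proof, so as written your argument establishes only $\sqrt2\le\rho(G/\Omega)$. For comparison, the paper's own proof does not address the upper bound at all: it verifies $|S|/|H|>\sqrt{|\Omega|}$ and asserts that ``the parameters are as stated,'' with rows 1, 2 and 5 resting on the computations of \cite{MRS} and rows 3 and 4 unverified. So your proposal is no less complete than the paper here, but neither text actually proves $\rho(G/\Omega)<2$; if you pursue it, the Hoffman route needs an explicit $f$ for each group, while direct enumeration is feasible at these orders.
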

	\begin{proof}
		Here we only treat the group in row~4 since similar arguments work for the other candidates.
		Let
		\[\begin{array}{l}
			G= {29}^{2}{:}(\SL(2,5)\circ {28})<\AGL(2,29),\\
			H={29}{:}(4\circ{28})=29{:}(2\times28),\\
			S={29}^{2}{:}(\Q_{8}\circ{28}).
		\end{array}\]
		We need to prove that each element of $S$ is conjugate to an element of $H$.
		
		First, it is well-known that $G$ is a 2-transitive permutation group of degree $29^2$, and
		hence all elements of $G$ of order 29 are conjugate.
		
		Let $H_{29'}=\ZZ_4\circ\ZZ_{28}$ be a Hall $29'$-subgroup of $H$, and
		let $S_{29'}=\Q_8\circ\ZZ_{28}$ be a Hall $29'$-subgroup of $S$ such that $H_{29'}<S_{29'}$.
		Write
		\[\begin{array}{l}
			H_{29'}=\l x\r\circ\l c\r=\ZZ_4\circ\ZZ_{28},\\
			S_{29'}=\l x,y\r\circ\l c\r=\Q_8\circ\ZZ_{28}.
		\end{array}\]
		
		The polynomial $t^{2}+1 \in \bb{F}_{5}[t]$ is the minimal polynomial of any order $4$ element of $\SL(2,5)$. Thus all elements of order $4$ in $\SL(2,5)$ are conjugate.
		It follows that all elements of $\l x,y\r$ of order 4 are conjugate in $G$ to $x$.
		Let $g\in S_{29'}$.
		If $g\in\l c\r$, then $g\in H_{29'}$.
		Assume that $g\notin\l c\r$.
		Then $g=zc^i$ where $z\in\l x,y\r$ is of order 4, and $g$ is conjugate to $xc^i\in H$.
		Therefore, each element of $S$ is conjugate to an element of $H$.
		So $S$ is an intersecting subset.
		
		Finally, by definition, we have $|\Omega|=|G|/|H|=29\times 30$, and thus
		$$|S|/|H|=29\times 2 > \sqrt{29\times 30}=\sqrt{|\Omega|}.$$
		It then follows that the parameters are as stated in row 4 of Table~\ref{larget}.
	\end{proof}

	\subsection{Examples from special $p$-groups}\
	
	We present examples of intersecting subsets with large order based on some special $p$-groups.
	The first example is from the so-called {\em Suzuki $2$-groups}.
	
	\begin{construction}\label{nobo_Suz}
		{\rm
			Let $T=\Sz(q)$ with $q=2^e$, a Suzuki simple group, and let $Q$ be a Sylow $2$-subgroup of $T$.
			Let
			\[\begin{array}{rcl}
				G&=&\N_T(Q)=Q{:}\l g \r \cong (\ZZ_2^e.\ZZ_2^e){:}\ZZ_{q-1},\\
				H&\cong&\ZZ_4, \ \mbox{a cyclic subgroup of $G$ of order 4,}\\
				\Omega&=&[G:H], \ \mbox{of size ${1\over4}q^2(q-1)$.}\\
			\end{array}\]
		}
	\end{construction}
	
	\begin{lemma}\label{lem:nobo_Suz}
		Let $G,H,\Omega$ and $Q$ be as defined in Construction $\ref{nobo_Suz}$.
		Then $Q$ is an intersecting subset of $G$, and $\rho(G/\Omega)={q\over2\sqrt{q-1}}$.
	\end{lemma}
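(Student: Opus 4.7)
The plan is to show that the subgroup $Q$ itself realises the maximum intersecting subset, of size $q^{2}$, by pairing the observation that $Q$ is intersecting with a matching upper bound coming from a semiregular complement. First I would verify that $Q$ is intersecting; since $Q$ is a subgroup, for $u,v\in Q$ we have $uv^{-1}\in Q$, so it suffices to show that every nonidentity element of $Q$ is $G$-conjugate to an element of $H$. By Lemma~\ref{lem:Sz-basic-pty} each nonidentity element of $Q$ has order $2$ or $4$. For the $q-1$ involutions, all lie in $Z(Q)$ and the complement $K\cong\FF_{q}^{\times}$ acts on $Z(Q)$ by $(0,\beta)^{\kappa}=(0,\beta\kappa^{r+1})$ with $r=2^{(e+1)/2}$; since $(r+1)(r-1)=2q-1\equiv 1\pmod{q-1}$ we get $\gcd(r+1,q-1)=1$, so this action is transitive on $Z(Q)\setminus\{1\}$ and every involution of $Q$ is $G$-conjugate to $x^{2}\in H$. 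For the order-$4$ elements, the character table of $\Sz(q)$ (Table~\ref{tab:Szcha}) exhibits exactly two $T$-classes, with representatives $\rho$ and $\rho^{-1}$; the TI property of $Q$ in $T$ (Lemma~\ref{lem:Sz-basic-pty}) together with a simple orbit-count shows that each of these $T$-classes meets $Q$ in a single $G$-orbit. The elements $x,x^{-1}\in H$ lie in distinct $G$-classes, since inversion swaps the two $T$-classes of order-$4$ elements, so $H$ meets both $G$-orbits on order-$4$ elements of $Q$.

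For the matching upper bound I would take $R=K\leqslant G$, the cyclic complement of order $q-1$. Since $q-1$ is odd and $|H|=4$, we have $\gcd(|R|,|H|)=1$, so any nonidentity element of $R$ has order coprime to $|H|$ and cannot be conjugated into $H$; hence $R$ is semiregular on $\Omega$. Proposition~\ref{thm:semireg}(1) then yields $|S|\leqslant |G|/|R|=q^{2}$ for every intersecting subset $S$. Combining with the first paragraph, $Q$ is a maximum intersecting subset, and
\[
\rho(G/\Omega)\;=\;\frac{|Q|}{|H|\sqrt{|\Omega|}}\;=\;\frac{q^{2}}{4\sqrt{q^{2}(q-1)/4}}\;=\;\frac{q}{2\sqrt{q-1}}.
\]

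The main piece of work is the conjugacy-class bookkeeping in the first paragraph, particularly the verification that $x$ and $x^{-1}$ represent distinct $G$-classes (so that $H$ captures both $G$-orbits of order-$4$ elements in $Q$) and the transitivity of $K$ on $Z(Q)\setminus\{1\}$. Once that is in hand, the remainder is routine arithmetic together with a clean application of Proposition~\ref{thm:semireg} to the semiregular complement~$K$.
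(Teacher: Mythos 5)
Your proof is correct, and its core is the same as the paper's: show that every element of $Q$ is $G$-conjugate into $H=G_\omega$, so that the subgroup $Q$ is an intersecting subset, then compute the ratio. The paper's own proof is essentially a one-line assertion of that conjugacy fact followed by the arithmetic; you supply the two things it leaves unsaid. First, the conjugacy bookkeeping: your use of $(r+1)(r-1)=2q-1\equiv 1\pmod{q-1}$ to get transitivity of $K$ on $Z(Q)\setminus\{1\}$, and the TI-property of $Q$ to identify the $T$-classes $\rho^T\cap Q$, $(\rho^{-1})^T\cap Q$ with single $G$-classes hit by $x$ and $x^{-1}$, is exactly the right justification (in fact the TI argument alone gives the fusion claim, with no orbit count needed). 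Second, and more substantively, you prove that $Q$ is a \emph{maximum} intersecting subset by exhibiting the semiregular complement $R=K$ of order $q-1$ and invoking Proposition~\ref{thm:semireg} to get $|S|\le |G|/|R|=q^2$. Since $\rho(G/\Omega)$ is defined via a maximum intersecting subset, this upper bound is actually needed for the stated equality $\rho(G/\Omega)=q/(2\sqrt{q-1})$; the paper's proof as written only establishes the lower bound (which is all that Theorem~\ref{no-bound} requires, but is less than the lemma claims). So your argument is a strictly more complete version of the paper's, at the cost of the extra conjugacy verification and one application of the semiregular-subset bound.
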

	\begin{proof}
		Note that each element of $Q$ is conjugate to an element of $G_\o=H\cong\ZZ_4$ in $G$, and hence $Q$ is an intersecting subset of $G$.
		Since $|Q|=q^2$, we have that
		$$\rho(G/\Omega)={|Q|\over|H|\sqrt{|\Omega|}}={q^2\over4\sqrt{{1\over4}q^2(q-1)}}
		={q^2\over2q\sqrt{q-1}}={q\over2\sqrt{q-1}}.
		$$
		This completes the proof.
	\end{proof}

	The next example is based on Sylow $p$-subgroups of simple unitary groups.
	
	\begin{construction}\label{nobo_PSU}
		{\rm
			Let $T=\PSU_3(q)$, where $q=p^f$ is odd, and $\gcd(3,q+1)=1$, and let $Q\cong \ZZ_p^f{:}\ZZ_p^{2f}$ be a Sylow $p$-subgroup of $T$.
			Let $x,y$ be two non-commuting element in $Q,$ and let
			\[\begin{array}{rcl}
				G&=&\N_T(Q)\cong (\ZZ_p^f.\ZZ_p^{2f}){:}\ZZ_{q^2-1},\\
				H&=& \l x, y\r \cong\ZZ_p.\ZZ_p^2,\\
				\Omega&=&[G:H],\ \mbox{of size ${q^3(q^2-1)\over p^3}$}.
			\end{array}\]
		}
	\end{construction}

	\begin{lemma}\label{lem:nobo_PSU}
		Let $G, H, \Omega$ and $Q$ be as defined in Construction~$\ref{nobo_PSU}$.
		Then $Q$ is an intersecting subset of $G$, and
		$\rho(G/\Omega)=\frac{q^3}{\sqrt{p^3q^3(q^2-1)}}.$
	\end{lemma}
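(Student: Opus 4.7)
The plan mirrors Lemma~\ref{lem:nobo_Suz}: identify the $G$-orbits on the Sylow $p$-subgroup $Q$ (with $G=Q{:}C$ and $C\cong\ZZ_{q^2-1}$), verify that the point stabiliser $H=G_\omega$ meets every non-identity orbit (so $Q$ is intersecting), and then certify that $Q$ is a maximum intersecting subset by exhibiting a semiregular complement.

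First I would record the standard structure: $Q$ is the unipotent radical of a Borel of $\PSU_3(q)$, a non-abelian $p$-group of exponent $p$ (since $q$ is odd), with $Z(Q)\cong\FF_q^+$ and $Q/Z(Q)\cong\FF_{q^2}^+$; moreover, using $\gcd(3,q+1)=1$, the torus $C$ acts on $Q/Z(Q)$ as the full $\FF_{q^2}^\times$ (so transitively on the $q^2-1$ non-trivial cosets of $Z(Q)$) and on $Z(Q)$ via the norm map $\FF_{q^2}^\times\to\FF_q^\times$ (so transitively on $Z(Q)\setminus\{1\}$).

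For each $u\in Q\setminus Z(Q)$, the map $v\mapsto[u,v]$ is a surjective homomorphism $Q\to Z(Q)$ (the nilpotency class of $Q$ is two, and surjectivity is a short explicit commutator computation using the Heisenberg-like multiplication on $Q$); consequently the $Q$-conjugacy class of $u$ is the entire coset $uZ(Q)$, and combined with the $C$-transitivity on non-trivial cosets this makes $Q\setminus Z(Q)$ a single $G$-orbit. Hence $G$ partitions $Q$ into three orbits, $\{1\}$, $Z(Q)\setminus\{1\}$ and $Q\setminus Z(Q)$, and both non-trivial orbits meet $H=\langle x,y\rangle$: the first at the central commutator $[x,y]$ and the second at the non-central element $x$. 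Therefore every element of $Q$ is $G$-conjugate to an element of $H$, and $Q$ is an intersecting subset.

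For maximality and the evaluation of $\rho$, observe that $C$ is a cyclic $p'$-subgroup and $H$ is a $p$-group, so no non-identity element of $C$ can be $G$-conjugate to any element of $H$; hence $C$ is semiregular on $\Omega=[G:H]$. Since $|C||Q|=(q^2-1)q^3=|G|$, Proposition~\ref{thm:semireg}(1) forces every intersecting subset $S$ to satisfy $|S|\le q^3=|Q|$, so $Q$ is maximum, and Definition~\ref{ratio} with $|S|=q^3$, $|G_\omega|=p^3$ and $|\Omega|=q^3(q^2-1)/p^3$ yields $\rho(G/\Omega)=q^3/\sqrt{p^3q^3(q^2-1)}$. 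The main technical step is the orbit analysis of $C$ on $Q$ in the second paragraph, which rests on the explicit torus action arising from the Hermitian form on $\PSU_3(q)$; the commutator surjectivity, the semiregularity of $C$, and the arithmetic are routine.
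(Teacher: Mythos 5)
Your proof is correct and, for the heart of the lemma (that $Q$ is intersecting), follows essentially the same route as the paper: both arguments show that the non-identity elements of $Q$ fall into exactly two $G$-classes, $Z(Q)\setminus\{1\}$ and $Q\setminus Z(Q)$, and that $H=\langle x,y\rangle$ meets both (via $[x,y]$ and $x$). The paper gets the class sizes by computing $|\C_G(u)|=q^2$ for $u\in Q\setminus Z(Q)$ from the explicit matrices $M(a,b)$, whereas you get the same conclusion from surjectivity of the commutator map $v\mapsto[u,v]$ onto $Z(Q)$ together with the torus acting transitively on the nontrivial cosets of $Z(Q)$ (which is where $\gcd(3,q+1)=1$ enters); these are equivalent computations. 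The one genuine difference is your final paragraph: you certify that $Q$ is a \emph{maximum} intersecting subset by observing that the cyclic $p'$-group $C\cong\ZZ_{q^2-1}$ is semiregular on $[G:H]$ (as $\gcd(|C|,|H|)=1$) and $|C|\,|Q|=|G|$, so Proposition~\ref{thm:semireg}(1) gives $|S|\le q^3$ for every intersecting subset. The paper's proof omits this step and passes directly from ``$Q$ is intersecting'' to the stated value of $\rho(G/\Omega)$, which by Definition~\ref{ratio} is defined via a maximum intersecting subset; strictly speaking the paper only establishes the lower bound, so your extra step is not redundant but actually closes a small gap in the published argument (for Theorem~\ref{no-bound} only the lower bound is needed, but the lemma asserts equality).
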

	\begin{proof}
		Let $V=\mathrm{span}\{u,v,w\}$ be a unitary space, where $(u,v)$ is a hyperbolic pair and
		$w\in \mathrm{span}\{u,v\}^{\perp}$.
		Assume that $T=\PSU(V)=\PGU_3(q)$, consisting of unitary linear transformations over $V$.
		Let $G=T_{\mathrm{span}\{u\}}$, the stabilizer of the subspace span$\{u\}$.
		For convenience, we denote the matrix
		\begin{equation*}
			\begin{pmatrix}
				1 & a & -b^q\\
				0 & 1 & 0 \\
				0 & b & 1
			\end{pmatrix}
		\end{equation*}
		as $M(a,b).$ Then
		\[ Q= \{M(a,b) \mid  a,b\in \mathbb{F}_{q^2},a+a^q+bb^q=0 \}.\]
		Moreover, $G=\N_T(Q)=Q{:}\l g \r$, where $g=\mathrm{diag}(\lambda, \lambda^{-q}, \lambda^{q-1}),$
		with $\lambda$ being a generator of the multiplicative group of $\mathbb{F}_{q^2}$.
		
		One may easily verify the following two matrix equations
		\begin{equation}\label{e:PSU+}
			M(a,b)M(a',b')=M(a+a'-b^qb',b+b'),
		\end{equation}
		\begin{equation}\label{e:PSUc}
			M(a,b)^{\diag(\lambda, \lambda^{-q}, \lambda^{q-1})}=M(\lambda^{-(q+1)}a,\lambda^{1-2q}b).
		\end{equation}
		Then (\ref{e:PSU+} ) implies $$\C_Q(M(a,b))=\{M(a',b')\mid b^qb'=(b')^qb,a'+(a')^q+b'(b')^q=0\}.$$ Note that $b^qb'=(b')^qb=(b^qb')^q$ is equivalent to $b^qb'\in \FF_q.$ Thus, $M(a,0)\leqslant \Z(Q)$ for any $a\in\FF_{q^2}$ with $a+a^q=0,$ and $|\C_Q(M(a',b'))|=q^2$ for any $a',b'\in \FF_{q^2}^{\times}$ with $M(a',b')\in Q.$ This implies
		\begin{equation}\label{e:PSUZ}
			\Z(Q)=\{M(a,0)\mid a\in \FF_q^2, a+a^q=0\}\cong \ZZ_p^f,~\mbox{and}~ Q/\Z(Q)\cong \FF_q^{+} \cong \ZZ_p^{2f}.
		\end{equation}
		Further, two elements of $\Z(Q)$ of order $p$ are conjugate by an element in $\l g \r$ by  (\ref{e:PSUc}).
		
		Take an element $x\in Q\backslash \Z(Q)$.
		Then $x=M(a,b)$ with $b\in \FF_{q^2}^{\times}$ and $a+a^q+bb^q=0.$
		Note that $\l g \r$ acts regularly by conjugation on $Q/\Z(Q)$ by (\ref{e:PSUc}) and (\ref{e:PSUZ}).
		Thus $\C_G(x)\leqslant Q$, and $|\C_G(x)|=q^2$ .
		It follows that
		$$|x^G|=|G|/|\C_G(x)|=\frac{q^3(q^2-1)}{q^2}=q(q^2-1),$$
		and hence $x^G=Q\backslash \Z(Q),$ refer to \cite{Dornhoff}.
		That is to say, non-identity elements of $Q$ form two conjugacy classes of $G$, one of which is $\Z(Q)\setminus\{1\}$, and the other is $\Q\setminus\Z(Q)$.
		
		By definition, $H$ is non-abelian. Further, $Q/\Z(Q)$ is abelian implies $[x,y]\in \Z(Q)$ and $H\cong \ZZ_p.\ZZ_p^2$.
		Then $\Z(H)=H\cap\Z(Q)\cong \ZZ_p$, and $H\cap(Q\setminus\Z(Q))\not=\emptyset$.
		Thus each element in $\Z(Q)\setminus\{1\}$ is conjugate to some element of $\Z(H)$, and each element of $Q\setminus\Z(Q)$ is conjugate to some element of $H\setminus \Z(H)$.
		Therefore, $Q$ is an intersecting subset of $G$.
		Then
		$$\rho(G/\Omega)=\frac{|Q|}{|H|\sqrt{|\Omega|}}=\frac{|Q|}{\sqrt{|G||H|}} =\frac{q^3}{\sqrt{p^3q^3(q^2-1)}},$$
		completing the proof.
	\end{proof}

	\subsection*{ Proof of Theorem~\ref{no-bound}}\
	
	For any positive number $M$. There exists $e_0$ such that whenever $q=2^e\geqslant 2^{e_0}$ we have that ${q\over2\sqrt{q-1}}>M.$
	There also exists a prime $p$ with $\gcd(3,p+1)=1$ and an integer $f_0>3$ such that whenever $q=p^f\geqslant p^{f_0}$ we have $q+1$ is coprime to $3$ and $\frac{q^3}{\sqrt{p^3q^3(q^2-1)}}>M.$
	
	Then the proof of Theorem~\ref{no-bound} follows from Construction~\ref{nobo_Suz} , Lemma~\ref{lem:nobo_Suz}, Construction~\ref{nobo_PSU} and Lemma~\ref{nobo_PSU}.
	\qed

	\bibliographystyle{plain}


\end{document}